\documentclass{amsart}  	
\usepackage{geometry}                		
\usepackage{graphicx}				
\usepackage{tikz}
\usetikzlibrary{calc}

\usepackage{amssymb}
\usepackage{stmaryrd}
\usepackage{amsthm}

\usepackage{mathtools}
\usepackage{amsfonts}
\usepackage{epsfig}
\usepackage{comment}
\usepackage{color}
\usepackage{amsmath}
\usepackage{dsfont}

\newcommand{\R}{{\mathbb R}}

\newcommand{\N}{{\mathbb N}}

\newcommand{\bM}{{\mathbb M}}

\newcommand{\cD}{{\mathcal D}}

\usepackage{float}

\usepackage{import}
\usepackage{xifthen}
\usepackage{pdfpages}
\usepackage{transparent}

\usepackage{xparse}

\DeclareMathOperator{\diam}{diam}

\DeclareMathOperator{\dist}{dist}

\def\vint_#1{\mathchoice
	{\mathop{\vrule width 6pt height 3 pt depth -2.5pt
			\kern -8pt \intop}\nolimits_{#1}}%
	{\mathop{\vrule width 5pt height 3 pt depth -2.6pt
			\kern -6pt \intop}\nolimits_{#1}}%
	{\mathop{\vrule width 5pt height 3 pt depth -2.6pt
			\kern -6pt \intop}\nolimits_{#1}}%
	{\mathop{\vrule width 5pt height 3 pt depth -2.6pt
			\kern -6pt \intop}\nolimits_{#1}}}

\theoremstyle{plain}
\newtheorem{theorem}{Theorem}[section]

\newtheorem{lemma}[theorem]{Lemma}

\theoremstyle{definition}
\newtheorem{definition}[theorem]{Definition}

\newtheorem{remark}[theorem]{Remark}

\author{Sylvester Eriksson-Bique}
\email{sylvester.d.eriksson-bique@jyu.fi}
\address{Department of Mathematics and Statistics
	P.O. Box 35
	FI-40014 University of Jyväskylä}

\author{Niilo Joutsenlahti}
\email{niilo.joutsenlahti@oulu.fi}
\address{Mathematical Sciences, P.O.Box 8000, FI-90014, University of Oulu, Finland}

\thanks{ S.E.-B. was supported by the Research Council of Finland grant 354241. The N.J was supported by the Jenny and Antti Wihuri Foundation. We thank Guy C. David and Hrant Hakobyan for posing and discussing many of the embedding problems that are solved in this paper.
}
\subjclass[2020]{30L05}
\keywords{bi-Lipschitz embedding, metric space, slit carpet, Sierpi\'nski carpet}

\title[Embeddings of slit carpets]{Quantitative non-embeddability theorems and metric embeddings of slit carpets}

\begin{document}

	\maketitle
	
	\begin{abstract}
		We study the bi-Lipschitz embedding problem for a class of metric spaces called slit carpets. First we show that the $n$th stage $\mathbb{M}_n$ of the standard slit carpet of Merenkov admits a bi-Lipschitz embedding into Euclidean space with distortion $ O(\sqrt{n})$. Then, we show a nearly sharp lower bound of $\Omega\left(\sqrt{\frac{n}{\log(n)}}\right)$.  This result quantifies the recent result by David and Eriksson-Bique, and thus gives a quantified answer to the question 8 in the paper by Heinonen and Semmes by showing that $\mathbb{M}_\infty$ does not bi-Lipschitz embed into Euclidean spaces. Then, we study the $L^1$ embeddability of the standard slit carpet. We show that the standard slit carpet has Lipschitz dimension $1$ in the sense of Cheeger and Kleiner, and consequently prove that it admits a bi-Lipschitz embedding into $ L^1 $.
		
		Third, we generalize the results in terms of targets and domains. First, we give a qualitative and Lebesgue differentiation based argument which shows that general slit carpets do not bi-Lipschitz embed into any Banach space with the RNP property. As a consequence, $\mathbb{M}_\infty$ does not bi-Lipschitz embed to $\ell_1$.  We then consider carpets $\mathbb{M}^a$ where the relative sizes of slits decrease according to a sequence $a\in c_0$. We give a quantitative $\beta$-number based argument which shows that the carpets $\mathbb{M}^a$ do not bi-Lipschitz embed into Hilbert space if $a\not\in \ell_{1+\epsilon}$. 
	\end{abstract}
	
	\section{Introduction}

\noindent In 1998, Heinonen and Semmes asked whether any Ahlfors regular metric space that admits a regular mapping to a Euclidean space also admits a bi-Lipschitz embedding into some Euclidean space \cite[Question~8]{MR1452413}. This problem was solved by David and the first author by studying slit carpets, a class of fractal-like metric spaces, originally introduced by Merenkov \cite{merenkov, seb2020regular}. They showed that despite admitting regular maps, slit carpets do not admit bi-Lipschitz embeddings into Euclidean spaces. Earlier, Merenkov had shown that slit carpets have interesting properties with respect to quasisymmetries: they do not embed into the plane and they have the so-called co-Hopfian property \cite{merenkov}. These carpets were further studied from the perspective of quasisymmetric uniformization in \cite{hakobyan2023quasisymmetric}.  This study raised a number of further questions about the embeddability of these slit carpets: What is the quantitative growth rate of distortion for approximations of the slit carpet? Do there exist embeddings to other targets (e.g. $L^1, \ell_1$)? What happens to embeddability if the carpet is perturbed by changing the sizes or positions of slits? In this work, we give answers to all three of these problems.

We will give precise definitions below, but for this introductory discussion, the slit carpet $\mathbb{M}_\infty$ can be thought of as the space obtained with this process: take the collection of squares consisting of the unit square $[0,1]^2$ and all the dyadic squares $[l2^{-n},(l+1)2^{-n}]\times  [k2^{-n},(k+1)2^{-n}] \subset [0,1]^2, n,l,k \in \mathbb{N}$ inside the unit square. Then for each square $[l2^{-n},(l+1)2^{-n}]\times  [k2^{-n},(k+1)2^{-n}]$ in this collection, remove the vertical segment $\{l2^{-n} + 2^{-n-1}\} \times [k2^{-n} + 2^{-n-2}, k2^{-n} + 3\cdot 2^{-n-2}]$, which is in the center of the square and whose length is half the side length of the square. For this remaining set, assign the shortest path metric and complete the space with respect to this metric. We denote this \textit{slit carpet} space with the symbol $\mathbb{M}_\infty$ and for each fixed $n$, its approximations by the symbol $\mathbb{M}_n$, where slits are only removed at dyadic scales up to $n$. We will also consider two generalizations of this space for which the locations or lengths of the removed slits can vary. But the following results stated in this introduction hold also for this standard version, so we postpone the definitions of the generalized slit carpets. We follow the nomenclature of \cite{hakobyan2023quasisymmetric} and refer to $\mathbb{M}_\infty$ as the \textit{(standard) slit carpet}. 

This paper can be divided into four main parts, which are briefly introduced below.

\subsection{Quantification of the Euclidean bi-Lipschitz distortion of $\mathbb{M}_\infty$}
In the first part of this paper, we use the techniques from \cite{seb2020regular} to give us a quantitative lower bound on the bi-Lipschitz distortion of $\mathbb{M}_n$.  Quantification here means that we study how fast the distortion grows as a function of $n$ for the finite approximations $\mathbb{M}_n$. Let $C_n$ be the smallest constant s.t. there is a $(1,C_n)$-bi-Lipschitz embedding $f:\mathbb{M}_n\to \R^N$ for some $N$. We study how $C_n$ grows in terms of $n$.

For instance, for the well-known Laakso graph (see Figure~\ref{fig:laakso}) we know that the space is not bi-Lipschitz embeddable to any Hilbert space and for the bi-Lipschitz distortion $C_n$ of the finite approximations of the Laakso graph we know $C_n \in \Theta(\sqrt{n})$ \cite{laakso2000ahlfors, lang2001bilipschitz}. Although the slit carpet has many properties similar to the Laakso graph, the techniques applicable to the Laakso graph fail when applied to the slit carpet. The main intuitive reason is that in the Laakso graph the slits on the next level are inside the previous level, but in the slit carpet the slits are not nested.

The Assouad embedding theorem implies that for the bi-Lipschitz distortion $C_n$ of $\mathbb{M}_n$ we have $C_n \in O(\sqrt{n})$ \cite{assouad1983plongements}. (See section \ref{nota} for the precise definitions). Interestingly, we also find a nearly sharp lower bound for the distortion,
\begin{theorem}\label{theorem1}
	$ C_n \in \Omega\left(\sqrt{\frac{n}{\log(n)}}\right) $ and $ C_n \in O\left(\sqrt{n}\right)$, where $ C_n$ denotes the optimal Euclidean distortion of $ \mathbb{M}_n $. 
\end{theorem}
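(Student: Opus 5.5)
The statement has two halves, which I would treat separately.

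\emph{Upper bound $C_n\in O(\sqrt n)$.} I would follow the route suggested by the Assouad embedding theorem, made quantitative. $\mathbb{M}_n$ is doubling with a constant independent of $n$, since each $\mathbb{M}_n$ is uniformly Ahlfors $2$-regular. Moreover, at scales below $2^{-n}$ it is locally bi-Lipschitz to planar pieces with uniform constants.

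The plan is to build scale-by-scale Lipschitz maps $\phi_k:\mathbb{M}_n\to\R^{M}$ for the dyadic scales $2^{-k}$, $k=0,\dots,n$. Each $\phi_k$ is $1$-Lipschitz, and it separates points at distance $\sim 2^{-k}$ by $\gtrsim 2^{-k}$. These come from Assouad's construction with $\R^M$-valued bump functions on a maximal $2^{-k}$-net, colored using boundedly many colors thanks to doubling.

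Then I would set $F=\sum_k \phi_k\otimes e_{k \bmod L}$, grouping scales into orthogonal blocks in the standard way. Below scale $2^{-n}$ I would add the identity-like planar chart, which is bi-Lipschitz there.

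For $d(x,y)\sim 2^{-j}$, scales $k\le j$ contribute $\sum_k 2^{-k}\min(1,2^{k-j})$, and the tail sums geometrically; orthogonality makes contributions add in $\ell^2$. The upper Lipschitz constant is then $\lesssim\sqrt{n}$ from the $\sim n$ comparable terms, while the lower bound is $\gtrsim d(x,y)$ from the $j$-th term. Rescaling gives distortion $O(\sqrt n)$.

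\emph{Lower bound.} I would quantify the argument of David and Eriksson-Bique. Fix $f:\mathbb{M}_n\to\R^N$ with $d\le|f(x)-f(y)|\le Dd$. For each slit at scale $2^{-k}$, the two points at the middle of the slit on opposite sides are at distance $\sim 2^{-k}$ in $\mathbb{M}_n$, but they are joined by geodesics going around the slit. The key quantity is a ``non-flatness'' or $\beta$-type quantity $\beta_k(Q)$ of $f$ on a square $Q$ of scale $2^{-k}$. It measures how far $f$ restricted to the square is from being affine along the vertical and horizontal geodesic directions.

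Step 1 is a Pythagorean (Hilbert space) lemma. Since $|f(x)-f(y)|$ is comparable to $d(x,y)$ while paths around a slit must ``bend'', the image of a length-$\ell$ horizontal segment crossing a slit at scale $k$ has $\sum$ of squared defects controlled by the total energy. Concretely,
\[
\sum_{k\le n}\ \vint_{Q\in\mathcal D_k}\beta_k(Q)^2\lesssim D^2 ,
\]
obtained from a telescoping/orthogonality identity of the form $\|f(b)-f(a)\|^2$ versus sums of squared midpoint defects along dyadic subdivisions of line segments in the carpet. Step 2 shows that every slit forces a definite defect, $\beta_k(Q)\gtrsim 1/D$ on a definite fraction of squares at each scale. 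The two sides of a slit are close in $\mathbb{M}_n$, so $f$ must fold, and without folding a nearly affine map would send the two sides apart. Combining the steps gives $n/D^2\lesssim D^2$, which is too weak, so the argument must be sharper. The non-nestedness of slits means a single scale's defect lives on a small set, and one must aggregate over scales using a pigeonhole argument. I would choose a sequence of about $n/\log n$ well-separated scales, with gaps of $\sim\log D\lesssim\log n$, so that the defects at different chosen scales are essentially independent and each of size $\gtrsim 1$ relative to $D$. This should yield $(n/\log n)\lesssim D^2$.

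The main obstacle is this Step 2 and its separation of scales. One must show that at each chosen scale, where $f$ is almost affine at coarser scales, the slit produces a defect of order one in the normalized sum. The $\log n$ loss comes exactly from needing scale gaps of size $\log D$ so that almost-affinity at the coarse scale propagates down. I would first try to prove that if $f$ is $\epsilon$-close to affine on a square at scale $2^{-k}$, then at scale $2^{-k-c\log(1/\epsilon)}$ near the slit it cannot remain close. That gives a uniform contribution for every block of $\log n$ scales.
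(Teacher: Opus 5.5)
\textbf{Upper bound.} Your upper bound follows essentially the paper's Assouad-type route, with one orthogonal block per scale giving the $\sqrt n$. (A fixed modulus $L$ would not work: every scale $k<j$ contributes at full size $d(x,y)$, so blocks could cancel.) One repair is needed below scale $2^{-n}$, where the planar projection is not bi-Lipschitz. Two points on opposite sides of a slit, at horizontal distance $\epsilon$ from it and depth $\delta\gg\epsilon$ below its tip, have path distance about $2\delta$ but Euclidean distance $2\epsilon$. You therefore need genuine local charts of slit balls, patched together by the coloring trick. The paper does this at every scale via Lemma~\ref{gluing}, $\pi_{n,i}$ and Lemma~\ref{projection}.

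\textbf{Lower bound: the gap.} As you concede, Steps 1--2 give only $D\gtrsim n^{1/4}$, and the proposed repair supplies no mechanism. Step 1 is only available, via the fibrewise estimate of Lemma~\ref{lem:betasum}, when $\beta_k(Q)$ is an average over fibres. But a slit at scale $2^{-k}$ forces large fibrewise $\beta$ only within horizontal distance $\sim 2^{-k}/D$ of it; this is the dichotomy of Lemma~\ref{lem:alternative}, and nothing is forced farther out. So each scale contributes only $\sim 1/D$ to the averaged square sum, however you space the scales. Since Step 1 already sums over all scales, there is no independence to exploit. Your sub-lemma does not change this, because any non-affinity it produces at the finer scale is again confined to strips of relative width $\sim 1/D$.

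\textbf{The missing idea.} It is the Lang--Plaut diamond iteration, which carries the full squared-stretch gain of a diamond from one scale to the next instead of averaging a localized defect.
\begin{itemize}
\item Let $L_k$ be the maximal ratio $|f(y)-f(y')|/d(y,y')$ over bottom/top midpoints of level-$k$ dyadic squares. Let $L_X$ be the maximal ratio over horizontal pairs whose segment crosses no slit.
\item For a maximizing pair $(y,y')$, the two copies $m,m'$ of the slit midpoint satisfy $|f(m)-f(m')|\ge d(m,m')=d(y,y')/2$. The parallelogram (short-diagonals) inequality in Hilbert space then yields a side, say $(y,m)$, with stretch at least $\sqrt{L_k^2+1/4}$.
\item The segment $[y,m]$ lies on a level-$k$ slit column. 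Shift it horizontally by $2^{-k-j-1}$ onto a level-$(k+j)$ center column; these horizontal segments cross no slit, and the relative cost is $O(L_X2^{-j})$.
\item Subdivide the shifted segment into level-$(k+j)$ vertical pairs. Pigeonholing gives $L_{k+j}^2\ge L_k^2+\tfrac14-O(L_XL_k2^{-j})$.
\item If $L_X,L_k\le\sqrt n/16$ (otherwise you are done) and $j=\lceil\log_2 n\rceil$, the error is at most $1/8$. Iterating about $n/j$ times gives $D^2\gtrsim n/\log n$.
\end{itemize}
The $\log n$ loss thus comes from absorbing the cost of the horizontal shift, which is needed precisely because slits at different scales are not nested. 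It does not come from propagating almost-affinity across scales.
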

In Section \ref{sec:upperbound} we give the upper bound, which follows from an Assouad embedding-type argument in the case of slit carpets. Here, we follow the proof in \cite{heinonen2001lectures}. The more difficult lower bound requires a different approach based on estimating the growth of a scale-dependent distortion constant for an embedding of a finite approximation $\mathbb{M}_n$. Here we use methods and intuition coming from earlier works employing a similar strategy \cite{lang2001bilipschitz, burago1998separated, MR1616159, MR3816520}. 
The key idea in these arguments is usually some sort of dichotomy-result, which states that any would-be bi-Lipschitz embedding is forced to introduce extra stretching in two different ways. Then repeating this over finer and finer scales causes the distortion to increase as $n$ increases. In our case, we strengthen and adapt the techniques from \cite{seb2020regular} to give the required iteration. We have to skip a certain number of scales and for this reason we lose a logarithmic factor in the lower bound. It is an intriguing open problem, if this is actually necessary.

\subsection{$L^1$ embeddability of the slit carpet}
On the second part, we turn our attention to the $L^1$-embeddability of the slit carpet. Such embeddings have been extensively studied in particular in relation to the sparsest cut problem \cite{ebnilpotent, cheegerkleinerann,naoryoung, CKN, ALA, LN}. Curiously, these embedding results also have a close connection to the so-called Lipschitz dimension of Cheeger and Kleiner \cite{davidliplight, cheegerkleiner}, which remains a poorly studied notion of dimension on metric spaces. Using this notion of Lipschitz dimension we are able to prove our second main result:
\begin{theorem}\label{thm:embeddingL1}
	$ \mathbb{M}_{\infty} $ admits a $ (1,K) $-bi-Lipschitz embedding into $ L^1([0,1]) $.
\end{theorem}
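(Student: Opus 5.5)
The plan is to show that $\mathbb{M}_\infty$ has Lipschitz dimension $1$ in the sense of Cheeger and Kleiner, and then invoke their theorem that a space of Lipschitz dimension at most $1$ admits a bi-Lipschitz embedding into $L^1$. Since the introduction points to exactly this route, the whole argument reduces to constructing a \emph{Lipschitz light} map $\pi:\mathbb{M}_\infty\to\R$. Such a map is Lipschitz, and there is a constant $C$ such that for every interval $I\subset\R$, the set $\pi^{-1}(I)$ splits into pieces of diameter at most $C|I|$. Here the pieces are the $|I|$-components, meaning the equivalence classes under chains with steps of size at most $|I|$.

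For the map I would take $\pi$ to be the horizontal coordinate $x$, composed with the natural $1$-Lipschitz quotient map from $\mathbb{M}_\infty$ to $[0,1]^2$. The vertical coordinate will not work, because the slits are vertical and do not cut horizontal strips. With vertical slits, level sets of $x$ are cut along the slits. A vertical strip $\{a\le x\le b\}$ of width $r\approx 2^{-k}$ still meets only boundedly many squares of side $2^{-k}$ in each row.

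The light condition is checked as follows. Fix $I$ with $|I|=r$, and pick $k$ with $2^{-k}\approx r$, taking a few extra scales as margin. Take two points $p,q$ in the same $r$-component of $\pi^{-1}(I)$. Joining them by an $r$-chain, I need to show they remain within distance $Cr$ in the slit metric. The key point is that moving vertically inside the strip by more than a few $2^{-k}$-squares forces crossing a long slit. The slit in a dyadic square of side $2^{-j}$, with $j\le k$, has length $2^{-j-1}$ and sits at $x=l2^{-j}+2^{-j-1}$.

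The key geometric lemma is this. Consider the strip $\pi^{-1}(I)$ intersected with a dyadic column of width $2^{-j}$ that contains $I$ near its center line. In that case the slit of each square of that column (side $2^{-j}$) blocks vertical passage in the strip. Points above and below must therefore be connected by leaving the strip, and $r$-chains cannot jump across a slit, since the metric distance across a slit at scale $\ge r$ is large.

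The main obstacle is the case where $I$ does not sit near a center line at scale $k$. I would handle it by choosing the largest $j$ with $2^{-j}\gtrsim r$ such that some slit line $x=l2^{-j}+2^{-j-1}$ meets or is within $r$ of $I$. Every interval of length $r$ lies within about $r$ of such a line at scale $j\approx k$, because these lines are $2^{-j}$-dense. The slits of that column then occupy half of each square's height, alternating with gaps of height $2^{-j-1}$. This cuts the strip into blocks of height $\lesssim 2^{-j}\approx r$. I must still check that the gaps do not let a chain pass to a neighboring block. Passing a gap means moving from one slit's neighbourhood across the top of the square into the next square, whose own slit lies at the same $x$. The gap region is therefore bounded by slits above and below within $\sim 2^{-j}$, so the component has diameter $\lesssim r$.

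I expect the bookkeeping here to be the delicate step. It involves showing that $\mathbb{M}_\infty$-distance across a slit of length $\ge r$ exceeds $r$, so $r$-chains really are blocked, and handling the finer slits inside the strip, which only increase separation. Combining the resulting uniform constant $C$ with the Cheeger–Kleiner theorem yields the $(1,K)$-bi-Lipschitz embedding into $L^1([0,1])$.
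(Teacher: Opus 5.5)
\textbf{There is a genuine gap: the map you chose is not Lipschitz light, and your key geometric lemma is false.}

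Reducing to a Lipschitz light map into $\R$ and then invoking Cheeger--Kleiner is the right framework, and it is what the paper does. The problem is the map $x\circ\pi$.

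Vertical slits are parallel to the vertical strip $\{x\in I\}$, so they cannot cut it into blocks of bounded height. A slit on the line $x=x_s$ only separates the part of the strip to its left from the part to its right, along the slit's height. Each side stays vertically connected, so points just to the left of the line $x=l2^{-j}+2^{-j-1}$ simply travel vertically past every slit on that line. The ``gaps'' between consecutive slits on one vertical line enclose nothing.

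More decisively, every slit lies on a line $x=(2l+1)2^{-n-1}$, so every slit has dyadic rational abscissa. Take $c\in(0,1)$ not a dyadic rational. Then $\{c\}\times[0,1]$ meets no slit, so it is a curve in $M_\infty\subset\mathbb{M}_\infty$ on which $x\circ\pi\equiv c$. Its endpoints are at distance at least $1$, because $\pi$ is $1$-Lipschitz. Hence for every interval $I\ni c$ with $|I|=r$, some $r$-component of $(x\circ\pi)^{-1}(I)$ has diameter at least $1$, and condition (2) of Definition \ref{def:liplight} fails as $r\to 0$. Such $c$ are dense, so no choice of scale $j$ or extra bookkeeping can repair this.

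Your reason for discarding the $y$-coordinate is also backwards. Vertical slits do cross horizontal strips. The $y$-coordinate fails because some horizontal lines meet no slit at all, for example the bottom edge $[0,1]\times\{0\}$, so again a fiber contains a curve of length $1$.

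\textbf{The missing idea is to tilt the level function.} It must be transverse to the slits and have no slit-free level lines. The paper uses $f=\Lambda_\alpha\circ\pi$ with $\Lambda_\alpha(x,y)=\alpha x+y$ and $\alpha\in(0,1/4)$ fixed.

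For $W=[x_0,x_0+r]$ with $2^{-k-4}\le r<2^{-k-3}$, the preimage $\Lambda_\alpha^{-1}(W)$ is a thin band of slope $-\alpha$. Between consecutive level-$k$ slit columns its height shifts by $\alpha 2^{-k}<2^{-k-2}$. Lemma \ref{lem:combinatorics} then shows that at least once every $O(1/\alpha)$ columns, the band's vertical cross-section lies inside the middle half of a level-$k$ slit. These slits cut the band into parallelograms of diameter $O(r)$. By Lemma \ref{lem:parallelogram}, these parallelograms are more than $r$ apart in the path metric, so each $r$-component lies over a single parallelogram.

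Finally, Merenkov's regularity of $\pi$ (Lemma \ref{lem:pi-regular}) covers the preimage of each parallelogram by boundedly many balls of radius $O(r)$. This step, controlling the possibly many lifts of a planar region to $\mathbb{M}_\infty$, is also absent from your sketch.
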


This result follows from showing that the $\mathbb{M}_\infty$ has a Lipschitz dimension of 1, as defined by Cheeger and Kleiner \cite{cheegerkleiner}. Then, by using their general embedding result \cite[Theorem 1.1.]{cheegerkleiner} as a black-box tool, we find that $\mathbb{M}_\infty$ admits a bi-Lipschitz embedding into the space $L^1$. Contrast this to the discussion below, where we show that $\mathbb{M}_\infty$ does not embed to $\ell_1$. A similar phenomenon was known to occur for many inverse limits, see \cite{cheegerkleiner} for further discussion.

\subsection{Bi-Lipschitz embeddings of $\mathbb{M}^a$}
Next, in the section \ref{betaNumberChap}, we study the embeddability of dyadic slit carpets, denoted by $\mathbb{M}^a$, where the slits have smaller size: instead of the standard slit carpet with slit length of $2^{-n-1}$, the slits on level $n$ in $\mathbb{M}^a$ have length $2^{-n-1}a_n$, where $a_i = \frac{1}{n_i}$ for some $n_i \in \mathbb{N} \backslash \{0\}$. Our goal is to understand how the embeddability behaves when the slits become smaller and smaller, and as a consequence the dyadic slit carpet no longer has slits at every scale and location. In particular, blow-ups of such a space are locally Euclidean, and thus different methods are needed.

We obtain a necessary condition for the existence of a bi-Lipschitz embedding of the dyadic slit carpets $\mathbb{M}^a$, and this condition is determined by the sequence $a=(a_i)_{i\in \N}$:
\begin{theorem}\label{theorem2}
	If $\mathbb{M}^{a}$ admits a bi-Lipschitz embedding into Euclidean space, then $a\in \ell_{1 + \epsilon}$ for every $\epsilon>0$.
\end{theorem}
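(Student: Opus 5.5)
We prove the contrapositive: if $a\notin\ell_{1+\epsilon}$ for some $\epsilon>0$, then no bi-Lipschitz embedding $f:\mathbb{M}^a\to\R^N$ exists. The tool is a quantitative Jones-type $\beta$-number estimate, a "traveling salesman" or square-function bound for Lipschitz maps into Hilbert space. So assume $f:\mathbb{M}^a\to\R^N$ is $(1,L)$-bi-Lipschitz; rescaling does not affect the argument. For a dyadic square $Q$ of level $n$ and a horizontal segment $\gamma$ in $\mathbb{M}^a$ crossing near the slit of $Q$, define a flatness number
\[
\beta_f(Q)=\inf_{A \text{ affine}}\; \frac{1}{\ell(Q)}\sup_{x\in 3Q}|f(x)-A(x)|,
\]
or an $L^2$-averaged version of it. Dorronsoro's / Jones' theorem for Lipschitz maps from planar domains (or from lines) into $\R^N$ gives the Carleson packing
\[
\sum_{Q\subset Q_0}\beta_f(Q)^2\,|Q|\lesssim L^2|Q_0|.
\]
In practice we apply this to the restriction of $f$ to the slit-free regions. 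We may alternatively run it along many horizontal lines through the carpet. A line avoiding slits is a geodesic in $\mathbb{M}^a$ away from slits, so one-dimensional Jones $\beta$-numbers apply to $f$ restricted to such lines, and Fubini averages over lines.

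The key local step is a lower bound at every square of level $n$ where a slit of relative length $a_n$ is present: $\beta_f(Q)\gtrsim_L a_n$ on a definite portion of $Q$. The reason is that two points $p,q$ on opposite sides of the slit, at distance $\sim a_n 2^{-n}$ in the plane, have intrinsic distance $\sim a_n2^{-n}$ only after going around the slit. Meanwhile, points directly above and below the slit tips are identified on one side. Concretely, take the two sides $\sigma^\pm$ of the slit, which are distinct in the completion. Any affine $A$ approximating $f$ on $3Q$ to accuracy $\delta 2^{-n}$ would force $|f(x^+)-f(x^-)|\le 2\delta 2^{-n}$ for the corresponding midpoint pair $x^\pm$ on $\sigma^\pm$, since the points just above and below the slit determine $A$ continuously. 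Bi-Lipschitzness gives $|f(x^+)-f(x^-)|\ge L^{-1}d(x^+,x^-)\gtrsim L^{-1}a_n2^{-n}$. Hence $\beta_f(Q)\gtrsim a_n/L$. This must be made robust, with $\beta$ bounded below on a neighbourhood of scale $\sim a_n 2^{-n}$ rather than $2^{-n}$. I expect this to be the main obstacle. It involves choosing the right squares: at scale $\sim a_n2^{-n}$ around the slit, $\beta\gtrsim 1/L$, which is large but supported on a region of area only $a_n^2 4^{-n}$. The bookkeeping then gives
\[
\sum_n\#\{\text{slits of level } n\}\cdot a_n^2 4^{-n}\sim\sum_n a_n^2,
\]
which is too weak. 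So instead I would use squares of side $2^{-n}$ with $\beta\gtrsim a_n/L$, possibly improved with an $L^1$-type $\beta$, giving a contribution $a_n^{p}$ per level for $p$ close to $1$.

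Summing over all $4^n$ squares of level $n$ yields $\sum_n a_n^{2}\lesssim L^2$ in the $L^2$ version. To reach the stated $\ell_{1+\epsilon}$ threshold, I would replace the $L^2$ Jones estimate with a $\beta$-number defined through the distortion of triples collinear in the plane. The slit forces a triangle-inequality defect of size $\sim a_n$, and by Pythagoras in Hilbert space this translates into a $\beta^2\sim a_n$ deficit. The defect gain is then $\gtrsim a_n^{1+\epsilon}$ after an interpolation step, which discards a small-$\epsilon$ loss coming from controlling overlapping neighbouring scales, as in the scale-skipping of Theorem~\ref{theorem1}. The packing condition then gives $\sum_n a_n^{1+\epsilon}\lesssim_{L,\epsilon}1$, contradicting $a\notin\ell_{1+\epsilon}$.
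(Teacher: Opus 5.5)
Your proposal has a genuine gap. You correctly diagnose the obstruction (Lebesgue averaging only yields $\sum_n a_n^2<\infty$), but you never get past it. On top of that, the packing estimates you rely on are not available as stated.
\begin{itemize}
\item A two-dimensional Dorronsoro/Jones bound needs $f$ to be Sobolev or Lipschitz on a planar domain. Here $f$ jumps across every slit, and every dyadic square contains a slit, so there are no slit-free regions to restrict to. Your local bound $\beta_f(Q)\gtrsim a_n/L$ is fine, but the Carleson packing for these $\beta_f(Q)$ is unproved.
\item Horizontal lines are the wrong fibres. They are transversal to the slits, and since $\sum_n a_n=\infty$ under your hypothesis, almost every horizontal line meets slits. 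Such lines do not even lift to curves in $\mathbb{M}^a$, let alone geodesics.
\item Your last paragraph is not an argument. There is no packing theorem for $\beta$-numbers built from triples collinear in the plane. A triple straddling a slit, such as $p,m_Q^-,q$, does not lie on a geodesic of $\mathbb{M}^a$, so no one-dimensional estimate applies to it. The ``interpolation step'' that should produce $a_n^{1+\epsilon}$ is never specified, and the scale skipping of Theorem \ref{theorem1} plays no role here.
\end{itemize}

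The missing idea is to change the measure in the transverse variable, not the $\beta$-number. Use vertical fibres $\gamma_t$: they are parallel to the slits and isometric for all but countably many $t$. Lemma \ref{lem:betasum} then gives $\beta(f\circ\gamma_t)\le CL^2$ for each such $t$. Because this bound is uniform in $t$, you may integrate it against \emph{any} probability measure $\mu$ on $[0,1]$.

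The local input is Lemma \ref{lem:alternative}. Since $|f(m_Q^+)-f(m_Q^-)|\ge d(m_Q^+,m_Q^-)=a_n2^{-n-1}$, alternative (1) fails. Let $m$ be the abscissa of the level-$n$ slits in a column. Then for every $t\le L^{-1}a_n2^{-n-5}$,
\[
\beta(f\circ\gamma_{m-t},I)^2+\beta(f\circ\gamma_{m+t},I)^2\ge 2^{-10},
\]
where $I$ is the interval of length $a_n2^{-n-1}$ at the height of each level-$n$ slit in that column.

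In each column these good $t$ form a central window of relative width $\sim a_n/L$. Lebesgue measure gives the union of windows mass $\sim a_n$, which is where your $a_n^2$ comes from. The self-similar measure of Lemma \ref{lem:measure} instead gives the central $\delta$-portion of every dyadic interval mass $\gtrsim\delta^\epsilon$ times that of the interval, so the windows get mass $\gtrsim(a_n/L)^\epsilon$. Each good pair of fibres contributes $\gtrsim\sum_l|I_l|\sim a_n$ at a dyadic scale $2^{-m_n}\sim a_n2^{-n}$, and these scales are distinct for distinct $n$. Summing over $n$ gives $L^{-\epsilon}\sum_n a_n^{1+\epsilon}\lesssim_\epsilon\int_0^1\beta(f\circ\gamma_t)\,d\mu(t)\le CL^2$.
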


The main innovation in proving this result is the application of techniques from Jones' traveling salesman theorem and in particular the novel usage of $\beta$-numbers in proving non-embeddability results. $\beta$-numbers quantify how linearly a function behaves at most scales and locations, and we exploit the fact that a bi-Lipschitz embedding will have to behave linearly, at most scales and locations along rectifiable curves.

The proof begins by proving a dichotomy result for $L$-Lipschitz embeddings of slits. This dichotomy gives a lower bound for the $\beta$-numbers of those embeddings at certain scales, and combining this lower bound with Jones' traveling salesman theorem, we obtain Theorem \ref{theorem2}.

\subsection{Bi-Lipschitz embeddings of generalized slit carpets}
In the final part of the paper, we will study whether the locations of the slits affect the embeddability of the carpet. We will generalize the nonembeddability of the standard slit carpet by considering slit carpets where the slits can have any location and any (bounded) size. This result generalizes the qualitative main result in \cite{seb2020regular} in two ways: first, choosing the locations of the slits arbitrarily does not make the carpet embeddable, and secondly we show that the carpet cannot be embedded into any RNP space; see Definition \ref{def:rnp}.  
 We introduce the notion of generalized slit carpets $\mathbf{Z}$, where slits can be arbitrarily placed, but must occur at every scale and location and satisfy a mild $h$-condition limiting their positioning (see the section \ref{genSlitCarpetSect} for precise definition). 
\begin{theorem}\label{thm:RNPnonemb}
	A generalized slit carpet $\mathbf{Z}$ cannot be embedded into any Banach space with the RNP property, in particular, into any Euclidean space. 
\end{theorem}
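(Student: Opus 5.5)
We argue by contradiction via differentiation. Suppose $f:\mathbf{Z}\to V$ is an $L$-bi-Lipschitz embedding into a Banach space $V$ with the RNP. The carpet contains a rich family of rectifiable curves: for instance, for each horizontal line segment $\gamma_y(t)=(t,y)$ (and similarly vertical ones) that avoids all slits, or more generally piecewise segments passing alongside the slits, the curve is a geodesic-like rectifiable curve in $\mathbf{Z}$ and $f\circ\gamma_y$ is Lipschitz into $V$. By the RNP, $f\circ\gamma_y$ is differentiable at almost every $t$, with derivative $D_y(t)\in V$ satisfying $L^{-1}\le \|D_y(t)\|\le L$ a.e. where $\gamma_y$ is locally a geodesic. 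Integrating over $y$ (Fubini) we get a measurable ``horizontal derivative'' $\partial_1 f$ defined for a.e. point of the carpet with respect to the planar Lebesgue measure, and similarly a vertical derivative $\partial_2 f$ along vertical lines.

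The key step is a blow-up at a Lebesgue density point. Since $\partial_1 f\in L^\infty$ with values in $V$ (Bochner measurable, as $V$ has RNP and the derivative is a limit of difference quotients), the Lebesgue differentiation theorem for Bochner integrable functions gives a point $p$ such that $\partial_1 f$ is approximately constant, equal to $v\neq 0$, in measure on small squares $Q$ around $p$: $\vint_Q \|\partial_1 f - v\|\,dx\to0$. By the definition of a generalized slit carpet, every sufficiently small square $Q$ around $p$ contains a slit $S$ of size comparable to $\ell(Q)$ (the $h$-condition guaranteeing that the slit sits at a definite distance from $\partial Q$, so horizontal lines through a definite fraction of $Q$ cross its vertical extent). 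Take the two points $x^\pm$ on opposite sides of $S$ at its midpoint: their distance in $\mathbf{Z}$ is comparable to $\ell(Q)$ only through going around the slit... rather, take the two ``sides'' of the slit, i.e. the points $a^-$, $a^+$ on the left and right copies of the slit midpoint, which are at distance $\gtrsim \ell(Q)$ in $\mathbf{Z}$ (comparable to slit length), whereas the points $b^-,b^+$ at height just above the slit are close.

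Now compare: by Fubini and the approximate constancy, for most horizontal lines $y$ crossing $Q$ near the slit, $f(\gamma_y(t_2))-f(\gamma_y(t_1)) \approx (t_2-t_1)v$ with error $o(\ell(Q))$. Choosing horizontal segments from a point $z^-$ left of the slit to $z^+$ right of it at heights just above and just below, and connecting vertically along lines where the vertical derivative is also approximately constant $w$, we get that $f(a^+)-f(a^-)$ is approximated, up to $o(\ell(Q))$, by a path going around the slit end, in which the contributions $\pm(\text{height})w$ cancel and the horizontal contributions match. Hence $\|f(a^+)-f(a^-)\|=o(\ell(Q))$, while $d(a^+,a^-)\gtrsim \ell(Q)$, contradicting the lower bi-Lipschitz bound. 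The main obstacle is making the ``approximately linear in measure'' statement yield pointwise control at the specific points $a^\pm$ on the slit; I would handle this by using the Lipschitz bound to move from a.e.-good lines to nearby ones (errors $\le L\cdot$ small displacement) and by choosing good lines via Chebyshev on a set of positive proportion, which the $h$-condition guarantees lies within the relevant strip. Euclidean spaces have the RNP, giving the final clause.
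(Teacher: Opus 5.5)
Your overall mechanism is the right one: blow up at a Lebesgue point of a directional derivative, walk around one end of a nearby slit, let the two vertical contributions cancel, and contradict the lower bi-Lipschitz bound at the two sides of the slit. The gap is the horizontal half of the argument, because $\partial_1 f$ does not exist.

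It cannot be defined almost everywhere by restricting $f$ to horizontal lines, since almost every horizontal line is not a curve in $\mathbf{Z}$. To see this, fix an interval $(x_1,x_2)$ and apply ``slits at every scale and location'' to balls $B((x_m,c),r)$, where $x_m$ is the midpoint and $r<(x_2-x_1)/2$. This shows that the union of the vertical extents $(a_S,b_S)$ of slits with $x_S\in(x_1,x_2)$ contains, inside every $(c-r,c+r)$, an interval of length at least $\delta r$. So its complement is porous, hence Lebesgue-null. Taking rational intervals, for a.e.\ $y$ the horizontal line at height $y$ runs through the interior of slits at a dense set of $x$-positions. In $\mathbb{M}_\infty$ this happens at all $x=(2l+1)2^{-n-1}$ whenever $y \bmod 2^{-n}\in(2^{-n-2},3\cdot 2^{-n-2})$, which occurs for infinitely many $n$. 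Across each such slit, points at Euclidean distance $2s\to 0$ remain at $d_Z$-distance at least $2\min(y-a_S,b_S-y)$. Hence $f\circ\gamma_y$ is not Lipschitz on any interval, and the RNP gives nothing.

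Consequences for your argument:
\begin{itemize}
\item Your step ``for most horizontal lines crossing $Q$ near the slit, $f(\gamma_y(t_2))-f(\gamma_y(t_1))\approx (t_2-t_1)v$'' has no justification.
\item A Chebyshev selection among horizontal lines is impossible, since the admissible heights form a null set.
\item Even a single horizontal segment ``just above'' or ``just below'' the slit may be cut by smaller slits.
\end{itemize}

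The missing ingredient is what the $h$-condition actually says; you read it as keeping the slit away from $\partial Q$. It says that the segments $[x_S-\eta\ell_S,x_S+\eta\ell_S]\times\{a_S\}$ and $[x_S-\eta\ell_S,x_S+\eta\ell_S]\times\{b_S\}$ through the endpoints of $S$ lie in $Z$. This corridor supplies the only horizontal move you need. Since that move has length $2t\le 2\varepsilon\ell_S$, the Lipschitz bound alone controls its contribution by $2L\varepsilon\ell_S$. No horizontal derivative, and no $v\neq 0$, is needed.

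With this the argument closes using $\partial_y f$ only, which is legitimate because every vertical line $x\notin\{x_S\}$ lies in $Z$:
\begin{itemize}
\item Take $p$ a Lebesgue point of $\partial_y f$.
\item Go from $(x_S\pm t,y_S)$ down the vertical lines to height $a_S$, then cross along the corridor.
\item Average over $t\in[0,\varepsilon\ell_S]$. By Lebesgue differentiation along the nicely shrinking rectangles $S^\varepsilon_{\pm}$, the difference of the integrals $\int_{a_S}^{y_S}\partial_y f(x_S+t,s)\,ds$ and $\int_{a_S}^{y_S}\partial_y f(x_S-t,s)\,ds$ is $o(\ell_S)$ on average.
\item This gives some $t$ with $\|f(x_S+t,y_S)-f(x_S-t,y_S)\|\le o(\ell_S)+2L\varepsilon\ell_S$. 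The left side is at least $L^{-1}\ell_S$, because any path between these points must pass around an end of $S$. This is a contradiction for $\varepsilon$ small relative to $L$.
\end{itemize}
This is exactly Lemma \ref{lem:midpoints} and the paper's proof.
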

For example, $\mathbb{M}_\infty$ is such a carpet, and $\ell_1$ is a Banach space with the RNP property. Thus, $\mathbb{M}_\infty$ does not bi-Lipschitz embed to $\ell_1$. In \cite{seb2020regular}, the authors showed that $\mathbb{M}_\infty$ does not bi-Lipschitz embed to any uniformly convex Banach space. The example of $\ell_1$ shows how RNP targets generalize this result. The carpets $\mathbb{M}^a$, however, are not included in Theorem \ref{thm:RNPnonemb}, since in them slits can decrease in size.

The key idea in the proof of this theorem is that for any Lipschitz map from $\mathbf{Z}$ to a RNP Banach space, the $y$-derivative of the map must be asymptotically constant. This uses Lebesgue differentiation and measurability. Thus, any nearby vertical curves are infinitesimally mapped to nearly parallel line segments. As a consequence, the midpoints of slits are mapped close together, which contradicts the Lipschitz bound on $f$.

\section{Notation and definitions} \label{nota}
\noindent  The distance on a metric space $X$ is denoted by $d_X(x,y)$ for $x,y \in X$. Where ambiguity does not arise, we will drop the subscript. Any path joining points $ x,y \in X$ is denoted by $ [x,y] $ (note that $[x,y]$ is also used to denote a closed interval in $\mathbb{R}$, but it will be clear from the context if it is an interval or a path) and its length is denoted by $ \ell([x,y]) $. A ball centered at $x$ with a radius of $r$ is denoted by $B(x,r):= \{y: d(x,y)<r\}$.

A subset $N \subset (X,d)$ is called $\epsilon$\textit{-separated}, if for every distinct $n,m\in N$ we have $d(n,m)\geq \epsilon$. A maximal $\epsilon$-separated set $N$ is called an $\epsilon$\textit{-net}. By maximality, every point of $X$ lies within a distance of $\epsilon$ from some element in $N$, i.e., the following holds: for every $x \in X$, there exists $y \in N$ such that $ d(x,y)<\epsilon$.

\begin{definition}\label{lips}
	We say that a mapping $ f:(X,d_X)\to(Y,d_Y) $ is \textit{$ (a,b) $-bi-Lipschitz mapping}, if
	\begin{align}\label{lipeq2}
		ad_X(x,y)\leq d_Y(f(x),f(y)) \leq b d_X(x,y),
	\end{align}
	holds for every $ x,y\in X $. If only the upper bound $d_Y(f(x),f(y)) \leq b d_X(x,y)$ holds, we say that $f$ is $b$-Lipschitz.

\end{definition}

For a given bi-Lipschitz embedding $ f:(X,d_X)\to(Y,d_Y) $, the infimum of all $ C $ for which there exists some scaling constant $r>0$ such that $f$ is $(r,Cr)$-bi-Lipschitz, is called the \textit{distortion of $ f $}, and denoted by $ \dist(f) $. The infimum distortion over all embeddings is called the \textit{optimal distortion} and denoted by $ c_{(X,Y)} $. We will mostly be interested in $c_{(X,\ell_2)}$, where $\ell_2$ is the usual infinite dimensional separable Hilbert space. More generally, $\ell_p=\{a=(a_i)_{i\in\N} : \sum_{i=1}^\infty  |a_i|^p<\infty\}$ are the standard sequence spaces for $p\in (1,\infty)$ and $L^1=\{f:[0,1]\to [-\infty,\infty] : \int |f|d\lambda < \infty\}$ is taken as the space of integrable functions on $[0,1]$. The space  $L^1$ is equipped with the usual norm and metric. Throughout, $\lambda$ will denote the Lebesgue measure on $\R$.

	A \textit{graph} $ G $ is a pair $G= (V,E) $ where $ V $ is the set of \textit{vertices} and $ E $ is a set of \textit{edges} connecting the vertices. 

\subsection{Construction of the standard slit carpets}\label{subsec:def_slit_carp}

First, let $\bM_0=M_0=[0,1]^2$. Then let $\cD_n$ be the collection of dyadic squares in $[0,1]^2$ at level $n\in \N$ given by 
\[
Q_{l,k}^n:=[l2^{-n},(l+1)2^{-n}]\times  [k2^{-n},(k+1)2^{-n}]
\]
for $l,k=0,\cdots, 2^{n}-1$. For each dyadic square $Q^n_{l,k}$ let $t^n_{l,k}:=((2l+1)2^{-n-1}, (4k+3)2^{-n-2})$ and $b^n_{l,k}=((2l+1)2^{-n-1}, (4k+1)2^{-n-2})$. We call the line segments $s_{l,k}^n:=[b^n_{l,k}, t^n_{l,k}]$ joining the two of these end points \emph{slits}. Using these, we define recursively sets $M_n$ as follows:
\[
M_n := M_{n-1} \setminus \bigcup_{l,k} s^n_{l,k},
\]
by cutting along the slits at level $n$. Then, we equip each open set $M_n$ with the path metric $d_n$, and complete the space to obtain $(\bM_n, d_{\mathbb{M}_n})$. The inclusion $p_{n,n-1}: M_{n}\to M_{n-1}$ induces a surjective $1-$Lipschitz map from the closures $\pi_{n,n-1}:\bM_n \to \bM_{n-1}$, and $\bM_\infty$ is the inverse limit of metric spaces
\[
\bM_0 \xleftarrow[\pi_{1,0}]{} \bM_1 \xleftarrow[\pi_{2,1}]{} \bM_2 \cdots \xleftarrow[\pi_{n,n-1}]{} \bM_n \xleftarrow[\pi_{n+1,n}]{} \bM_{n+1} \cdots \xleftarrow[]{}  \bM_\infty.
\]
The inverse limit $\bM_\infty$ consists of points represented by sequences $(x_k)_{k\in \N}, x_k\in \bM_k$ for which $\pi_{n,n-1}(x_n)=x_{n-1}$ for all $n\in \N$. The distance on the inverse limit is given by 
\[
d((x_k)_{k\in \N}, (y_k)_{k\in \N}) = \lim_{k\to \infty} d(x_k,y_k),
\]
where the limit exists since the projection maps $\pi_{n,n-1}$ are $1-$Lipschitz, and thus the sequence of distances is increasing.

The maps $p_{n,n-1}$ are injective, and thus the set $M_\infty = \bigcap_{n
	\in \N} M_n \subset [0,1]^2$ is injectively included in the inverse limit $\bM_\infty$. 
    Let $\pi_{n,0}:\mathbb{M}_n\to \mathbb{M}_0$ be the maps obtained as compositions $\pi_{n,0}=\pi_{1,0}\circ \cdots \circ \pi_{n,n-1}$ and as the inverse limit of such maps when $n=\infty$, i.e. $\pi_{\infty,0}((x_k)_{k\in \N})=x_0$.
If $p$ is a point in $\mathbb{M}_n$, then we denote the $x$- and $y$-coordinates of $\pi_{n,0}(p)$ by $p_x$ and $p_y$. Similarly, define $\pi_{n,m}$ for all $m<n$ with $n,m\in \N$ (or $n=\infty$).

In the next section, we will show how to bi-Lipschitz embed $\mathbb{M}_n$ into Euclidean space so that the distortion is controlled from above by $C\sqrt{n}$. For this purpose, we need the following lemma.
\begin{lemma}\label{projection}
	Let $i=1,...,n-1$. 	The projection maps $\pi_{n,i}:\mathbb{M}_n\to\mathbb{M}_i$ are $1$-Lipschitz and satisfy the quantitative lower bound
	\begin{equation}\label{eq:quant-lower}
		d_{\mathbb{M}_i}\big(\pi_{n,i}(x),\pi_{n,i}(y)\big)\ge \frac{d_{\mathbb{M}_n}(x,y)-2\cdot 2^{-i}}{2}
		\qquad\text{for all }x,y\in\mathbb{M}_n.
	\end{equation}
	In particular, whenever $d_{\mathbb{M}_n}(x,y)\ge 4\cdot 2^{-i}$ we have
	\[
	d_{\mathbb{M}_i}\big(\pi_{n,i}(x),\pi_{n,i}(y)\big)\ge 2^{-2}\,d_{\mathbb{M}_n}(x,y).
	\]
	
\end{lemma}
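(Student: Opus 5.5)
The $1$-Lipschitz property comes directly from the construction. Each $\pi_{k,k-1}$ is induced by the inclusion $M_k\subset M_{k-1}$. A path in $M_k$ is also a path in $M_{k-1}$ with the same length, so $d_{k-1}\le d_k$ on $M_k$, and this passes to completions. Composing gives that $\pi_{n,i}$ is $1$-Lipschitz. The real content is the lower bound \eqref{eq:quant-lower}. The plan is to take a near-geodesic in $\mathbb{M}_i$ between $\pi_{n,i}(x)$ and $\pi_{n,i}(y)$ and modify it into a path in $\mathbb{M}_n$ of length at most twice as long, up to an additive error of order $2^{-i}$.

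First I reduce to well-placed endpoints. Every point of $\mathbb{M}_n$ lies in some level-$i$ dyadic square $Q\in\mathcal{D}_i$, or more precisely in a lift of its closure in $\mathbb{M}_n$. Any two points of such a lifted square (on the same side of its slits) are joined in $\mathbb{M}_n$ by a path of length comparable to $2^{-i}$. So I may replace $x,y$ by nearby points $x',y'$ on the boundary grid of level-$i$ dyadic squares, namely the lines $\{l2^{-i}\}\times[0,1]$ and $[0,1]\times\{k2^{-i}\}$, which contain no slits of level $\ge i$. This costs at most $2^{-i}$ in total, once the constants are tuned; this tuning is where the "$2\cdot 2^{-i}$" comes from.

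Next I take a geodesic $\gamma$ in $\mathbb{M}_i$ from $\pi_{n,i}(x')$ to $\pi_{n,i}(y')$ and replace it with a grid path. The slits of levels $\le i$ are vertical segments centered in dyadic squares of side $\ge 2^{-i}$. The geodesic decomposes into pieces crossing level-$i$ squares, and each piece can be replaced by a path along the level-$i$ grid lines joining its entry and exit points. Inside a square the pieces are Euclidean segments, possibly bending at slit endpoints, but bends occur only at slit tips of levels $<i$, which lie on the level-$i$ grid. Replacing a segment inside a square by a boundary path multiplies its length by at most $\sqrt2\le 2$: going around via two sides costs the $\ell^1$ length, which is at most $\sqrt2$ times the Euclidean length. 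The case where entry and exit lie on opposite sides needs a closer check, but the $\ell^1$ bound still gives a factor of at most $2$. The grid lines of level $i$ are disjoint from the interiors of all slits of level $\ge i$; level-$i$ slits sit at the centers of level-$i$ squares. Hence the grid path lifts isometrically to $\mathbb{M}_n$. This yields $d_{\mathbb{M}_n}(x',y')\le 2\,d_{\mathbb{M}_i}(\pi_{n,i}x',\pi_{n,i}y')$, and combining with the endpoint step gives \eqref{eq:quant-lower}.

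The main obstacle is making the endpoint reduction and the square-by-square rerouting honest near slits of level $<i$ that pass through square boundaries. The plan handles this by noting that a slit of level $j<i$ has endpoints on dyadic coordinates of level $j+2\le i+1$. The concern is that the grid of level $i$ might intersect interiors of coarser slits. These are removed in $\mathbb{M}_i$ as well, so the geodesic already avoids them, and my rerouting must also avoid them. I would therefore run the rerouting with squares of level $i+1$ where needed, which changes only constants. Finally, the "in particular" statement is arithmetic: if $d\ge 4\cdot2^{-i}$, then $(d-2\cdot2^{-i})/2\ge d/4$.
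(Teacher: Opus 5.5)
Your route is the paper's: move the endpoints to the level-$i$ grid, replace the $\mathbb{M}_i$-geodesic square by square by boundary arcs at cost factor $2$, and lift isometrically because no slit of level $>i$ meets the level-$i$ grid. Two parts do not hold up as written.

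First, the constant cannot be reached by ``tuning'' this scheme. If you move $x,y$ to grid points $x',y'$ before choosing the geodesic, you pay the displacement once in $\mathbb{M}_n$. You then pay it twice more, because the geodesic term is doubled and $d_{\mathbb{M}_i}(\pi_{n,i}(x'),\pi_{n,i}(y'))\le d_{\mathbb{M}_i}(\pi_{n,i}(x),\pi_{n,i}(y))+d_{\mathbb{M}_n}(x,x')+d_{\mathbb{M}_n}(y,y')$. The centre of a level-$i$ square is at distance at least $2^{-i-1}$ from the grid. So for two centred points the additive term is at least $3\cdot 2^{-i}$, and at best you get $d_{\mathbb{M}_n}(x,y)\le 2\,d_{\mathbb{M}_i}(\pi_{n,i}(x),\pi_{n,i}(y))+3\cdot 2^{-i}$. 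That gives only the factor $2^{-3}$ in the ``in particular'' clause. The paper's write-up has the same slip: its chain ends with $\pi_{n,i}(x'),\pi_{n,i}(y')$, and ``rearranging'' silently replaces them by $\pi_{n,i}(x),\pi_{n,i}(y)$.

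The fix is not to pre-move the endpoints:
\begin{itemize}
\item Take a geodesic $\gamma'$ from $\pi_{n,i}(x)$ to $\pi_{n,i}(y)$. Let $p$ be its last point in $\overline{Q_x}$ and $q$ its first later point in $\overline{Q_y}$; grid points of $\mathbb{M}_i$ have unique lifts to $\mathbb{M}_n$.
\item Join $x$ to $p$ inside $Q_x$ by a vertical segment to the top or bottom edge, followed by an arc of $\partial Q_x$. Checking the possible positions of $p$ gives $d_{\mathbb{M}_n}(x,p)\le |x-p|+2^{-i}$, with $|x-p|$ Euclidean. For example, for $Q_x=[0,h]^2$, $x=(a,s)$, $p=(0,c)$, the two routes have lengths $a+s+c$ and $a+2h-s-c$, whose minimum is at most $a+h$.
\item Since $|x-p|$ is at most the length of the arc of $\gamma'$ up to $p$, and the same holds at $y$, only the middle arc from $p$ to $q$ gets doubled. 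This gives $d_{\mathbb{M}_n}(x,y)\le 2\ell(\gamma')+2\cdot 2^{-i}$, which is exactly the stated bound.
\end{itemize}

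Second, your ``main obstacle'' paragraph misdiagnoses the situation near coarse slits, and the proposed remedy would not work. Slits of level $j<i$ lie on vertical lines of the level-$i$ grid but never meet an open level-$i$ square. Running along one side of a removed slit is a legitimate path in the completion. Each rerouted piece runs along the boundary of a single closed square, on that square's side of any such slit, and consecutive pieces meet at points of $\gamma'$. So nothing is crossed and no refinement is needed. Passing to level $i+1$ would not help anyway: every slit of level $\le i$, including the level-$i$ slits, lies on a vertical grid line of level $i+1$.

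Two minor inaccuracies. $\gamma'$ can also bend at the tips of the level-$i$ slits, which lie inside level-$i$ squares; this is harmless, since only the chord bound is used. And when entry and exit points lie on opposite sides, the factor $2$ comes from the boundary arc having length at most $2\cdot 2^{-i}$ while the chord is at least $2^{-i}$, not from an $\ell^1$ bound.
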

\begin{proof}
	
	Since $ \pi_{n,i}$ is a $1$-Lipschitz, we have
	$$d_{\mathbb{M}_i}(\pi_{n,i}(x),\pi_{n,i}(y)) \leq d_{\mathbb{M}_n}(x,y).$$ Thus we need to show the lower Lipschitz bound to prove the bi-Lipschitz property.
	
	To obtain a lower bound we note that given two points $x,y \in \mathbb{M}_n$, there always exists dyadic squares $Q_x$ and $Q_y$ of side length $2^{-i}$ such that $x \in Q_x$ and $y \in Q_y$. Next, we will construct a "taxicab-path" $[x,y]:=\gamma$ connecting the points $x$ and $y$ in $\mathbb{M}_n$. Note that since the slits are parallel to the $y$-axis, we can connect the points $x$ and $y$ to points $x'$ and $y'$ on the top side of $Q_x$ and $Q_y$ with straight line paths $[x,x']$ and $[y,y']$, respectively. Denote the shortest path connecting $\pi_{n,i}(x')$ and $\pi_{n,i}(y')$ by $\gamma'$. 
	Notice that for any square $Q_j$ the path $ \gamma'$ intersects, it connects two points on the circumference of $Q_j$. There exists a path traveling along the edges of $Q_j$ connecting these same two points that is at most twice the distance of the subsegment along $\gamma'$ connecting the same two points. Form $\gamma''$ in $\mathbb{M}_i$  by concatenating all of these detours along the grid set 
	$$\{ l2^{-i} \times [0,1]  \subset \mathbb{M}_n: l=0,\cdots, 2^{i}-1\} \cup \{ [0,1] \times  l2^{-i}  \subset \mathbb{M}_n: l=0,\cdots, 2^{i}-1\}.$$
    The path $\gamma''$ can be lifted isometrically to $\mathbb{M}_n$, since all the slits in $\mathbb{M}_k$ for $k=i+1,\dots, n+1$ occur within the dyadic squares of level $i$. This lifted path is at most twice as long as $\gamma'$ and thus $\ell(\gamma'')\leq 2 d_{\mathbb{M}_n}(x',y')$. We define $\gamma:= [x,x'] \cup \gamma'' \cup [y,y']$ and consider its projection onto $\mathbb{M}_{i}$. This yields
	\begin{align*}
		d_{\mathbb{M}_i}(\pi_{n,i}(x),\pi_{n,i}(y)) & \leq d_{\mathbb{M}_n}(x,y) \leq \ell(\gamma)\\
		& \leq  2 \cdot d_{\mathbb{M}_i}(\pi_{n,i}(x'),\pi_{n,i}(y'))+2\cdot2^{-i}
	\end{align*}

	Rearranging this estimate gives the \eqref{eq:quant-lower}. For the latter claim, note that if $d_{\mathbb{M}_n}(x,y)\geq 4\cdot 2^{-i}$, then  $d_{\mathbb{M}_n}(x,y)-2\cdot 2^{-i}\geq 2^{-1} d_{\mathbb{M}_n}(x,y)$, and the claim follows.
	
\end{proof}

\section{Upper bound for the Euclidean distortion in Theorem \ref{theorem1}}\label{sec:upperbound}

\noindent In this section, we will show how to construct a $(1,\sqrt{n}C)$-bi-Lipschitz embedding from $\mathbb{M}_n$ to a Euclidean space. We will need Theorem~3.2 from \cite{lang2001bilipschitz}, so we paraphrase it here without proof:
\begin{lemma}\label{gluing} Let $C\geq 1$, $n_A,n_B\in \N$. There exists a $D\geq 1$ s.t. the following holds. 
	Suppose $A,B \subset (X,d) $ and that there exist $(1,C)$-bi-Lipschitz embeddings $f_A : A \to \mathbb{R}^{n_A}$ and $f_B : B \to \mathbb{R}^{n_B}$. Then there exists a $(1,D)$-bi-Lipschitz embedding $f:A\cup B \to \mathbb{R}^{n_A + n_B +1}$.
\end{lemma}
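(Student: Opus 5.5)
The plan is to glue $f_A$ and $f_B$ using one extra coordinate that records the distance to one of the sets, in the spirit of Lang--Plaut. After rescaling we may assume $f_A$ and $f_B$ are $1$-Lipschitz and satisfy $|f_A(x)-f_A(y)|\ge C^{-1}d(x,y)$, and similarly for $f_B$. Each $f_A$ extends to $\bar f_A$ on all of $X$, $\sqrt{n_A}$-Lipschitz, by coordinatewise McShane extension; likewise $\bar f_B$. The main difficulty is controlling $\sqrt{n_A}$ (a dimension factor), so that $D$ depends on $C,n_A,n_B$, as the statement allows. We may also assume $A,B$ are closed, passing to closures by continuity.

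Define
\[
f(x)=\bigl(\bar f_A(x)\,\phi(x),\ \bar f_B(x)\,\psi(x),\ d(x,A)\bigr)
\]
with suitable cutoffs. The simpler variant to try first is
\[
f(x)=\bigl(\bar f_A(x),\ \bar f_B(x),\ d(x,A)\bigr)\in\mathbb{R}^{n_A+n_B+1},
\]
restricted to $A\cup B$. The upper Lipschitz bound is immediate: $f$ is $(\sqrt{n_A}+\sqrt{n_B}+1)$-Lipschitz.

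For the lower bound, take $x,y\in A\cup B$. If both lie in $A$, the first block gives $|f(x)-f(y)|\ge C^{-1}d(x,y)$; similarly if both lie in $B$. In the mixed case $x\in A$, $y\in B\setminus A$, set $t=d(y,A)$, so the last coordinate gives $|f(x)-f(y)|\ge t$. Pick $a\in A$ with $d(y,a)\le 2t$. Then
\[
|\bar f_A(x)-\bar f_A(y)|\ge |f_A(x)-f_A(a)|-\sqrt{n_A}\,2t\ge C^{-1}\bigl(d(x,y)-2t\bigr)-2\sqrt{n_A}\,t.
\]
So either $t\ge \epsilon\, d(x,y)$ with $\epsilon=(4C\sqrt{n_A}+4)^{-1}$, or the first block already gives $|f(x)-f(y)|\gtrsim C^{-1}d(x,y)$. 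In both cases
\[
|f(x)-f(y)|\ge c(C,n_A)\,d(x,y),
\]
and rescaling gives a $(1,D)$ embedding.

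The main obstacle is the extension step. It needs Lipschitz extension into Euclidean space with a constant depending only on the dimension, which McShane provides coordinatewise. It also needs the distance coordinate to separate mixed pairs, which is exactly the dichotomy above, so I expect it goes through as written. The extra $B$-block is not even needed for mixed pairs; it is used only for pairs inside $B$.
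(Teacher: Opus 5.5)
Your proof is correct. It differs from the paper mainly in that the paper gives no proof of this lemma at all: it quotes Theorem~3.2 of Lang--Plaut as a black box. Your argument is a self-contained replacement for that citation.

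The construction works as you describe. You take coordinatewise McShane extensions $\bar f_A,\bar f_B$ of the normalized charts, each $\sqrt{n_A}$- resp.\ $\sqrt{n_B}$-Lipschitz, and add the single coordinate $d(\cdot,A)$. Pairs inside $A$ or inside $B$ are separated by the corresponding block. For a mixed pair $x\in A$, $y\in B\setminus A$, the dichotomy on $t=d(y,A)$ does the job: either the last coordinate gives at least $\epsilon\, d(x,y)$, or the $A$-block gives at least $d(x,y)/(2C)$, where $\epsilon=(4C\sqrt{n_A}+4)^{-1}$. Since $\epsilon\le 1/(2C)$, the overall lower constant is $\epsilon$. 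Rescaling then gives the explicit value $D=(\sqrt{n_A}+\sqrt{n_B}+1)(4C\sqrt{n_A}+4)$, which depends only on $C,n_A,n_B$, as the statement allows.

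Two minor points:
\begin{enumerate}
\item The cutoffs $\phi,\psi$ you mention first play no role and can be dropped.
\item The reduction to closed $A,B$ is harmless, because the charts extend to the closures as $\mathbb{R}^{n}$ is complete. Alternatively, you can treat $t=0$ directly by choosing $a\in A$ with $d(y,a)$ arbitrarily small and letting it tend to $0$.
\end{enumerate}

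Compared with the citation, your route yields an explicit $D$ and makes visible that the extra coordinate, and in fact a distance to just one of the two sets, is what separates mixed pairs. The paper's route is shorter but leaves all of this inside the reference.
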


The upper bound in Theorem \ref{theorem1} is given here.

\begin{theorem}\label{upper_bound}
	Suppose $ \mathbb{M}_n $ is the level $ n $ approximation of Merenkov's slit carpet. Then there exist constants $C,L\geq 1, L\in \N$, which are independent  of $ n\in \N$, and a $(1,C\sqrt{n})$-bi-Lipschitz mapping $ G: \mathbb{M}_n \to \mathbb{R}^L.$
\end{theorem}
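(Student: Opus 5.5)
We follow the Assouad-type construction in \cite{heinonen2001lectures}: build one Lipschitz map $G_k$ per dyadic scale $2^{-k}$, $k=0,\dots,n$, into a fixed-dimensional space, and combine them as $G=(G_0,\dots,G_n)$. The components are placed in a bounded number of blocks of coordinates, with scales $k\equiv j \pmod m$ summed into block $j$ for a fixed $m$. Summing within blocks keeps the target dimension $L$ independent of $n$. The $\sqrt n$ loss comes from the upper Lipschitz bound: each $G_k$ is $O(1)$-Lipschitz, so a naive sum gives $O(n)$, but the squared sum gives $|G(x)-G(y)|\lesssim \sqrt{n}\,d(x,y)$. The lower bound will be shown to hold with constant independent of $n$.

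\textbf{Construction of the scale-$k$ map.} We use Lemma~\ref{projection} and the projection $\pi_{n,k}$. At distances $d(x,y)\approx 2^{-k}$, $\mathbb{M}_n$ is comparable to $\mathbb{M}_k$ up to additive error $2\cdot 2^{-k}$, and it is in fact comparable to $\mathbb{M}_{k'}$ for $k'$ slightly less than $k$. A ball of radius $C2^{-k}$ in $\mathbb{M}_{k}$ meets only boundedly many slits of size $\gtrsim 2^{-k}$. Away from slits of smaller size, it is essentially a bounded piece of the plane with finitely many cuts of comparable length, and it bi-Lipschitz embeds into $\mathbb{R}^{N_0}$ with uniform constant after rescaling. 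To get this uniform embedding, we cover the relevant region by two pieces on which the slit is not a cut (for instance its left and right halves, each convex-like and bi-Lipschitz to planar domains), and glue them with Lemma~\ref{gluing}. We then take a $2^{-k}$-net $N_k$ in $\mathbb{M}_n$ and colour it with boundedly many colours, using doubling, so that same-coloured points are $\ge C_12^{-k}$ apart. For each net point $z$ we take a bump $\phi_z$ that is $2^{k}$-Lipschitz, equals $1$ on $B(z,2^{-k})$, and is supported in $B(z,2\cdot 2^{-k})$. Finally we set $G_k(x)=\sum_z \phi_z(x)\,2^{-k}\big(F_z(\pi_{n,k}(x)) , e_{c(z)}\big)$, where $F_z$ is the local rescaled uniform embedding near $z$ and $e_{c}$ are orthonormal colour vectors. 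The map is Lipschitz with constant $O(1)$.

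\textbf{Lower bound.} Given $x,y$ with $2^{-k-1}<d(x,y)\le 2^{-k}$, take the scale $k'=k-s$ for a fixed $s$. Then $x,y$ lie in the support of a common bump at scale $k'$ with $\phi_z=1$ at both points. Here Lemma~\ref{projection} together with the local embedding $F_z$ gives $|G_{k'}(x)-G_{k'}(y)|\gtrsim d(x,y)$: the additive error $2\cdot 2^{-k'}$ is why we use the local embedding of a finer-than-$\mathbb{M}_{k'}$ piece, and $s$ is chosen to absorb constants. We must also ensure that other scales in the same block do not cancel this contribution. We use the standard trick of taking $m$ large so that scales $k''>k'$ in the block contribute at most $\sum_{j\ge1}C2^{-(k'+jm)}\cdot$ small, and scales $k''<k'$ contribute at most $\sum_j 2^{k''}d(x,y)^2$-type terms via smoothness; bumps that are smooth at coarser scales give $O(2^{k''}d^2)$ bounds. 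Both sums are $\le \frac12$ of the main term for $m$ large.

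\textbf{Main obstacle.} The hard step is a uniform local embedding $F_z$, with constants independent of $k$ and $n$, of a $C2^{-k}$-ball of $\mathbb{M}_n$, which correctly separates points on opposite sides of the slits at scales near $k$. The rest is the standard Assouad bookkeeping. For this step we will first try to use the fact that, after rescaling, the ball's coarse geometry is one of finitely many configurations of $\mathbb{M}_{s}$-pieces, each a compact finite-slit domain embeddable by gluing planar pieces via Lemma~\ref{gluing}. Then $\sup_z \operatorname{dist}(F_z)$ is bounded by the maximum over finitely many configurations.
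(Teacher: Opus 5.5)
\textbf{Verdict.} There is a genuine gap, and it lies in how you combine the scales. The local ingredients agree with the paper: uniformly bi-Lipschitz maps of balls of radius a fixed multiple of $2^{-k}$ in $\mathbb{M}_k$, obtained by cutting into rectangles and gluing with Lemma~\ref{gluing}, composed with $\pi_{n,k}$ and spread over a coloured net.

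\textbf{The block summation fails.} Summing the scales $k\equiv j$ mod $m$ into one block of coordinates breaks both bounds.

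\emph{Upper bound.} Pythagoras only acts across the $m$ blocks. Inside a block, roughly $n/m$ increments $G_{k''}(x)-G_{k''}(y)$, each of size up to $C\,d(x,y)$, simply add. So your estimates only give $|G(x)-G(y)|\lesssim (n/\sqrt m)\,d(x,y)$, not $\sqrt n\, d(x,y)$.

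\emph{Lower bound.} Your claim that a coarser scale $k''<k'$ contributes $O(2^{k''}d(x,y)^2)$ is false. On a plateau $\{\phi_z=1\}$, the colour-$c(z)$ part of $G_{k''}$ is the rescaled local embedding $2^{-k''}F_z\circ\pi_{n,k''}$. Its increment is comparable to $d_{\mathbb{M}_{k''}}(\pi_{n,k''}x,\pi_{n,k''}y)$, and this equals $d(x,y)$ when, for example, the two points are joined by a horizontal segment that misses all slits. Smoothness of the bumps would only control the deviation from a first-order term, never the first-order term itself; moreover $F_z$ and $\pi_{n,k''}$ are only Lipschitz. So the main term shares its block with up to $n/m$ terms of size $\asymp d(x,y)$ pointing in uncontrolled directions, and nothing prevents cancellation.

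The claimed estimate in fact proves too much. Combine it with the bound $|G_{k''}(x)-G_{k''}(y)|\lesssim 2^{-k''}$ you use at finer scales, and you get $|G(x)-G(y)|\lesssim d(x,y)$ with a constant independent of $n$. That would give embeddings of all $\mathbb{M}_n$ with uniformly bounded distortion, contradicting Theorem~\ref{lowerbound}. This is exactly why Assouad's mod-$m$ trick needs a snowflake: the scale weights make the coarse-scale sum geometric, and that decay disappears at exponent $1$.

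\textbf{The repair, and what it costs.} The fix is the paper's construction: give each pair (scale, colour) its own orthogonal coordinates, $G=(g_{S^j_i}\circ\pi_{n,j})_{i\le K,\,j\le n}$. Then the upper bound $\sqrt{Kn}\,c$ is Pythagoras, and the lower bound is read off from a single coordinate block with nothing to cancel. The price is a target dimension linear in $n$; the paper's proof indeed takes $L=(n-1)K$. So the $n$-independence of $L$ asserted in the statement is not delivered by the paper's argument either, and your block trick cannot supply it.

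\textbf{A second, fixable slip.} For the lower bound, as written, you use the coarser level $k'=k-s$. But $\pi_{n,k'}$ identifies the two sides of every slit of level greater than $k'$. For instance, the two copies of the midpoint of a level-$k$ slit are at distance $2^{-k-1}$ in $\mathbb{M}_n$ yet have the same image, so $G_{k'}(x)=G_{k'}(y)$.

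The projection level must be finer than $d(x,y)$, namely $2^{-k'}\le d(x,y)/4$ as in Lemma~\ref{projection}. At the same time, the plateau radius must be a fixed large multiple of $2^{-k'}$ so that both points lie in one ball. The paper arranges this with nets at scale $2^{-l-2}$, balls of radius $2^{5-l}$, and $2^{2-l}\le d(x,y)\le 2^{3-l}$.
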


\begin{proof} 
	
	Let $K\in \N$ be determined and let $L=(n-1)K$. We will construct a mapping $ G:\mathbb{M}_{n} \to \mathbb{R}^{L}$ in the form
	\begin{align*}
		G(x)=(g_{S_i^j} \circ \pi_{n,j})_{i=1,\dots, K, j=1, \dots n}.
	\end{align*}
	 The mappings $\pi_{n,i}$ in the above are the projections defined in Subsection ~\ref{subsec:def_slit_carp} and the following proof first focuses on the construction of the $c$-Lipschitz mappings $g_{S_k^i}:\mathbb{M}_i\to \R^{n_i}$. After this, it is immediate that the map $G$ is $\sqrt{Kn}c$-Lipschitz, and thus it suffices to bound $K$ and $c$, as well as to give a lower bound for $|G(x)-G(y)|$ for $x,y\in \mathbb{M}_n$.
	
	Fix $i\in \N$. First, take a $ \frac{1}{2^{i+2}} $-net $N_i \subset \mathbb{M}_i$ for every $ i=1,...,n $. Since $\mathbb{M}_i$ is doubling, $N_i$ can be partitioned into $K$ parts denoted by $ S_k^i $ ($k=1,...,K$) such that the following property holds: if $a,b \in S_k^i$ with $a\neq b$, then $d_{\mathbb{M}_i}(a,b)\geq \frac{1}{2^{i-7}}$. By the properties of a net, if $ x\in\mathbb{M}_i $, then there exists $ q \in S_k^i $ for some $ 1\leq k \leq K $ such that $ d_{\mathbb{M}_i}(x, q)<\frac{1}{2^{i+2}} $.
    This partitioning is obtained by a standard coloring argument; cf. \cite[p. 101]{heinonen2001lectures}. We recall the main steps here, and refer to there for more details. Form a graph $G_i$ whose vertices are $N_i$ and attach an edge between two vertices if their distance is at most $2^{7-i}$. This graph has degree at most a constant $D$ depending on the doubling constant. Then, employ a greedy coloring algorithm of assigning to each vertex, in order, a color different from all of its neighbors (whose colors were already assigned). Greedy coloring uses at most $K=D+1$ colors. This way no two vertices of the same color share an edge and for the colors $k=1,\dots, K$ collect to $S_k^i$ each vertex with color $k$.

	Notice that for any $x\in \mathbb{M}_i$, the set $B\left(x,\frac{1}{2^{i-5}}\right)$ intersects at most $M<\infty$ slits in $\mathbb{M}_i$ with $M\leq 2^{13}$. Indeed, this ball is contained in the union of at most $2^{12}$ dyadic squares of side length $2^{-i}$ and each such dyadic square can touch at most $2$ slits. Thus, we can divide $B\left(x,\frac{1}{2^{i-5}}\right) $ by vertical lines into $M+1$ pieces each of which is isometric to a rectangle in $\mathbb{R}^2$. Therefore, given any $ q_m \in N_i $, application of Lemma \ref{gluing} $M$ times shows that there exists a $ (1,c)  $-bi-Lipschitz embedding $ f_{q_m}:B\left(q_m,\frac{1}{2^{i-5}}\right)  \to \mathbb{R}^{L} $ such that $f_{q_{m}}(q_m)=0$ (we can translate the mapping if necessary). The same $c$ holds for every net $N_i$ and every $q_m \in N_i$, since $M$ holds uniformly for every $i$. 
	
	Then for each $k=1,...,K$ we define a map
	\begin{align*}
		g_{S_k^i}:\bigcup_{q_j \in S_k^i}B\left(q_j,\frac{1}{2^{i-5}}\right)\to \mathbb{R}^{L}
	\end{align*}
	such that $ g_{S_k^i}\big|_{B\left(q_j,\frac{1}{2^{i}}\right)}=f_{q_{j}} $ for every $ q_j \in S_k^i $. Notice that since $d_{\mathbb{M}_i}(q_j,q_l)\geq 2^{7-i}$ for every $j\neq l$  the different balls in this definition are disjoint and we have that the map $g_{S_k^i}$ is well defined. Since each $f_{q_{j}}$ is $(1,c)$-bi-Lipschitz,  $g_{S_k^i}$ is also $(1,c)$-bi-Lipschitz on $B\left(q_j,\frac{1}{2^{i}}\right)$ for every $q_j \in S_k^i$. 
	
	To see that $g_{S_k^i}$ is $2c$-Lipschitz in its whole domain, pick $x,y \in \mathbb{M}_i \cap \bigcup_{q_j \in S_k^i}B\left(q_j,\frac{1}{2^{i-5}}\right) $ such that there exists $ q_m\neq q_l $  such that $x \in B\left(q_m,\frac{1}{2^{i-5}}\right)$ and $y \in B\left(q_l,\frac{1}{2^{i-5}}\right)$. Since $d_{\mathbb{M}_i}(q_m,q_l)\geq \frac{1}{2^{i-7}}$ we get $ d_{\mathbb{M}_i}(y,x)\geq \frac{1}{2^{i-5}} $. Using this information and recalling that $f_{q_{m}}(q_m)=0$, the triangle inequality gives us
	\begin{align*}
		|g_{S_k^i}(x)-g_{S_k^i}(y)|&=|f_{q_{m}}(x)-f_{q_{l}}(y)|\leq|f_{q_{m}}(x)-f_{q_{m}}(q_{m})|+|f_{q_{l}}(q_{l})-f_{q_{l}}(y)|\\
		&\leq cd_{\mathbb{M}_i}(x,q_{m}) + c d_{\mathbb{M}_i}(y,q_{l}) \leq 2^{6-i}c\\
		&\leq 2cd_{\mathbb{M}_i}(x,y).
	\end{align*}
	Thus, each $g_{S_k^i}$ is $c$-Lipschitz on $ \mathbb{M}_i \cap \bigcup_{q_j \in S_k^i}B\left(q_j,\frac{1}{2^{i-5}}\right)$. Next, for each $ i=1,...,n $ and $ k=1,...,4^{i-3} $ we extend $ g_{S_k^i} $ using McShane's theorem \cite{MR1562984} to get $c_1$-Lipschitz maps defined on the whole space $ \mathbb{M}_i $. We keep denoting these extensions with the same symbol $ g_{S_k^i} $.  

    With the mappings constructed, we proceed to show a lower bound for $|G(x)-G(y)|$. Now pick $x,y \in \mathbb{M}_n$. Let $z \in \mathbb{M}_n$ be a point which satisfies $\frac{d_{\mathbb{M}_n}(x,y)}{2}=d_{\mathbb{M}_n}(z,y)=d_{\mathbb{M}_n}(z,x)$ and let $ l=2,\dots$ be such that $ 2^{2-l}\leq d_{\mathbb{M}_n}(x,y)\leq2^{3-l} $. 
	
	Suppose first that $l<n$. The condition $2^{2-l}\leq d_{\mathbb{M}_n}(x,y)$ and Lemma ~\ref{projection} gives
	
	\[ 2^{-2}\,d_{\mathbb{M}_n}(x,y)  \leq d_{\mathbb{M}_l}\big(\pi_{n,l}(x),\pi_{n,l}(y)\big)\leq d_{\mathbb{M}_n}(x,y)\leq 2^{3-l}. \]
	Since $N_l$ is a $2^{-i-2}$-net, there exists $q \in S_k^l$ such that $\pi_{n,l}(z) \in B\left(q,\frac{1}{2^{l+2}} \right)$, for some $k$, and consequently $\pi_{n,l}(x),\pi_{n,l}(y) \in B\left(q,\frac{1}{2^{l-5}}\right) $. Since $g_{S_k^l}$ is $(1,c)$-bi-Lipschitz on $B\left(q,\frac{1}{2^{l-5}}\right) \subset \mathbb{M}_l$, we have the lower bound
	\[ 
	2^{-2}\,d_{\mathbb{M}_n}(x,y)\leq |g_{S_k^l}(\pi_{n,l}(x)) - g_{S_k^l}(\pi_{n,l}(y))|\leq |G(x)-G(y)|.
	\]

	On the other hand, if $l \geq n$, then by similar argumentation as above, there exists $q \in S_k^n$, for some $k$, such that $z \in B\left(q,\frac{1}{2^{n+2}}\right)$ and $x,y \in B\left(q,\frac{1}{2^{5-n}}\right)$. Recall that $g_{S_k^n}$ is $(1,c)$-bi-Lipschitz on $B\left(q,\frac{1}{2^{5-n}}\right)$, and thus
    \[ 
	\,d_{\mathbb{M}_n}(x,y)\leq |g_{S_k^l}(\pi_{n,l}(x)) - g_{S_k^l}(\pi_{n,l}(y))|\leq |G(x)-G(y)|.
	\]
    This completes the proof of the lower bound and shows that $G$ is $(2^{-2},\sqrt{Kn}c)$-bi-Lipschitz, which yields the desired claim by scaling the map by $4$.
	
\end{proof}

	Now we have shown that $ c_{(\mathbb{M}_n,\ell_2)} \leq C\sqrt{n}$, for some uniformly holding fixed $ C $.  Notice that $c_{(\mathbb{M}_n,\ell_2)}$ must be increasing in $n$, and in the next section we will quantify this increasing behavior from below.
	
	\begin{lemma}\label{incdis}
		The optimal bi-Lipschitz distortion of $ \mathbb{M}_n $ is increasing in $ n $.
	\end{lemma}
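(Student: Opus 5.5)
The plan is to show that $\mathbb{M}_n$ embeds isometrically into $\mathbb{M}_{n+1}$, after which monotonicity is immediate. Indeed, if $\iota:\mathbb{M}_n\to\mathbb{M}_{n+1}$ is an isometric embedding and $f:\mathbb{M}_{n+1}\to\ell_2$ is $(r,Cr)$-bi-Lipschitz, then $f\circ\iota$ is $(r,Cr)$-bi-Lipschitz on $\mathbb{M}_n$. Hence $\dist(f\circ\iota)\le \dist(f)$, and taking the infimum over $f$ gives $c_{(\mathbb{M}_n,\ell_2)}\le c_{(\mathbb{M}_{n+1},\ell_2)}$.

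To build $\iota$ we use self-similarity. Let $Q=Q^1_{0,0}=[0,\tfrac12]^2$ and consider the scaling map $\phi(x)=x/2$, which sends $[0,1]^2$ onto $Q$. By construction:
\begin{itemize}
\item the level-$k$ slits $s^k_{l,k'}$ are mapped to the level-$(k+1)$ slits lying inside $Q$;
\item the level-$0$ slit is mapped to the level-$1$ slit $s^1_{0,0}$.
\end{itemize}
Consequently $\phi(M_n)=M_{n+1}\cap \operatorname{int}$-part of $Q$, up to boundary points. In other words, the part of $\mathbb{M}_{n+1}$ lying over $Q$, with its intrinsic path metric, is isometric to $\mathbb{M}_n$ scaled by $\tfrac12$.

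The main step is to check that the metric of $\mathbb{M}_{n+1}$ restricted to the piece $\mathbb{M}_{n+1}|_Q:=\pi_{n+1,0}^{-1}(Q)$ agrees with the intrinsic path metric of that piece. In fact we only need one inequality, namely that $d_{\mathbb{M}_{n+1}}$ restricted to the piece is at least the intrinsic metric; the reverse inequality is trivial. We argue as follows.
\begin{itemize}
\item Take a path in $\mathbb{M}_{n+1}$ between two points of the piece that leaves $Q$. It leaves and re-enters through $\partial Q$.
\item The boundary $\partial Q$ contains no slits. The slits at each level are interior to their dyadic squares (they sit at horizontal offset half the side length, away from the top and bottom edges), so $\partial Q$ lifts isometrically into $\mathbb{M}_{n+1}$.
\item Any excursion outside $Q$ between two boundary points $a,b$ has length at least $|a-b|$.
\item The straight segment along $\partial Q$ (or through the grid boundary) from $a$ to $b$ is a legitimate path inside the piece, because $\partial Q$ is convex relative to the boundary of the square.
\end{itemize}

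One subtlety is that two points on $\partial Q$ may lie on different sides of $Q$. The straight segment $[a,b]$ then crosses the interior of $Q$ and could hit slits. The fix is to replace any excursion outside $Q$ by its projection onto $Q$ via the nearest-point retraction $x\mapsto(\min(x_1,\tfrac12),\min(x_2,\tfrac12))$. This retraction is $1$-Lipschitz for the Euclidean and taxicab metrics, and it maps the region outside $Q$ into $\partial Q$, which is slit-free. So the retracted path stays in the piece, avoids all slits, and is no longer than the original.

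We expect the main obstacle to be making this retraction argument rigorous on the completions. Points of $\mathbb{M}_{n+1}$ over slits are doubled, so we apply the retraction to paths in $M_{n+1}$, which is the open set before completion, and then pass to the limit. This yields $\iota=\phi$ extended to completions, scaled by $2$. Scaling does not affect distortion, which finishes the proof.
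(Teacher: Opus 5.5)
Your route is the same as the paper's: restrict an embedding of $\mathbb{M}_{n+1}$ to a level-$1$ quadrant, identify the quadrant with a scaled copy of $\mathbb{M}_n$, and use scale invariance of distortion. The paper only asserts that identification, and the gap is in your verification of it: the claim ``$\partial Q$ contains no slits'' is false in the convention you use.

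For $\phi$ to carry slits onto slits you need a level-$0$ slit $\{1/2\}\times[1/4,3/4]$. Its lower half $\{1/2\}\times[1/4,1/2]$ lies on the right edge of $Q=[0,1/2]^2$, and the same happens for every level-$1$ quadrant. Your justification only shows that each slit is interior to its \emph{own} dyadic square, but every level-$k$ slit lies on the common edge of two level-$(k+1)$ squares. With the literal definition $M_0=[0,1]^2$ the boundary of $Q$ is slit-free, but then the level-$1$ slit of $Q$ has no preimage under $\phi$ and the self-similarity fails instead. So your two claims cannot both hold.

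Two steps break as written. First, $\pi_{n+1,0}^{-1}(Q)$ also contains the right-hand copies of the points of $\{1/2\}\times(1/4,1/2]$, a segment hanging off $(1/2,1/4)$, so it is not a scaled copy of $\mathbb{M}_n$. Second, $r\circ\gamma$ runs along this half-slit whenever $\gamma$ is in $\{x>1/2\}$ at a height in $[1/4,1/2]$, so ``the retracted path avoids all slits'' is false.

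The repair is short. Define the piece as the image of $\iota$, i.e.\ the closure of $\pi_{n+1,0}^{-1}(\operatorname{int} Q)$; this uses only the left-hand copies along $x=1/2$. For the nontrivial inequality, compare inside $\mathbb{M}_n$ rather than inside $\mathbb{M}_{n+1}$. Let $\gamma$ be a path in $M_{n+1}$ joining $\phi(x)$ and $\phi(y)$. Then $r\circ\gamma$ meets no slit of level $\ge 1$, since none of them touches the top or right edge of $Q$. Its only contact with slits is along $\{1/2\}\times[1/4,1/2]$, which $\phi^{-1}$ sends to the edge $\{1\}\times[1/2,1]$ of the unit square, where $\mathbb{M}_n$ has no slit. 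Hence $\phi^{-1}\circ r\circ\gamma$ is an admissible path in $M_n$ of length at most $2\ell(\gamma)$. This gives $d_{\mathbb{M}_n}(x,y)\le 2\,d_{\mathbb{M}_{n+1}}(\phi(x),\phi(y))$ for $\phi(x),\phi(y)\in M_{n+1}$, and then for all points by density. The reverse inequality holds because a path in $M_n$ maps to a path of half the length that at worst runs along the left side of the level-$0$ slit, which is available in the completion. With this fix your argument is complete, and it supplies the justification that the paper's one-line proof leaves out.
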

	\begin{proof}

		Let $f$ be an arbitrary bi-Lipschitz Euclidean embedding of $\mathbb{M}_{n+1}$ and set $A:=\left[ \frac{1}{2},1\right] \times\left[ \frac{1}{2},1\right] \cap \mathbb{M}_{n+1} $. We have $ \dist(f|_A)\leq\dist(f) $, where $f|_A$ is the restriction of $f$ to $A$. But $A$ is just a scaled version of $\mathbb{M}_n$ and distortion is scale invariant. Thus, $c_{(\mathbb{M}_n,\ell_2)}\leq c_{(\mathbb{M}_n,\ell_2)}$ since distortion decreases upon restriction.
		
	\end{proof}

	\section{Lower bound for the Euclidean distortion in Theorem \ref{theorem1}}
	\noindent The Laakso graph shares several structural properties with the standard slit carpet; see Figure~\ref{fig:laakso} below. There are several known proofs of lower bounds in the Laakso graph setting; see, for instance, Lang and Plaut \cite[Theorem 2.3]{lang2001bilipschitz}, where a lower bound of $\sqrt{n}$ is obtained. Our approach follows the same general strategy. The Laakso construction is simpler, since the diamonds are nested across scales. In contrast, in the standard slit carpet the corresponding slits are not nested but lie adjacent to one another, and this lack of nesting constitutes the main technical difficulty.
	
	\begin{figure}[h]
		\centering
	\def\svgwidth{0.5\linewidth}
	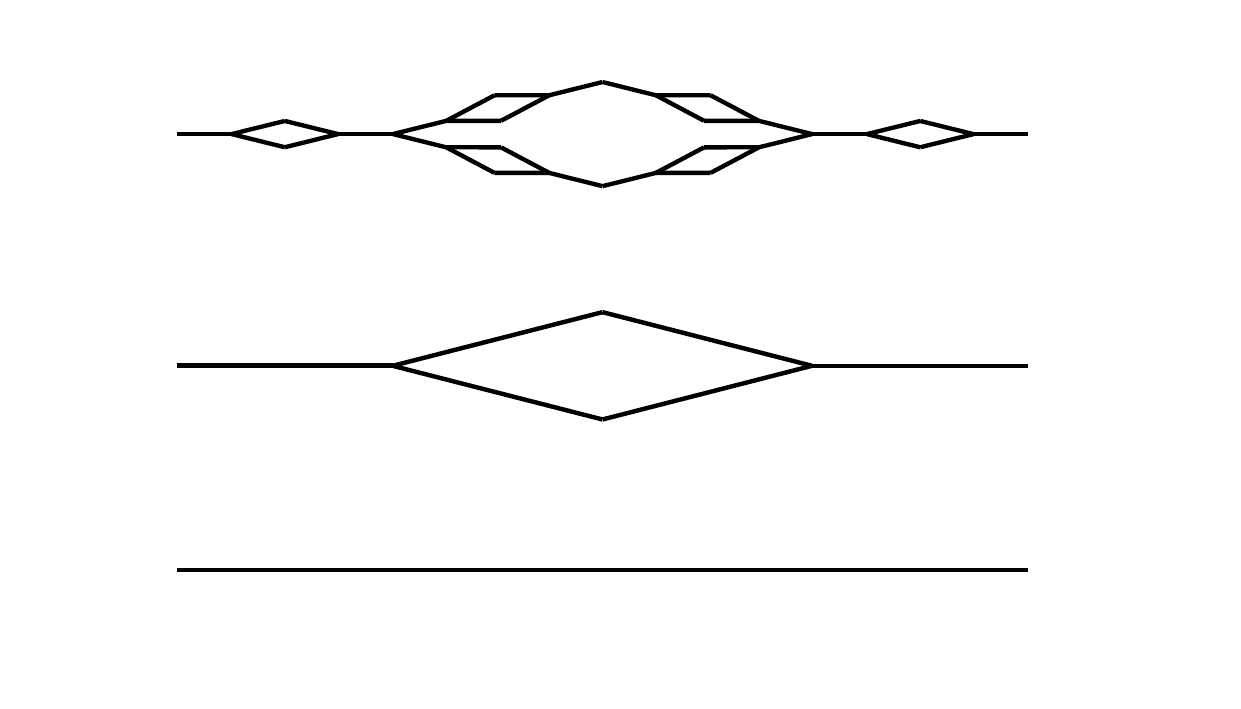

		\caption{First three iterations of the Laakso graph.}
		\label{fig:laakso}
	\end{figure}

	To address this issue, we introduce an argument that forces the analysis to skip multiple scales. For $\mathbb{M}_n$, this requires skipping on the order of $\log n$ scales. While this step is essential for the argument to go through, it necessarily weakens the resulting estimate. As a consequence, we are unable to recover the $\sqrt{n}$ lower bound known for Laakso graphs and instead obtain a weaker lower bound for the optimal distortion $c(\mathbb{M}_n,\ell_2)$.
	
	At present, we do not know whether this bound is sharp. It is plausible that it can be improved, possibly by iterated logarithmic factors. The scale--skipping argument may be viewed as a quantitative refinement of the qualitative techniques developed in \cite{seb2020regular}.
	
	\begin{theorem}\label{lowerbound}
		For any bi-Lipschitz embedding $f: \mathbb{M}_n \to \mathbb{R}^K$, we have $ \dist(f) \geq C \sqrt{\frac{n}{\log(n)}}$, where $C$ is a constant independent of $n$.
	\end{theorem}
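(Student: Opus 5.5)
The plan is a Lang--Plaut style ``energy'' iteration in the spirit of the Laakso graph argument, adapted to the non-nested slits by skipping scales. Fix a $1$-Lipschitz embedding $f:\mathbb{M}_n\to\R^K$ with lower bound $|f(x)-f(y)|\ge D^{-1}d(x,y)$. Since $\mathbb{M}_n$ is not discrete, we replace the embedding by its restriction to a suitable grid. For each scale $m$ we consider the family $\mathcal{V}_m$ of vertical segments of the dyadic grid of level $m$, each split at level-$m$ dyadic points into consecutive pairs of edges. The key quantity is the averaged squared ``non-linearity'' of $f$ along these vertical paths:
\[
E_m:=\mathbb{E}\big[\,|f(p)-f(q)|^2/d(p,q)^2\,\big],
\]
the average over pairs of consecutive level-$m$ points $p,q$ on vertical lines. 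Note $E_m\le 1$, and $E_m\ge D^{-2}$ always. Hilbert-space parallelogram identities give, for three equally spaced points $p,z,q$ on a vertical segment,
\[
\tfrac{1}{2}\big(|f(p)-f(z)|^2+|f(z)-f(q)|^2\big)=\tfrac14|f(p)-f(q)|^2+\big|f(z)-\tfrac{f(p)+f(q)}2\big|^2 .
\]
After normalization, this says $E_{m+1}\ge E_m+(\text{midpoint defect})^2$. Iterating over scales $m_0<m_1<\dots$ therefore yields $1\ge \sum_j \delta_{m_j}^2$, where $\delta_{m}$ is an averaged normalized midpoint defect at scale $m$.

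The core step is a quantitative dichotomy, which upgrades the qualitative argument of \cite{seb2020regular}. Take a dyadic square $Q$ of level $m$, its slit $s$, and the two vertical paths through $b$ and $t$ running along the left and right sides of $s$. In $\mathbb{M}_n$ the left and right copies $z_L,z_R$ of the slit midpoint satisfy $d(z_L,z_R)\gtrsim 2^{-m}$, so $|f(z_L)-f(z_R)|\ge c D^{-1}2^{-m}$. On the other hand, the endpoints $b,t$ are shared by both paths. So if $f$ were nearly affine along both paths with the same endpoints, we would get $f(z_L)\approx f(z_R)\approx \frac{f(b)+f(t)}2$. Hence at least one of the two midpoints has defect $\gtrsim D^{-1}2^{-m}$. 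The lemma I aim for is therefore that at every scale $m$ and every square, the defect is $\gtrsim D^{-1}$ for a definite fraction of the vertical paths, giving $\delta_m^2\gtrsim D^{-2}$.

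The obstacle is that this is not yet compatible with the parallelogram recursion. The slit at level $m$ sits adjacent to, not nested in, the segments of level $m-1$, and the paths along the slit sides are not the grid paths used in $E_{m-1}$. So the defect must be transferred to a statement about the fixed vertical segments. I would handle this by comparing each slit-side path with nearby vertical lines at horizontal distance $2^{-m-k}$, which are nearly straight vertical paths in $\mathbb{M}_n$ up to detours around smaller slits. The error in doing so is of order $2^{-k}$ times the scale, and it must be $\ll D^{-1}$; this forces $k\approx \log_2 D\lesssim \log n$, using $D\lesssim\sqrt n$ from Theorem~\ref{upper_bound}. Consequently we can only use scales $m_j=jk$, about $n/\log n$ of them. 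Each contributes $\gtrsim D^{-2}$ to the telescoping sum, so
\[
1\ge c\,\frac{n}{\log n}\,D^{-2},
\]
which gives $D\ge C\sqrt{n/\log n}$.

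The main technical difficulty will be making the averaging rigorous. One must check that the midpoint defects of the slit paths at scale $m_j$ dominate, on average, the parallelogram gains between $E_{m_j}$ and $E_{m_{j+1}}$, with all comparison errors summable. This is where I expect most of the work.
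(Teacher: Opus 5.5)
Your plan has a genuine gap, and it sits in the step you defer as ``making the averaging rigorous''. First, the recursion $E_{m+1}\ge E_m+\delta_m^2$ holds only line by line. If $E_m$ averages over the level-$m$ grid lines, then half of the level-$(m+1)$ grid lines are new: they are precisely the lines $x=(2c+1)2^{-m-1}$ carrying the level-$m$ slits, and nothing relates their energy to $E_m$. So you must fix a probability measure $\nu$ on non-dyadic vertical lines once and for all and subdivide dyadically along each line. This gives $\sum_m\int\Delta_m\,d\nu\le 1$, where $\Delta_m(t)$ is the averaged normalized squared midpoint defect on $\{x=t\}$. Second, and more seriously, for a fixed family of lines your dichotomy produces a defect $\gtrsim D^{-1}$ only on lines within horizontal distance about $D^{-1}2^{-m}$ of a level-$m$ slit (your $2^{-m-k}$), and only on one, unknown, side of it; nothing forces a defect on the remaining lines. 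With uniform weights these lines have mass $\sim D^{-1}$. Each scale is then only guaranteed to contribute $\gtrsim D^{-3}$, not $D^{-2}$, and the argument yields merely $D\gtrsim n^{1/3}$. The assertion that the defect is $\gtrsim D^{-1}$ ``for a definite fraction of the vertical paths'' is therefore the missing idea, not a technicality.

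The paper avoids averaging altogether by tracking the worst-case stretch $L_{k,n}$ over pairs of bottom/top midpoints of level-$k$ squares. Take an extremal pair $y,y'$ at scale $lj$ with slit copies $m,m'$. The four-point parallelogram inequality and $|f(m)-f(m')|\ge d(m,m')$ show that one side, say $(y,m)$, is stretched by $\sqrt{L_{lj,n}^2+1/4}$. This pair is moved horizontally by $2^{-(l+1)j-1}$ onto a line of level-$(l+1)j$ midpoints at relative cost $L_X2^{2-j}$, and subdividing gives $L_{(l+1)j,n}^2\ge L_{lj,n}^2+1/8$. Because the extremal pair is re-chosen at every step, there is no dilution by lines far from slits. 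The $\log n$ skip is exactly what makes $L_XL_{lj,n}2^{-j}\lesssim 1$, after first disposing of the case that $L_X$ or some $L_{l,n}$ exceeds $\sqrt n/16$. Do not invoke Theorem~\ref{upper_bound} for this: it concerns the optimal embedding, not your $f$.

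If you prefer to keep your energy argument, it can be repaired with a lacunary $\nu$ in the spirit of Lemma~\ref{lem:measure}. Let the binary digits of $t$ come in independent blocks of length $s$, each equal, with probability $1/2$, to a $1$ followed by $s-1$ zeros or a $0$ followed by $s-1$ ones, where $2^{-s}\approx D^{-1}/16$; then average over the $s$ possible phases. At every scale $m$ a $\nu$-mass $\ge 1/s$ of lines lies within $2^{-m-s}$ of the slit line of its level-$m$ column, on either side with conditional probability $1/2$. Hence $\int\Delta_m\,d\nu\gtrsim D^{-2}/s$ and $nD^{-2}\lesssim\log D$; the factor $1/s$ is where the logarithm reappears. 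In either version, use the triple (bottom midpoint, slit midpoint, top midpoint) of the square rather than $(b,z,t)$. The latter straddles two dyadic intervals and does not enter the dyadic telescoping.
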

	\begin{proof}
	Let $f:\mathbb{M}_n\to \R^K$ be $(1,D)$-bi-Lipschitz, and for $k \geq 0 $ define
	\[
	L_{k,n} := \sup_{y,y' \in \mathcal{V}_{k,n}} \frac{|f(y)-f(y')|}{d_{\mathbb{M}_n}(y,y')},
	\]
	where 
	\begin{align*}
		\mathcal{V}_{k,n}
		:= \Big\{ (y,y') \in \mathbb{M}_n \times \mathbb{M}_n :\ 
		&\exists\,  m \in \{0,\dots,2^{k}-1\},\ r \in \{0,\dots,2^{k}-1\}\ \text{such that} \\[4pt]
		&\pi_{n,0}(y) = \Big( \tfrac{2m+1}{2^{k+1}},\, r2^{-k} \Big),\quad 
		\pi_{n,0}(y') = \Big( \tfrac{2m+1}{2^{k+1}},\, (r+1)2^{-k} \Big)
		\Big\}.
	\end{align*}
	This definition means that the set $\mathcal{V}_{k,n}$ contains every pair of middle points of the top and bottom of level $k$ dyadic squares in $[0,1]^2$. 
	Define
	\[
	L_x := \sup_{x,x'\in \mathcal{H}_n} \frac{|f(x) -f(x')|}{d_{\mathbb{M}_n}(x,x')},
	\]
	where $\mathcal{H}_n$ is the collection of $(x,x') \in \mathbb{M}_n \times \mathbb{M}_n$ s.t. there exists a $\Delta\in \R$ with $\pi_{n,0}(x') = \pi_{n,0}(x) + \Delta e_1$ and the segment $[\pi_{n,0}(x),\pi_{n,0}(x')]$ does not intersect the interior of a slit.
    Here $e_1=(1,0)$ is the standard unit basis vector. See Figure ~\ref{fig:lipdis} for an illustration of these points.
	
	\begin{figure}[h]
		\centering
	\def\svgwidth{0.5\linewidth}
	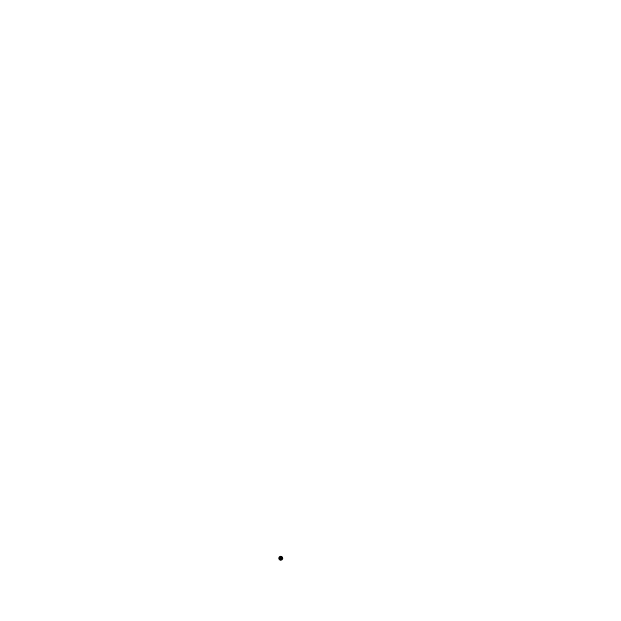

		\caption{Illustration of $L_x$ and $L_{k,n}$.}
		\label{fig:lipdis}
	\end{figure}

	We will prove a lower bound of the form $C\sqrt{n/\log(n)}$ for $L_{l,n}$ or $L_X$ for some $l=1,\dots, n$. This gives a lower bound for $D$ and since $f$ is arbitrary, for the distortion of any bi-Lipschitz embedding. If $L_X \geq \sqrt{n}/16$ or $L_{l,n} \geq \sqrt{n}/16$ for some $l=1,\dots, n$, we are done. Thus, we may assume $L_X,L_{l,n} < \sqrt{n}$ for all $l=1,\dots, n$. Let $j=\lceil \log_2(n) \rceil$ and $k= \Bigl\lfloor\dfrac{n}{j}\Bigr\rfloor-1$. In the following, we restrict without loss of generality to $n\geq 8$ (in order to ensure $k\geq 2$). Let $l=1,\dots, k$. We will develop an estimate for $L_{(l+1)j}$ in terms of $L_{lj}$ by employing diamonds formed by slits, somewhat similar to the proof in \cite{lang2001bilipschitz}. 
	 
    	Choose $y,y' \in \mathcal{V}_{lj,n}$ so that $|f(y)- f(y')| = L_{lj,n}d_{\mathbb{M}_n}(y,y') $, and let $m$ and $m'$ be the two copies of the midpoint of the slit between $y$ and $y'$. Let $m$ be the midpoint on the left hand side of the slit. Note that $ d_{\mathbb{M}_n}(m,m')=\frac{1}{2}d_{\mathbb{M}_n}(y,y') $. We first do some preliminary estimates. 
	
	   By applying dihedral symmetries, we may assume $$|f(y)-f(m)|=\max\{|f(y)-f(m)|,|f(m)-f(y')|,|f(y')-f(m')|,|f(m')-f(y)|\}. $$ Then by the parallelogram law 
	\begin{align*}
		L_{lj,n}^2d_{\mathbb{M}_n}(y,m)^2 + \frac{1}{4}d_{\mathbb{M}_n}(y,m)^2&=L_{lj,n}^2\frac{1}{4}d_{\mathbb{M}_n}(y,y')^2 + \frac{1}{16}d_{\mathbb{M}_n}(y,y')^2\\
		&=\frac{1}{4} \left( L_{lj,n}^2d_{\mathbb{M}_n}(y,y')^2 +d_{\mathbb{M}_n}(m,m')^2 \right)\\
		& \leq \frac{1}{4} \left( |f(y)-f(y')|^2 + |f(m)-f(m')|^2 \right)\\
		& \leq \frac{1}{4} \left( |f(y)-f(m)|^2 + |f(m)-f(y')|^2 + |f(y')-f(m')|^2 + |f(m')-f(y)|^2 \right)\\
		& \leq |f(y)-f(m)|^2.
	\end{align*}
    This shows that the pair $(m,y)$ is stretched more than $(y,y')$. We will next shift this pair of points a bit to the left.	Consider $\bar{y}, \bar{m}$ obtained by shifting $y,m$ to the left by $2^{-(l+1)j-1}$; see Figure ~\ref{fig:lipdis}.

	With this choice of points the triangle inequality gives us

	\begin{align*}
		|f(\bar{y})- f(\bar{m})| &\geq |f(y)-f(m)| - L_X (d_{\mathbb{M}_n}(y, \bar{y}) + d_{\mathbb{M}_n}(m, \bar{m})) \\
		&\geq \sqrt{L_{lj,n}^2 + 1/4} d_{\mathbb{M}_n}(y,m) -L_X2^{2-j} d_{\mathbb{M}_n}(y,m) \\
		&\geq (\sqrt{L_{lj,n}^2 + 1/4} - L_X 2^{2-j})d_{\mathbb{M}_n}(\bar{y},\bar{m}).
	\end{align*}

	Next split the vertical segment between $[\bar{y}, \bar{m}]$ into $2^{j-1}$ sub-segments $\{[y_i,y_{i+1}]\}_{i=1}^{2^{j-1}}$, where each pair is contained in $\mathcal{V}_{(l+1)j}$. Since these partition the segment, a simple triangle inequality argument gives that there is at least one index $i$ s.t. 
    \begin{align*}
		|f(x_{i+1})- f(x_i)| &\geq (\sqrt{L_{lj,n}^2 + 1/4} - L_X 2^{2-j})d_{\mathbb{M}_n}(x_{i+1},x_i).
	\end{align*}
    We also have $|f(x_{i+1})- f(x_i)| \leq L_{(l+1)j-1,n}d_{\mathbb{M}_n}(x_{i+1},x_i) $ by the definition of $ L_{(l+1)j,n}$. Squaring these, we get 

	\begin{align*}
		L_{(l+1)j,n}^2 d_{\mathbb{M}_n}(x_{i+1},x_i)^2 \geq |f(x_{i+1})- f(x_i)| &\geq  (\sqrt{L_{lj,n}^2 + 1/4} - L_X 2^{2-j})^2d_{\mathbb{M}_n}(x_{i+1},x_i)^2
	\end{align*}
    Thus,
    \[
    L_{(l+1)j,n}^2 \geq L_{lj,n}^2 + 1/4 -2L_X 2^{2-j}\sqrt{L_{lj,n}^2 + 1/4}.
    \]
    Next, $L_X\leq \sqrt{n}/16$, $2^{-j}\leq  n^{-1}$ and $\sqrt{L_{lj,n}^2 + 1/4}\leq 2L_{lj,n}\leq \sqrt{n}/16$. Thus
	\[
    L_{(l+1)j,n}^2 \geq L_{lj,n}^2 + 1/8.
    \]
	Iterating the above inequality $k$ times we obtain,
	\[
		\frac{k}{8} \leq L_{kj,n}^2\leq D^2
	\]

	and thus $ D\geq \frac{1}{4}\sqrt{k}\geq\frac{1}{8}\sqrt{\frac{n}{j}}\geq\frac{1}{8}\sqrt{\frac{n}{\log_2(n)}}$, which yields the claim.
\end{proof}

Finally, we observe that Theorem \ref{theorem1} now immediately follows from Theorems \ref{upper_bound} and \ref{lowerbound}.

	\section{Embedding the carpet into $L^1$}
	
	\noindent In the previous sections we studied embeddings of $\mathbb{M}_\infty$ into Hilbert space (in particular into $\ell_2$). In this fifth section we prove that the standard slit carpet $\mathbb{M}_\infty$ admits an embedding into $L^1$. Our strategy is to construct a \emph{Lipschitz--light} map from $\mathbb{M}_\infty$ to $\mathbb{R}$ and then invoke the main Cheeger--Kleiner implication which yields an embedding to $L^1$.
	
	\subsection*{Definition of Lipschitz light}
	 Lipschitz light mappings were originally defined in Cheeger--Kleiner \cite{cheegerkleiner}. We adopt the definition of a Lipschitz light map from David \cite[Definition 1.2]{davidliplight}. A discrete path $(p_0,\dots,p_n)$ in a metric space $(X,d)$ is a $\delta$-path if
	\[
	d(p_i,p_{i+1})\le \delta\qquad(0\le i\le n-1).
	\]
	A subset $A\subset X$ is $\delta$-connected if for every $a,b\in A$ there exists a $\delta$-path $(p_0,\dots,p_n)$ with $p_0=a$, $p_n=b$ and $p_k\in A$ for all $k$. A $\delta$-component of $U\subset X$ is a maximal $\delta$-connected subset of $U$.
	
	\begin{definition}\label{def:liplight}
		Let $f:X\to Y$ be a map between metric spaces. We say $f$ is \emph{$C$-Lipschitz--light} if there exists $C\ge1$ such that
		\begin{enumerate}
			\item $f$ is $C$-Lipschitz, and
			\item for every $r>0$ and every $W\subset Y$ with $\diam(W)\le r$, each $r$-component of $f^{-1}(W)$ has diameter $<Cr$.
		\end{enumerate}
	\end{definition}

	\subsection*{Cheeger--Kleiner implication}
	Cheeger and Kleiner showed a key result, which we will apply as a black-box tool:
	
	\begin{theorem}[Cheeger--Kleiner, Theorem 1.2 in \cite{cheegerkleiner}]  \label{thm:ck-main}
		If a metric space $X$ admits a Lipschitz--light map into $\mathbb{R}$, then $X$ admits a bi-Lipschitz embedding into some $L^1$-space.
	\end{theorem}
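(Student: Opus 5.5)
The plan is to reproduce the Cheeger--Kleiner argument: use the Lipschitz--light map $f:X\to\mathbb{R}$ to build a multiscale family of partitions of $X$, and write the metric of $X$, up to constants, as a superposition of cut (pseudo)metrics. A superposition of cut metrics is exactly an $L^1$-metric, so a two-sided comparison of this kind gives the bi-Lipschitz embedding into $L^1$. After rescaling we may assume $f$ is $C$-Lipschitz and light with constant $C$.

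\emph{Step 1: partitions.} For each $k\in\mathbb{Z}$ set $r_k=2^{-k}$ and cover $\mathbb{R}$ by two families of intervals of length $r_k$, the dyadic ones and the ones shifted by $r_k/2$. For each such interval $I$, decompose $f^{-1}(I)$ into its $r_k$-components. By condition (2) of Definition~\ref{def:liplight}, each component has diameter $<Cr_k$. Distinct components of the same preimage are at distance $>r_k$. Pieces at different scales can be made compatible by merging or refining along the dyadic structure of the intervals, so that they form something like an inverse system of graphs $X_k$. The vertices of $X_k$ are the scale-$k$ pieces, and edges join pieces that overlap or lie within distance $\sim r_k$.

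\emph{Step 2: cut functions.} For each scale $k$ and each piece $U$, I would define a $1$-Lipschitz cut function adapted to $U$. The candidate is $\phi_U(x)=\min\{\dist(x,X\setminus N_{r_k}(U)), r_k\}$, multiplied by a tent function of $f(x)$ supported on $I$. I would then set
\[
F(x)=\bigl(f(x),\ (\phi_U(x))_{k,U}\bigr)\in \mathbb{R}\oplus_1 \ell_1 \subset L^1 .
\]

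\emph{Step 3: the lower bound.} Suppose $d(x,y)\sim r_k$. If $|f(x)-f(y)|\gtrsim r_k/C$, the first coordinate already gives the bound. Otherwise $f(x)$ and $f(y)$ lie in a common interval $I$ at a scale $r_{k+m}$ with $m$ depending only on $C$. Since $d(x,y)>Cr_{k+m}$, the points $x$ and $y$ lie in different $r_{k+m}$-components of $f^{-1}(I)$. Hence some $\phi_U$ at that scale separates them by $\gtrsim r_{k+m}\sim d(x,y)$.

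\emph{Step 4: the upper bound (main obstacle).} A naive sum over all scales $k$ has $\sim\log(1/d(x,y))$ fine scales, each contributing up to $d(x,y)$, so the upper bound fails. This is exactly the difficulty Cheeger--Kleiner resolve through the inverse-limit structure. The cut functions must be arranged so that at scales much finer than $d(x,y)$, the pair $x,y$ is separated only when $f$ itself separates them. In that regime the tent factors, summed over scales, telescope to a quantity $\lesssim |f(x)-f(y)|$, because intervals of a single real variable have bounded overlap at each scale and geometric weights. At scales coarser than $d(x,y)$, the contributions are bounded by a geometric series in $d(x,y)/r_k$. The hardest part will be making the partitions nested, i.e.\ choosing the pieces so that refinements at finer scales are cut \emph{only} along level sets of $f$, so that the telescoping works. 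Here I would follow Cheeger--Kleiner's construction of a monotone inverse system of graphs, and embed each graph's cut structure compatibly into $L^1$.
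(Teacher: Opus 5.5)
The paper does not prove this statement; it cites it from Cheeger--Kleiner as a black box. So your sketch has to stand on its own, and it does not. Step~4, which you defer to ``Cheeger--Kleiner's construction'', carries the whole content of the theorem. Moreover, the explicit scheme of Steps~2--3 fails in a way that nesting cannot repair.

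\textbf{The upper bound fails.} Take $X=[0,1]$, $f=\mathrm{id}$, $k\ge 0$, and let $\tau_I$ be the height-one tent on $I$. Each $f^{-1}(I)=I$ is a single piece $U$, so $\phi_U=r_k\tau_I$ has slope $\pm 2$. For $x=1/3$ and $y=1/3+d$, no point of $2^{-k-1}\mathbb{Z}$ lies in $[x,y]$ once $r_k>6d$. Hence every such scale contributes exactly $2d$, and $\|F(x)-F(y)\|_1\ge 2d\lfloor\log_2(1/(6d))\rfloor$. Here the partitions are perfectly nested and cut only along level sets of $f$, so the remedy you name does not touch the failure.

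You also have the roles of the scales reversed. The fine scales $r_k<d(x,y)$ are harmless: $0\le\phi_U\le r_k$, and at most two $\phi_U$ per scale are nonzero at a given point, so these scales contribute $O(d)$ in total. The loss comes from the roughly $\log(1/d)$ coarse scales, where a Lipschitz coordinate moves by $\asymp d$ and no factor $d/r_k$ is available. Nothing can ``telescope'' either, because the $\ell_1$-norm adds absolute values of distinct coordinates, so contributions from different scales never cancel.

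\textbf{The framework is obstructed.} For compact $X$ your map $F$ has countably many real coordinates. Only finitely many intervals meet $f(X)$ at each scale, and distinct $r_k$-components are $r_k$-separated, hence finitely many. So $F$ takes values in $\mathbb{R}\oplus_1\ell_1\cong\ell_1$, a space with the RNP. But $\mathbb{M}_\infty$ carries a Lipschitz--light map to $\mathbb{R}$ (Theorem~\ref{thm:liplight}), and by Theorem~\ref{thm:RNPnonemb} it has no bi-Lipschitz embedding into $\ell_1$. Therefore no choice of partitions, weights or nesting inside your framework can prove the statement; the target must genuinely be $L^1$ rather than $\ell_1$.

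\textbf{The missing idea.} Cheeger--Kleiner first show that $X$ is bi-Lipschitz to the inverse limit of an admissible system of metric graphs. They then obtain the $L^1$ embedding from a cut measure built along this system by their diffusion construction. In that construction, cuts at level $i$ are pulled back and modified at level $i+1$, so cut mass is rearranged rather than accumulated over scales, and the mass separating two points stays comparable to their distance uniformly in the level. Until that is carried out, your proposal amounts to the same citation the paper makes.
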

	Thus, it suffices to construct a Lipschitz--light map $f:\mathbb{M}_\infty\to\mathbb{R}$.
	
	We will use the coordinate projections $\pi_{\infty,k}:\mathbb{M}_\infty\to\mathbb{M}_k$ given by $\pi_{\infty,k}((x_i)_{i\ge0})=x_k$. Abbreviate these maps as $\pi_k,\pi_0$. Each $\pi_k$ is $1$-Lipschitz; in particular we write $\pi:=\pi_0:\mathbb{M}_\infty\to\mathbb{M}_0\cong Q_0$. 
    
    A map $f:X\to Y$  is called ($C$-)regular, for $C\in \N$, if it is $C$-Lipschitz and for every $y\in Y$ and $r>0$ there exists $x_1,\dots, x_C \in X$ s.t. $f^{-1}(B(x,r))\subset \bigcup_{j=1}^C B(x_i, r)$.   The regularity of the map $\pi$ is established in \cite[Lemma 2.3]{merenkov}.
	
	\begin{lemma}[Merenkov]\label{lem:pi-regular}
		The projection $\pi=\pi_0:\mathbb{M}_\infty\to\mathbb{M}_0$ is regular.
	\end{lemma}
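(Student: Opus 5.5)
The plan is to derive regularity from a quantitative counting statement: every fiber of $\pi$, at every scale, is covered by a bounded number of balls. First note that $\pi$ is $1$-Lipschitz, since it is the inverse-limit coordinate map. So only the covering condition needs proof. Fix $z\in\mathbb{M}_0$ and $r>0$. We must cover $\pi^{-1}(B(z,r))$ by at most $C$ balls of radius comparable to $r$; a fixed multiple of $r$ is harmless, since we can subdivide using the doubling property of $\mathbb{M}_\infty$. If $r\ge 1/4$, a bounded number of balls suffices, because $\diam \mathbb{M}_\infty$ is bounded. So assume $2^{-i-1}< r\le 2^{-i}$ for some $i\ge 2$.

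The key step splits the slits into coarse and fine ones. Slits of level $\ge i+C_0$ are short compared to $r$, so they cannot separate points of $\pi^{-1}(B(z,r))$ at scale $r$. Precisely, I would use the argument of Lemma~\ref{projection}, with the bound passed to the limit $n\to\infty$:
\[
d_{\mathbb{M}_\infty}(x,y)\le 2\,d_{\mathbb{M}_{i'}}(\pi_{i'}(x),\pi_{i'}(y))+2\cdot 2^{-i'}.
\]
Here $i'=i+C_0$. This reduces the problem to covering $\pi_{\infty,i'}(\pi^{-1}(B(z,r)))=\pi_{i',0}^{-1}(B(z,r))$ in $\mathbb{M}_{i'}$ by boundedly many balls of radius $\sim r$. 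The reduction works because a point-preimage set of $\mathbb{M}_{i'}$-diameter $\lesssim r$ lifts to a set of $\mathbb{M}_\infty$-diameter $\lesssim r$.

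Next comes the covering in $\mathbb{M}_{i'}$. The ball $B(z,r)$ meets at most a bounded number of dyadic squares of levels $0,\dots,i'$ whose slits have length $\gtrsim r$. Only slits of levels $\ge i-2$ can actually meet $B(z,r)$ unless $z$ is near a larger slit, and there are at most $O(1)$ slits of each level $i-2,\dots,i'$ meeting $B(z,r)$. So $B(z,r)$ meets at most $M=M(C_0)$ slits. Cut $B(z,r)$ along vertical lines through these slits, as in the proof of Theorem~\ref{upper_bound}, into at most $M+1$ convex pieces. Each slit is doubled in $\mathbb{M}_{i'}$, so the preimage $\pi_{i',0}^{-1}$ of each piece consists of at most $2^{M}$ (in fact $\le 2$ per adjacent slit) isometric copies of a convex set of diameter $\le 2r$. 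In total we get at most $C=C(M)$ sets of diameter $\le 2r$. Lifting through the inequality above gives sets in $\mathbb{M}_\infty$ of diameter $\le 4r+2\cdot 2^{-i'}\le 5r$. Finally, choose centers $x_j$ in each set.

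The main obstacle is the reduction step. The inequality from Lemma~\ref{projection} is stated for $\mathbb{M}_n$ with $n<\infty$, and its additive error $2\cdot 2^{-i'}$ must be uniform in $n$. I would verify that the constructed path (grid detours plus vertical segments) lifts to every $\mathbb{M}_n$ with the same length, and then pass to the limit in the definition of $d_{\mathbb{M}_\infty}$.

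A second point to check is near large slits. A ball near a slit of level $\ll i$ meets only one or two long slits, and the two sides of such a slit are genuinely far apart in $\mathbb{M}_\infty$. This is exactly why several balls are needed, but only $O(1)$ of them. Slits of levels $\ll i$ that meet $B(z,r)$ number at most one per level. They are nested in position only along a bounded number of levels, because different-level slit centers are separated by $\gtrsim$ their lengths, so the count stays bounded.
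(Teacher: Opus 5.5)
Your argument is correct, but it does not follow the paper's route: the paper gives no proof of this lemma and simply imports it from \cite[Lemma 2.3]{merenkov}. You instead derive it from the paper's own Lemma~\ref{projection}.

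First, you pass that estimate to the limit $n\to\infty$. This is legitimate because the bound is uniform in $n$. The comparison path consists of two vertical segments, which vertical slits never obstruct, and a path on the level-$i'$ grid, which meets no slit of level $>i'$. This shows that slits of level $>i'=i+C_0$ distort $\mathbb{M}_\infty$-distances only by a factor $2$ plus $O(2^{-i'})$.

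It then suffices to cover $\pi_{i',0}^{-1}(B(z,r))$ in $\mathbb{M}_{i'}$. There, cutting the disc along the vertical lines through the $M=O(4^{C_0})$ slits it meets leaves at most $M+1$ convex, slit-free pieces, whose lifts have closures of diameter $\le 2r$. Your route yields a self-contained proof with explicit constants, and it isolates the two mechanisms behind regularity: insensitivity to small slits at scale $r$, and the bounded number of coarse slits meeting a small disc. The citation is merely shorter.

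Three points need tightening.

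(i) The proof of Lemma~\ref{projection} as written actually gives $d_{\mathbb{M}_n}(x,y)\le 2\,d_{\mathbb{M}_i}(\pi_{n,i}(x),\pi_{n,i}(y))+6\cdot 2^{-i}$. The reason is that replacing $\pi_{n,i}(x),\pi_{n,i}(y)$ by $\pi_{n,i}(x'),\pi_{n,i}(y')$ also costs $2\cdot 2^{-i}$ in $\mathbb{M}_i$. This is absorbed by taking $C_0\ge 4$.

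(ii) Replace your vague remark that large slits are ``nested in position'' by the exact fact. For levels $j<j'$, the abscissae $(2l+1)2^{-j-1}$ and $(2l'+1)2^{-j'-1}$ are an even and an odd multiple of $2^{-j'-1}$, so they differ by at least $2^{-j'-1}$. Combine this with the same-level separations, $2^{-j}$ horizontally and $2^{-j-1}$ vertically. It follows that a disc of radius $r\le 2^{-i}$ meets at most one slit of level $\le i-3$.

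(iii) Do not shrink radii using the doubling property of $\mathbb{M}_\infty$. That property is not proved in the paper, and it is usually deduced from the regularity of $\pi$ itself. Instead, cover $B(z,r)$ by boundedly many planar discs of radius $r/5$ and run your argument on each one.

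Finally, each strip piece contributes exactly one set (the closure of the lift of the open piece), so the $2^{M}$ count is unnecessary.
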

 The $x$- and $y$-components of $\pi$ are not Lipschitz--light because some of their fibres contain nontrivial curves that produce large $r$-components. The key geometric idea is to \emph{tilt} the projection by a small slope: compose $\pi$ with a linear functional of small slope.
 
	\subsection*{Tilted projection and the main theorem}

	Fix $\alpha\in(0,1/4)$ and define
	\[
	\Lambda_\alpha(x,y)=\alpha x + y\qquad(\Lambda_\alpha:\mathbb{R}^2\to\mathbb{R}).
	\]
	Set $f=\Lambda_\alpha\circ\pi:\mathbb{M}_\infty\to\mathbb{R}$. The main technical theorem of this section is:
	
	\begin{theorem}\label{thm:liplight}
		There exists $C\ge1$ (depending only on constants from the construction and the regularity constant of $\pi$) such that for every $\alpha\in(0,1/4)$ the map $f=\Lambda_\alpha\circ\pi$ is $C$-Lipschitz--light.
	\end{theorem}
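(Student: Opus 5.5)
Lipschitz is immediate: $\pi$ is $1$-Lipschitz into $[0,1]^2$ with the Euclidean metric (path lengths in $\mathbb{M}_\infty$ dominate Euclidean lengths of the projections), and $\Lambda_\alpha$ is $\sqrt{1+\alpha^2}\le 2$-Lipschitz. So the work is the lightness condition. Fix $r>0$ and $W\subset\R$ with $\diam W\le r$; enlarging $W$, assume $W=[t,t+r]$. Let $A$ be an $r$-component of $f^{-1}(W)$. Its image $\pi(A)$ lies in the tilted strip $S=\{(x,y)\in[0,1]^2:\ t\le \alpha x+y\le t+r\}$, which has vertical thickness $r$ and slope $-\alpha$. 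I plan to show $\diam A\le Cr$ with $C$ independent of $\alpha$. (If the constant turns out to depend on $\alpha$, fixing $\alpha=1/8$ still suffices for Theorem~\ref{thm:embeddingL1}.) Only scales $r\lesssim 1$ matter, since $\diam\mathbb{M}_\infty<\infty$.

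The key geometric step is the following. Let $n$ be the dyadic level with $2^{-n}\approx r/\alpha$; more precisely take $2^{-n}\sim Kr$ for a large constant $K$, and handle the $\alpha$-dependence separately. An $r$-path in $A$ moves by steps of size $\le r$ and projects into $S$. I claim it cannot travel horizontally more than a bounded number of level-$n$ squares, because it gets trapped by slits. Consider a dyadic square $Q$ of side $s=2^{-m}$ with $s\ge Mr$ for a suitable $M$. Its slit is the vertical segment through the center of $Q$, of length $s/2$, lying at heights in $[\tfrac s4,\tfrac{3s}4]$ relative to $Q$. The strip $S$ crosses the vertical line containing this slit in a segment of length $r\ll s$. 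If that segment lies inside the slit, with margin $\ge r$ on both ends, then no $r$-path inside $\pi^{-1}(S)$ can cross that vertical line near $Q$. The two sides of the slit are at distance $\gtrsim s$ from each other in $\mathbb{M}_\infty$, except near the slit endpoints, and a jump of size $\le r$ cannot cross.

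For every horizontal position, I would then find such a blocking slit within horizontal distance $O(r)$ to the left and to the right. The slits at level $m$ sit at $x$-positions $(2l+1)2^{-m-1}$ and occupy the middle half of every column of squares vertically. At a given $x$-location, the strip's height $y\approx t-\alpha x$ is fixed up to $r$. Among the levels $m$ with $2^{-m}\in[Mr,2Mr]$ there are vertical slit segments covering half of each dyadic column height. By also allowing level $m+1$, whose middle-half slits cover the complementary heights (up to $O(r)$ margins), every height is covered by a slit of one of the two consecutive levels with the required margin. Those slits lie at horizontal spacing $\sim Mr$. Hence $A$ is confined horizontally to a window of width $O(Mr)$. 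Vertically it is then confined by $S$: the $y$-spread is at most $r+\alpha\cdot O(Mr)$. So $\pi(A)$ has Euclidean diameter $O(r)$.

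Finally, pass from $\pi(A)$ to $A$ using Merenkov's regularity, Lemma~\ref{lem:pi-regular}. The preimage $\pi^{-1}(B(p,Cr))$ is covered by $C'$ balls of radius $Cr$. So $A$ lies in a union of boundedly many balls of radius $Cr$, and since $A$ is $r$-connected, chaining through these balls gives $\diam A\le 2C'Cr+C'r$. I expect the main obstacle to be the blocking step: making precise that an $r$-jump in $\mathbb{M}_\infty$ cannot cross a slit except near its endpoints, and checking the margin bookkeeping so that some slit always blocks. This includes the boundary cases when the strip passes near slit endpoints at both chosen levels. There I would either go to a third level $m+2$ or use the tilt $\alpha$, which moves the strip height by $\alpha\cdot Mr$ across neighboring columns and so guarantees that it lands in the interior of some slit within $O(1/\alpha)$ columns. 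The tilt is essential because for $\alpha=0$ a horizontal strip at a height avoiding slit interiors, such as a dyadic grid line, gives long components.
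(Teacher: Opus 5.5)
\paragraph{The covering step is false.} Your plan rests on the claim that slits of two (or a few) consecutive levels $m,m+1$ with $2^{-m}\sim Mr$ cover every height with margin, so that a blocking slit lies within horizontal distance $O(r)$ of every point, uniformly in $\alpha$. This is false, and the failure is not a boundary case. Measured modulo $2^{-m}$ and in units of $2^{-m}$:
\begin{itemize}
\item level-$m$ slits occupy the heights $[\tfrac14,\tfrac34]$;
\item level $m+1$ adds $[\tfrac18,\tfrac38]\cup[\tfrac58,\tfrac78]$;
\item level $m+2$ extends the union to $[\tfrac1{16},\tfrac{15}{16}]$.
\end{itemize}
In general no slit of level $\ge m$ ever meets a horizontal grid line $y=j2^{-m}$. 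Such a line is met only by coarser slits, which can be arbitrarily sparse on the scale $r$: $y=\tfrac12$ meets only the level-$0$ slit, and $y=0$ meets none. Your own closing remark already signals the problem. The covering claim never uses the tilt, so it would make the $y$-projection ($\alpha=0$) Lipschitz--light, which you correctly say fails.

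\paragraph{No $\alpha$-independent constant is possible.} Take $r\le\alpha$ and $W=[0,r]$. The segment $[0,r/\alpha]\times\{0\}$ is isometrically embedded in $\mathbb{M}_\infty$, since no slit touches $y=0$. It lies in $f^{-1}(W)$, so some $r$-component has diameter at least $r/\alpha$. Hence necessarily $C\ge 1/\alpha$, and the goal of proving $\operatorname{diam}A\le Cr$ with $C$ independent of $\alpha$ cannot be reached by any argument.

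\paragraph{What does work.} The mechanism you list only as a fallback is the paper's argument, and it has to be the main one.
\begin{enumerate}
\item Fix a single level $k$ with $2^{-k}\sim r$. Take $r$ a sufficiently small fixed fraction of $2^{-k}$ so that both margins exceed $r$.
\item Record the height of the strip at the slit abscissae $x=(2l+1)2^{-k-1}$. Modulo $2^{-k}$ these heights form an arithmetic progression with step $\alpha 2^{-k}$.
\item This step is shorter than a window of length $2^{-k-2}$ in the middle of the slit exactly because $\alpha<1/4$. So among any $\lceil 1/\alpha\rceil+1$ consecutive columns there is one where the whole cross-section of the strip sits inside the slit with margins (Lemma~\ref{lem:combinatorics}).
\item Cutting at those columns splits the strip into parallelograms of width $O(r/\alpha)$. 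Their preimages are at mutual distance $>r$ by Lemma~\ref{lem:parallelogram}, which is where the tilted geometry enters.
\item Your final regularity-and-chaining step then applies with balls of radius $O(r/\alpha)$, giving $C\approx L/\alpha$.
\end{enumerate}
By the example above, this dependence on $\alpha$ is unavoidable. The paper's own constant $C_1=\lceil a/b\rceil+1=\lceil 1/\alpha\rceil+1$ also depends on $\alpha$, so the displayed statement should be read as ``for each fixed $\alpha$.'' As you note, a single value such as $\alpha=1/8$ suffices for Theorem~\ref{thm:embeddingL1}.
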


	Combining Theorem \ref{thm:liplight} with Theorem \ref{thm:ck-main} yields  immediately the proof of Theorem \ref{thm:embeddingL1}. To prove Theorem \ref{thm:liplight} we analyze the geometry of the tilted preimage
\(\widetilde W=\Lambda_\alpha^{-1}(W)\cap[0,1]^2\) of a small interval \(W\subset\mathbb R\).
Two elementary but crucial ingredients are needed for this analysis. The first is a
purely geometric fact which shows that a vertical cut through a narrow parallelogram
forces any continuous path joining the two sides to have a definite minimal length;
the second is a short combinatorial lemma that controls gaps in certain modular
sequences and hence limits how many slits can be missed by $\widetilde W$. Together, these lemmas allow us to cover \(\widetilde W\) by finitely many
parallelograms whose lifts under the regular projection \(\pi\) are well separated
(at scale \(r\)), and then to bound the diameter of each \(r\)-component of
\(f^{-1}(W)\) by a constant multiple of \(r\). We state the two lemmas used in the
argument below.

\begin{lemma}\label{lem:parallelogram}
	Let $\alpha,r\in\mathbb{R}$ and let $x_0\in\mathbb{R}$ be fixed.  Let
	\[
	R=\{(x,y): x\in[c,d],\; x_0-\alpha x \le y \le x_0-\alpha x + r\}
	\]
	be a parallelogram bounded by the lines $y=x_0-\alpha x$ and $y=x_0-\alpha x + r$ and the vertical lines $x=c$ and $x=d$.  Let $w\in(c,d)$ and let $\delta_+,\delta_->0$.  Consider a vertical segment
	\[
	l_w=\{w\}\times [\,x_0-\alpha w-\delta_-,\; x_0-\alpha w + r+\delta_+\,],
	\]
	so that $l_w$ meets $R$ and splits $R$ into the left and right parts
	\[
	R^L=\{(x,y)\in R:x<w\},\qquad R^R=\{(x,y)\in R:x>w\}.
	\]
	Then in the plane $\mathbb{R}^2\setminus l_w$ any continuous path joining a point of $R^L$ to a point of $R^R$ has length at least $\min(\delta_-,\delta_+)$.
\end{lemma}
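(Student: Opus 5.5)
The plan is to follow the path and use continuity to locate where it meets the vertical line $x=w$. Let $\gamma:[0,1]\to\mathbb{R}^2\setminus l_w$ be continuous with $\gamma(0)\in R^L$ and $\gamma(1)\in R^R$. The first coordinate of $\gamma$ is continuous and changes sign relative to $w$, so by the intermediate value theorem there is a first time $t_1$ with $\gamma_x(t_1)=w$. Since $\gamma$ avoids $l_w$, the point $\gamma(t_1)=(w,y_1)$ satisfies either $y_1<x_0-\alpha w-\delta_-$ or $y_1>x_0-\alpha w+r+\delta_+$. By symmetry (reflect in the horizontal line through the middle of $R$ at $x=w$, which swaps the roles of $\delta_\pm$), it suffices to treat the first case.

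The next step is to show that $\gamma$ must already have travelled a vertical distance of at least $\delta_-$ before time $t_1$. The obvious first try is to say that the starting point lies in $R$ and hence has height at least $x_0-\alpha x$. This does not work directly: because of the slope $\alpha$, a point of $R^L$ near $x=c$ can have height far below $x_0-\alpha w$, or far above it.

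So I will compare against the slanted lower boundary instead of a horizontal level. Consider the function $\phi(x,y)=y-(x_0-\alpha x)$. On $R$ we have $\phi\ge 0$, while at $\gamma(t_1)$ we have $\phi<-\delta_-$. Hence $\phi$ changes by more than $\delta_-$ along $\gamma|_{[0,t_1]}$. Since $\phi$ is $\sqrt{1+\alpha^2}$-Lipschitz, this only gives length $\ge \delta_-/\sqrt{1+\alpha^2}$, which is weaker than the claim. For the claimed constant I will use $\phi$ when $\alpha\le 0$ on the left part, because then the drop comes from vertical motion together with horizontal motion in a favourable direction.

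Let me make this precise for the case $\alpha\ge 0$, using the endpoint that lies on the side where the boundary line is higher. As $x$ increases towards $w$, the lower boundary $x_0-\alpha x$ decreases, so on the left part the starting point $p=\gamma(0)$ has height at least $x_0-\alpha p_x\ge x_0-\alpha w$. The height at $\gamma(t_1)$ is below $x_0-\alpha w-\delta_-$, so the vertical displacement, and hence the length, is at least $\delta_-$. For the upper case I will instead run the argument backwards from $\gamma(1)\in R^R$ to the last crossing time $t_2$. There the upper boundary $x_0-\alpha x+r\le x_0-\alpha w+r$, so the vertical displacement is at least $\delta_+$. The case $\alpha<0$ is symmetric with left and right exchanged.

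The main obstacle is exactly this handling of the slope. The first and last crossings must be paired with the correct endpoint ($R^L$ or $R^R$) according to the signs of $\alpha$ and of which side of $l_w$ the crossing lies on. If a configuration arises where neither pairing works (for example $\alpha>0$ with the first crossing above), I will use both crossings $t_1$ and $t_2$. The path must get from $\gamma(0)$, at height at most $x_0-\alpha c+r$, over the top of $l_w$ at height above $x_0-\alpha w+r+\delta_+$. Alternatively, if the two crossings lie on opposite sides, the path must travel from above $l_w$ to below it, a vertical distance of at least $r+\delta_++\delta_-$, which exceeds $\min(\delta_-,\delta_+)$.

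If a clean case analysis fails in some configuration, I will settle for the bound $\min(\delta_-,\delta_+)/\sqrt{1+\alpha^2}\ge \tfrac{4}{\sqrt{17}}\min(\delta_-,\delta_+)$ from the $\phi$ argument, using the restriction $\alpha\in(0,1/4)$ of this section, since only a uniform constant matters downstream.
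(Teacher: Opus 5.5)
Your core mechanism is the paper's: find a point $(w,y_w)$ of $\gamma$ on the line $x=w$ but off $l_w$, and bound the length of $\gamma$ by the vertical separation between $(w,y_w)$ and a suitably chosen endpoint. As written, however, the argument is not complete, and it contains steps that fail.

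The source of the trouble is that you tie the first crossing $t_1$ to $\gamma(0)$ and the last crossing $t_2$ to $\gamma(1)$. This creates a supposedly bad configuration (``$\alpha>0$, first crossing above''), and the argument you give for it does not work. For $\alpha>0$ the bound $x_0-\alpha c+r$ on the height of $\gamma(0)$ exceeds the top $x_0-\alpha w+r+\delta_+$ of $l_w$ as soon as $\alpha(w-c)>\delta_+$, so it gives no lower bound.

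Your remark that for $\alpha\le 0$ one should use $\phi$ ``on the left part'' is backwards. For $\alpha<0$ and a crossing below $l_w$, the left endpoint can lie within essentially $\delta_-/\sqrt{1+\alpha^2}<\delta_-$ of the crossing point.

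The fallback $\min(\delta_-,\delta_+)/\sqrt{1+\alpha^2}$ proves a strictly weaker statement. The lemma is asserted for every $\alpha\in\mathbb{R}$ with constant exactly $\min(\delta_-,\delta_+)$, and the proof of Theorem~\ref{thm:liplight} uses the strict inequality $2^{-k-3}>r$ with that exact constant.

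The missing observation is one line. Any crossing point $(w,y_w)=\gamma(t)$ lies on $\gamma$, so $\ell(\gamma)\ge\max\{|\gamma(0)-(w,y_w)|,\,|\gamma(1)-(w,y_w)|\}$. You may therefore pair any crossing with either endpoint, and first or last crossings are irrelevant. All four configurations then work:
\begin{itemize}
\item for $\alpha\ge 0$, a crossing above $l_w$ pairs with $z^R$, since $y_R\le x_0-\alpha x_R+r\le x_0-\alpha w+r$;
\item for $\alpha\ge 0$, a crossing below pairs with $z^L$, since $y_L\ge x_0-\alpha x_L\ge x_0-\alpha w$;
\item for $\alpha<0$, the roles of $z^L$ and $z^R$ swap.
\end{itemize}
This is exactly the paper's proof.

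Your own correct pieces already close the argument without the fallback. For $\alpha\ge 0$, either $\gamma(t_1)$ is below (pair it with $\gamma(0)$), or $\gamma(t_2)$ is above (pair it with $\gamma(1)$). Otherwise $\gamma(t_1)$ is above and $\gamma(t_2)$ is below, and your opposite-sides bound $r+\delta_++\delta_-$ applies. The case $\alpha<0$ follows by reflecting in $x=w$, which sends $\alpha\mapsto-\alpha$ and swaps $R^L$ and $R^R$. The proposal, however, does not assemble these pieces and instead treats the configuration as unresolved.
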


\begin{proof}
	Let $z^L \in R^L$ and $z^R \in R^R$ and let
	\[
	\gamma : [0,1] \to \mathbb{R}^2 \setminus l_w
	\]
	be any continuous path with $\gamma(0)=z^L$, $\gamma(1)=z^R$. The vertical segment
	\[
	l_w = \{w\} \times [a,b]
	\]
	with
	\[
	a := x_0 - \alpha w - \delta_-, \qquad
	b := x_0 - \alpha w + r + \delta_+
	\]
	is removed from the plane. Since $\gamma$ connects a point with $x<w$ to a point
	with $x>w$, by continuity there exists $t \in (0,1)$ with
	\[
	\gamma(t) = (w,y_w).
	\]
	Because $\gamma(t) \notin l_w$, we must have either
	\[
	y_w \ge b = x_0 - \alpha w + r + \delta_+
	\quad\text{or}\quad
	y_w \le a = x_0 - \alpha w - \delta_-.
	\]
	
	Assume the first case $y_w \ge x_0 - \alpha w + r + \delta_+$ (the other case is
	analogous with $\delta_-$). Write
	\[
	z^R = (x_R,y_R)
	\]
	with $x_R > w$. 
	If $\alpha \ge 0$ then the map $x \mapsto x_0 - \alpha x$ is decreasing, hence
	\[
	y_R \le x_0 - \alpha x_R + r \le x_0 - \alpha w + r.
	\]
	
	Define the orthogonal projection
	\[
	p = (w,y_R).
	\]
	Then the triangle with vertices $z^R$, $p$, and $(w,y_w)$ is right-angled at $p$, so
	\[
	d\bigl(z^R,(w,y_w)\bigr)
	\ge d\bigl(p,(w,y_w)\bigr)
	= |y_w - y_R|
	\ge y_w - (x_0 - \alpha w + r)
	\ge \delta_+.
	\]
	
	Thus the portion of $\gamma$ between $z^R$ and $(w,y_w)$ has length at least
	$\delta_+$, and consequently the total length of $\gamma$ is at least $\delta_+$.
	The case $\alpha<0$ is handled the same way (the monotonicity inequalities reverse,
	so one uses $z^L$ instead of $z^R$). The argument for $y_w \le a$ is identical with
	$\delta_-$ in place of $\delta_+$. Therefore every path in
	$\mathbb{R}^2 \setminus l_w$ from $R^L$ to $R^R$ has length at least
	$\min(\delta_-,\delta_+)$.
\end{proof}

In fact, this result is sharp in the following sense: When $\alpha\to\infty$, $d(z^R,(w,x_0-\alpha x + r))\to 0$ and $d(z^L,(w,x_0-\alpha x + r))\to 0$, then $d(z^R,z^L)\to\delta_{+}$.

Next, we give our second tool, a simple combinatorial lemma, which is needed to control the gaps in intersections with certain intervals in arithmetic sequences. For $a>0$ we denote by $(x \ {\rm mod}\  a) \in [0,a)$ the remainder under division by $a$, for $x>0$. 

\begin{lemma}\label{lem:combinatorics}
	Let $x_0,a>b>0$,  Let $I\subset [0,a)$ be a nonempty interval and $b<|I|$. Let $x_n := (x_0 + n b) \ {\rm mod} \  a$ and define $B_I:=\{n\in \mathbb{N}: (x_n {\rm \ mod\  a}) \in I  \}$.  so that for every $n,m\in \mathbb{N}$ with $[n,m]\cap B_I=\emptyset$, we have $|n-m|\leq \lceil a/b\rceil + 1$.
\end{lemma}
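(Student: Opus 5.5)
We read Lemma~\ref{lem:combinatorics} as claiming that, under $0<b<a$ and $b<|I|$, any run of consecutive indices avoiding $B_I$ has length at most $\lceil a/b\rceil+1$. Equivalently, among any $\lceil a/b\rceil+2$ consecutive integers at least one lies in $B_I$. The idea is that the sequence $x_0+nb$ increases in steps of size $b$, which is smaller than the length of $I$. So whenever it sweeps once through a full period of length $a$, it cannot jump over (a translate of) $I$. We work with the unreduced real sequence $X_n := x_0+nb$, so that $x_n = X_n \bmod a$.

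\textbf{Step 1 (lifting $I$).} Consider the periodic set $\widetilde I := I + a\mathbb{Z}\subset\mathbb{R}$. Then $n\in B_I$ exactly when $X_n\in\widetilde I$. Because $I\subset[0,a)$, the translates $I+ka$, $k\in\mathbb{Z}$, are pairwise disjoint intervals of length $|I|$. Consecutive translates have left endpoints exactly $a$ apart.

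\textbf{Step 2 (a step of size $b$ cannot skip a translate).} Suppose $X_n<\inf(I+ka)$ for some $k$. Since consecutive terms differ by $b<|I|$, the sequence $(X_m)_{m\ge n}$ cannot pass from below $I+ka$ to above it without landing inside it. Indeed, let $m$ be the first index with $X_m\ge\inf(I+ka)$. Then
\[
X_m<\inf(I+ka)+b<\inf(I+ka)+|I|=\sup(I+ka).
\]
Hence $X_m\in I+ka$, up to the endpoint convention for $I$. Since $b<|I|$ strictly, a closed subinterval of positive length can be shrunk into the interior, so endpoints cause no trouble.

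\textbf{Step 3 (counting).} Fix $n$. Let $k$ be the smallest integer with $\inf(I+ka)>X_n$, so that
\[
\inf(I+ka)-X_n\le a.
\]
The first index $m\ge n$ with $X_m\ge\inf(I+ka)$ satisfies $(m-n)b< a+b$. That is, $m-n<a/b+1$, which gives $m-n\le\lceil a/b\rceil$. By Step 2, $m\in B_I$.

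\textbf{Conclusion.} Now suppose $[n,m]\cap B_I=\emptyset$ with $n\le m$. Then every index in $[n,n+\lceil a/b\rceil]$ lies outside $B_I$, which contradicts Step 3 unless $m<n+\lceil a/b\rceil$. In particular $|n-m|\le\lceil a/b\rceil+1$; the extra $+1$ absorbs the endpoint slack.

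\textbf{Main obstacle.} The only delicate point is the statement itself. It is garbled: the quantifiers are unclear, and it includes the redundant double reduction ``$x_n \bmod a$''. The proof should therefore be stated for the cleaned-up version above, with care taken over whether $I$ is open or closed at its endpoints.
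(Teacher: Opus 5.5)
Your proof is correct and follows essentially the same route as the paper: you lift to the unreduced sequence $x_0+nb$ and the periodic set $I+a\mathbb{Z}$, observe that the next translate's left endpoint lies within distance $a$, and use $b<|I|$ to show that the first index crossing it lands inside the translate after at most $\lceil a/b\rceil$ steps. The only differences are cosmetic: you merge the paper's two cases ($y_n$ below $e$ or above $f$ within its period) by taking the smallest translate whose left endpoint exceeds $X_n$, and you also handle open endpoints, which the paper sidesteps by assuming $I=[e,f]$.
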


\begin{proof}
	Let $C=\lceil a/b\rceil + 1$, and let $I=[e,f]$ (so $0\le e<f<a$). It suffices to show that for every $n \in \mathbb{N}$, at least one of the elements in $\{ n,n+1,\dots, n+(C-1)\}$ lies in $B_I$. Suppose this were not the case. Then
	\[
	x_n,x_{n+1},\dots, x_{n+(C-1)}\not\in I.
	\]
	Let $\tilde{I}=a\mathbb{Z} + I$, and let $y_n = x_0 + n b$. We have that $y_{n+k} \not\in \tilde{I}$ for $k=0, \dots, C-1$. Let $z\in \mathbb{Z}$ be such that $y_n \in [za,(z+1)a)$. Since $y_n\not\in \tilde{I}$, we have $y_n \in [za,za+e) \cup (za+f,(z+1)a)$.
	
	If $y_n\in [za,za + e)$, then $za+e-y_n \le za+e-za \le e \le a$, and since $b<f-e$ (because $b<|I|=f-e$), for
	\[
	T=\left\lceil \frac{za + e-y_n}{b}\right\rceil
	\]
	we have
	\[
	T \le \left\lceil \frac{a}{b}\right\rceil = C-1,
	\]
	i.e. $(T-1)b<za + e-y_n \leq Tb $ which with $b<|I|$ yields $y_{n+T} \in [za +e, za + f] \subset \tilde{I}$, contradicting the assumption. Thus $y_n\in [za,za + e)$ is not possible.

	On the other hand, if $y_n \in (za+f,(z+1)a)$, then the distance from $y_n$ to the next copy of $za+e$ (i.e. $(z+1)a+e$) is at most $a$:
	\[
	(z+1)a + e - y_n \le a + e - f \le a,
	\]
	and the same argument as above with
	\[
	T=\left\lceil \frac{(z+1)a+e-y_n}{b}\right\rceil \le \left\lceil\frac{a}{b}\right\rceil = C-1
	\]
	yields $y_{n+T}\in [(z+1)a+e,(z+1)a+f] \subset \tilde{I}$, again a contradiction.
	
	This proves that every block of $C$ consecutive indices meets $B_I$, which is equivalent to the stated bound on gaps.
\end{proof} 

Next we will prove the main theorem of this section. Figure~\ref{fig:lipschitz_light} below illustrates the main idea of the proof in a simplified way for selected parameters.
\begin{figure}[h]
	\centering
	\includegraphics[width=0.8\textwidth]{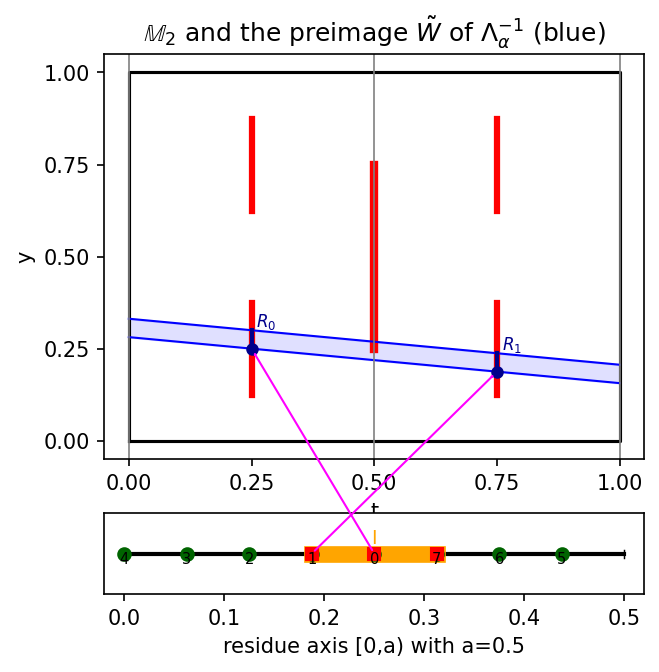}
	\caption{Schematic relation between the residue sequence and the tilted strip.  The \textbf{upper panel} shows the unit square with vertical slits (red) and the tilted strip
		\(\tilde W=\Lambda_\alpha^{-1}([x_0,x_0+r])\cap[0,1]^2\) (blue band) bounded by the lines
		\(y=x_0-\alpha t\) and \(y=x_0-\alpha t + r\). Dark-blue dots mark sampled midpoints
		\(t=2^{-k-1}+n2^{-k}\) and the short dark-blue vertical segments indicate the vertical pieces
		of \(\tilde W\) used in the parallelogram covering. The \textbf{lower panel} displays the residue
		axis \([0,a)\) with \(a=2^{-k}\): green dots are the residues \(x_n=(x_0' + n b)\bmod a\),
		the orange bar is the interval \(I=[3\cdot2^{-k-3},5\cdot2^{-k-3}]\), and red squares highlight
		residues lying in \(I\). Magenta arrows are schematic and illustrate how residues \(x_n\in I\)
		correspond to vertical pieces at the associated \(t\)-midpoints. Parameters used for the sketch:
		\(\alpha=1/8,\; r=0.05,\; k=1\). }
	\label{fig:lipschitz_light}
	\vspace{4pt}
\end{figure}

\begin{proof}[Proof of Theorem \ref{thm:liplight}]
	As a composition of a linear map with the $1$-Lipschitz $\pi_0$, the map $\Lambda_\alpha$ is Lipschitz. Thus, it remains to prove the second condition in Definition \ref{def:liplight}. Let $r>0$ and $W\subset \mathbb{R}$ be a set with $\diam(W)\leq r$. Without loss of generality take $W=[x_0,x_0+r] \subset [0,2]$.  Let $\widetilde{W} = \Lambda_\alpha^{-1}([x_0,x_0+r])\cap [0,1]^2$.
	
	First, suppose $r\geq 2^{-3}$. Each component of $f^{-1}(W)$ has diameter at most $\diam(\mathbb{M}_\infty)\leq 3\leq 24 r$. Thus, for such $r$, the $C$-Lipschitz light condition holds for any $C\geq 24$.
	
	Next, consider $r<2^{-3}$, and let $k\geq 0$ be a natural number so that $2^{-k-4}\leq r<2^{-k-3}$. Let
	\[
	T = \{t : x_0 -\alpha t \in [0,1]\}.
	\]
	Also, let $N = \{n\in\mathbb{Z} : [n2^{-k}, (n+1)2^{-k}]\cap T \neq \emptyset\}$ and hence $T \subset \bigcup_{n \in N} [n2^{-k}, (n+1)2^{-k}]$. We have
	\[
	\widetilde{W}\subset\bigcup_{t\in T} \{t\}\times  [x_0-\alpha t, x_0-\alpha t + r].
	\]
	
	Put $a=2^{-k}$, $b=\alpha a$, and let
	\[
	I = \big[3\cdot 2^{-k-3},\,5\cdot 2^{-k-3}\big],
	\]
	and set $x_0' = x_0 - \alpha 2^{-k-1}$. As in Lemma \ref{lem:combinatorics} consider the set
	\[
	B_I = \{n \in \mathbb{Z} : x_0' + nb \in I \ {\rm mod}\ a\}.
	\]
	Then, by applying Lemma \ref{lem:combinatorics} and noting that $b<|I|=2^{-k-2}$ (which follows from the choice of $a$ and the assumption $\alpha<1/4$), we get that there exists a constant $C_1\in \mathbb{N}$ (independent of $k$) so that whenever $n,m\in B_I$ with $(n,m)\cap B_I =\emptyset$, then $|n-m|\leq C_1$.
	
	Let $\tilde{B}=B_I\cap N$, and enumerate the elements of this set in increasing order: $\widetilde{B}=\{b_1,\dots, b_s\}$. Since
	\[
	T\subset \bigcup_{n\in N} [n2^{-k}, (n+1)2^{-k}],
	\]
	we obtain the covering
	\[
	T \subset [\min(N) 2^{-k}, 2^{-k-1}+b_1 2^{-k}]\cup [2^{-k-1} + b_s 2^{-k}, (\max(N)+1)2^{-k}] \cup \bigcup_{i=1}^{s-1} [2^{-k-1} + b_i 2^{-k},2^{-k-1} + b_{i+1} 2^{-k}].
	\]
	
	Denote the intervals in the above union by $I_0,\dots, I_{s}$. By the choice of $C_1$, we have $b_1-\min(N)\leq C_1$, $\max(N)-b_s\leq C_1$ and $b_{i+1}-b_i\leq C_1$. Thus each of the intervals in the previous union has length at most $(C_1+1)2^{-k}\leq 16(C_1+1) r$ (since $2^{-k}\le 16 r$).
	
	Now, from the definition of $\Lambda_\alpha$ and the description of $\widetilde W$ we get
	\[
	\widetilde{W} \subset \bigcup_{i=0}^s \pi^{-1}\left(\bigcup_{t\in I_i} \{t\}\times  [x_0-\alpha t, x_0-\alpha t + r]\right).
	\]
	
	For each $i$ denote
	\[
	R_i=\bigcup_{t\in I_i} \{t\}\times  [x_0-\alpha t, x_0-\alpha t + r].
	\]
	It is easy to see that each $R_i$ is a parallelogram with diameter at most $17(C_1+1)r$. Further, the vertical sides of the parallelograms $R_i$, for $i=1,\dots, s-1$, are of the form
	\[
	\{2^{-k-1} + b_j 2^{-k}\} \times [x_0-\alpha (2^{-k-1} + b_j 2^{-k}),\; x_0-\alpha (2^{-k-1} + b_j 2^{-k}) + r]
	\]
	for $j=i,i+1$. Since $x_0-\alpha (2^{-k-1} + b_j 2^{-k}) \in I \ {\rm mod}\ a$, and since $r<2^{-k-3}$, there exists $l\in \mathbb{Z}$ so that
	\[
	[x_0-\alpha (2^{-k-1} + b_j 2^{-k}),\; x_0-\alpha (2^{-k-1} + b_j 2^{-k}) + r] \subset [l 2^{-k} + 3\cdot 2^{-k-3},\; l2^{-k}+5\cdot 2^{-k-3}].
	\]
	In particular, the vertical sides of the parallelograms $R_i$ are contained in the middle half of a slit $s_{l,j}^k$. Therefore, the Lemma \ref{lem:parallelogram} together with the fact that the non-vertical sides of each $R_i$ have slope $\alpha$ gives that the path distance $d_k$ in $[0,1]^2 \setminus \bigcup_{l,j} s_{l,j}^k$ between the interiors of distinct $R_i$ and $R_j$ satisfies
	\[
	d_k(R_i,R_j)\geq 2^{-k-3}>r \quad\text{for all } i\neq j.
	\]
	By the construction of the metric on $\mathbb{M}_\infty$ we obtain $d(\pi_0^{-1}(R_i), \pi_0^{-1}(R_j))>r$. Consequently, each $r$-component of $f^{-1}(W)=\pi_0^{-1}(\tilde{W})$ must lie within a single $\pi_0^{-1}(R_j)$.
	
	Next, $\pi_0$ is $L$-Lipschitz regular by Lemma \ref{lem:pi-regular}. Thus, each $\pi_0^{-1}(R_j)$ is contained within at most $L$ balls $\{B_1,\dots,B_\ell\}$ with diameter at most $17(C_1+1)r$ (where $\ell\le L$). Hence each $r$-component of $\pi_0^{-1}(R_j)$ is contained within the union of at most $L$ such balls s.t. the incidence graph of these balls inflated by two is connected. Therefore the diameter of the union, which contains the component, is at most $100(C_1+1)Lr$. This gives Lipschitz lightness with a constant $C$ satisfying, for example,
	\[
	C \ge 100 (C_1+1) L.
	\]
\end{proof}

With the previous work, Theorem \ref{thm:embeddingL1} follows.
\begin{proof}[Proof of Theorem \ref{thm:embeddingL1}]
	Recall that $\mathbb{M}_\infty$ is a length space, and that $f=\Lambda_{\alpha} \circ \pi_0$ is Lipschitz light by Theorem \ref{thm:liplight}. Therefore, the existence of a bi-Lipschitz embedding follows from \cite[Theorem 1.1]{cheegerkleiner}.
\end{proof}

\begin{remark}
	In fact, the theorems in \cite{cheegerkleiner} show more than just the  bi-Lipschitz embeddability. They show that there is an ``admissible inverse limit system of spaces'' \[
	X_0 \shortleftarrow X_1 \shortleftarrow X_2 \shortleftarrow \cdots \shortleftarrow X_n \shortleftarrow \cdots\]
	so that $\mathbb{M}_\infty$ is bi-Lipschitz to the inverse limit of the spaces $X_n$. The authors find this fact somewhat curious, and slightly counterintuitive. However, since the construction in \cite{cheegerkleiner} is explicit, and since the map $\Lambda_{\alpha} \circ \pi_0$ is also explicit, it should be possible to compute the inverse limit system explicitly. There may be some interest to pursue this question, in order to get a better geometric grasp of the embedding constructed.
\end{remark}
	
	\section{The $\beta$--number approach for dyadic slit carpets}\label{betaNumberChap}
	
	\noindent In this section we study the family of dyadic slit carpets $\mathbb{M}^{a}$. These metric spaces are obtained from the unit square by removing vertical slits whose lengths decrease relative to the dyadic scale; our aim is to relate the geometric size of the slits to (non-)embeddability of $\mathbb{M}^a$ into Euclidean spaces via bi-Lipschitz maps. Compare with the non-embeddability results for slit-type carpets; see \cite{seb2020regular}. The main tool is the theory of Jones' $\beta$--numbers and the corresponding travelling--salesman type estimates.
	
	Let $a=(a_n)_{n=1}^\infty$ be a non-increasing sequence with $0<a_n<1$ for every $n$. For each dyadic square $Q$ of side length $2^{-n}$ remove from its center a vertical slit of length $a_n2^{-n-1}$. Equip the remainder after all of these removals with the path metric, and denote by $\mathbb{M}^a\subset [0,1]^2$ the resulting compact metric space obtained from completing this space. There exists a $1$-Lipschitz projection $\pi:\mathbb{M}^a\to [0,1]^2$. For each of the slits $s$, the points $p\in s$ in its interior get doubled in $\mathbb{M}^a$ and $\pi^{-1}(p)$ consists of two points. For other points, $\pi$ is injective, and we will routinely identify the point with its image in this section.
	
	When $a_n$ is the constant sequence $(1,1,...)$, this construction reduces to the standard slit carpet $\mathbb{M}_\infty$, and when $a_n \to 0$, we obtain a space that infinitesimally resembles Euclidean space. A natural question arises: If $(a_n)_{n\geq 1}$ decays slowly, does $\mathbb{M}^a$ embed into $\ell_2$? We will prove a necessary condition for embeddability: $(a_n)_{n\geq 1} \in \ell_{1+\epsilon}$ for $\epsilon>0$. This gives also another proof that $\mathbb{M}_\infty$ does not bi-Lipschitz embed to $\ell_2$. 
	
	To quantify the local deviation of a function from being linear we use the usual $\beta$-numbers for mappings. If $X $ is a Hilbert space and $f:I\to X$ is continuous on an interval $I\subset\mathbb{R}$ we define
	\[
	\beta(f,I):=\frac{1}{|I|}\inf_{b\in\mathbb{R}^d,\;c\in\mathbb{R}^d}\sup_{x\in I}|f(x)-b-cx|.
	\]
	Thus $\beta(f,I)$ measures the best uniform deviation of $f$ on $I$ from an affine map, normalized by the length of $I$. We will also use the notation
	\[
	h_I:=\frac{1}{|I|}\int_I h(t)\,dt
	\qquad\text{and}\qquad
	\langle h,g\rangle:=\int_0^1 h(t)g(t)\,dt
	\]
	for the average and the $L^2([0,1])$ inner product respectively. Let $\mathcal{D}_n$ denote the collection of closed dyadic intervals of length $2^{-n}$.
	
	The key quantitative input is the one--dimensional ``function'' version of Jones' travelling--salesman estimate (see \cite{schul2007subsets} for the curve versions). A similar result has also appeared in Cheeger's paper \cite{cheeger2012quantitative} and the proof of the following formulation can be adapted from \cite[Section X.2]{garnett}. It is worth to note the scaling $L^2$, which comes from scaling, and has appeared sometimes incorrectly in the literature. This is in contrast to the usual traveling Salesman theorem where only the length appears on the right-hand side.
	
	\begin{lemma}\label{lem:betasum}
		Let $f:[0,1]\to X$ (where X is $\mathbb{R}$ or any Hilbert space) be $L$-Lipschitz. Then
		\[
		\beta(f):=\sum_{n=0}^\infty\sum_{I\in\mathcal{D}_n}\beta(f,4I)^2\,|I|\leq C L^2,
		\]
		where $C>0$ is an absolute constant.
	\end{lemma}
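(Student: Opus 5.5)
We reduce Lemma~\ref{lem:betasum} to the square-function (Littlewood--Paley/Haar) estimate for the derivative. Since $f$ is $L$-Lipschitz into a Hilbert space, it is differentiable almost everywhere (Hilbert spaces have the RNP). Moreover $f(x)=f(0)+\int_0^x h$ with $h=f'\in L^\infty([0,1];X)$ and $\|h\|_\infty\le L$, hence $\|h\|_{L^2}^2\le L^2$. To handle the $4I$ that leaves $[0,1]$, we extend $f$ to $\R$ by $f(x)=f(0)$ for $x<0$ and $f(x)=f(1)$ for $x>1$. The extension is still $L$-Lipschitz, its derivative $h$ is supported in $[0,1]$, and $\|h\|_{L^2(\R)}\le L$.

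\textbf{Step 1: control $\beta$ by an oscillation of $h$.} For an interval $J$, we use the affine competitor $b+cx$ with $c=h_J$, the average of $h$ over $J$, and $b$ chosen to match $f$ at the left endpoint $x_J$ of $J$. Then for $x\in J$,
\[
|f(x)-b-cx|=\Big|\int_{x_J}^x (h-h_J)\Big|\le \int_J |h-h_J|\le |J|^{1/2}\Big(\int_J|h-h_J|^2\Big)^{1/2}.
\]
This gives $\beta(f,J)^2\le \frac{1}{|J|}\int_J|h-h_J|^2$. Applying it with $J=4I$ yields
\[
\beta(f,4I)^2|I|\le \tfrac14\int_{4I}|h-h_{4I}|^2 .
\]
This crude bound is \emph{not} summable as stated, since it loses the cancellation across scales. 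So we refine it: replace the sup bound by an $L^2$ bound on the primitive. The quantity $\int_{x_J}^x(h-h_J)$ is the integral of $h-h_J$ against the indicator $\chi_{[x_J,x]}$. Expanding $h-h_J$ in the Haar basis of $J$,
\[
h-h_J=\sum_{I'\subset J}\langle h,\psi_{I'}\rangle\psi_{I'},
\]
and integrating against $\chi_{[x_J,x]}$, the coefficient on each Haar function is bounded by $|I'|^{1/2}$ times a decay factor. The factor comes from $\int_{x_J}^x\psi_{I'}$ vanishing unless $x\in I'$, so each point $x$ sees only one $I'$ per scale. This gives
\[
\sup_{x\in J}|f(x)-b-cx|\le \sum_{I'\subset J}|\langle h,\psi_{I'}\rangle|\,|I'|^{1/2}\,\mathbf 1[x\in I'],
\]
and hence, by Cauchy--Schwarz with the geometric weights $(|I'|/|J|)^{1/2}$ over scales,
\[
\beta(f,J)^2\lesssim \sum_{I'\subset J}\Big(\frac{|I'|}{|J|}\Big)^{1/2}\frac{|\langle h,\psi_{I'}\rangle|^2}{|I'|}\cdot\frac{|I'|}{|J|}\cdot(\dots),
\]
organized so that each coefficient carries a weight $(|I'|/|J|)^{\epsilon}$.

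\textbf{Step 2: sum over $I$.} In $\sum_I\beta(f,4I)^2|I|$, each Haar coefficient $\langle h,\psi_{I'}\rangle$ appears only for dyadic $I$ with $4I\supset I'$. At each scale above $I'$ there are boundedly many such $I$ (at most $4$ per generation), and each contributes a geometrically decaying weight $2^{-\epsilon m}$ in the generation gap $m$. Summing the geometric series gives
\[
\beta(f)\lesssim \sum_{I'}|\langle h,\psi_{I'}\rangle|^2\le\|h\|_{L^2}^2\le L^2
\]
by Bessel's inequality in $L^2(\R;X)$. Bessel is valid for Hilbert-valued $h$ because Haar expansion is orthogonal coordinatewise.

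\textbf{Main obstacle.} The difficulty lies in Step 1: obtaining the sup-norm bound with a \emph{decaying} weight over subscales rather than the naive oscillation bound. The first thing to try is the pointwise Haar bound above combined with Cauchy--Schwarz using weights $2^{-m/2}$. An alternative is to replace the sup by an $L^2$-average using the Lipschitz bound, via an interpolation of the form $\sup|g|^2\lesssim |J|^{-1}\|g\|_2^2+\|g\|_2\|g'\|_2$. We would follow \cite[Section X.2]{garnett} for the precise bookkeeping.
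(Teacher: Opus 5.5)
The genuine gap is that you apply the Haar expansion with $J=4I$, and $4I$ is not a dyadic interval. If $|I|=2^{-n}$, the endpoints of $4I$ are odd multiples of $2^{-n-1}$. The dyadic subintervals of $4I$ are therefore exactly the subintervals of its eight dyadic pieces $K_1,\dots,K_8$ of length $|I|/2$. So the identity $h-h_{4I}=\sum_{I'\subset 4I}\langle h,\psi_{I'}\rangle\psi_{I'}$ is false: it omits the step function $\sum_i (h_{K_i}-h_{4I})\chi_{K_i}$, whose primitive is of size up to $\sim |I|\max_i|h_{K_i}-h_{4I}|$.

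Bounding that term by the oscillation of $h$ on $4I$ just returns the non-summable estimate you discarded. You also cannot enlarge $4I$ to a standard dyadic interval of comparable length, since $4I$ may straddle a dyadic point of much coarser generation (e.g.\ $I=[\frac{1}{2}-2^{-n},\frac{1}{2}]$). If instead you meant the Haar system generated by $4I$ itself, its top seven functions are non-dyadic and change with $I$. Step 2's appeal to Bessel's inequality for a single orthonormal system then does not cover them.

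For dyadic $J$ your mechanism is correct, and the placeholder ``$(\dots)$'' can be made concrete. With $c_{I'}=\langle h,\psi_{I'}\rangle$ and $I'_k(x)$ the generation-$k$ subinterval of $J$ containing $x$, the tent bound gives, pointwise in $x$, $|U(x)|\le\frac{1}{2}\sum_{k\ge0}(2^{-k}|J|)^{1/2}|c_{I'_k(x)}|$. Cauchy--Schwarz with weights $2^{-k/4}\cdot 2^{-k/4}$ then yields $\beta(f,J)^2|J|\lesssim\sum_{I'\subseteq J}(|I'|/|J|)^{1/2}|c_{I'}|^2$. Summing over dyadic $J$ and applying Bessel finishes, so your ``main obstacle'' is not one and no interpolation is needed.

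To pass from $4I$ to dyadic intervals, run this argument in each nested shifted grid $\mathcal{D}^t=\{2^{-k}([0,1)+m+(-1)^k t): k,m\in\mathbb{Z}\}$, $t\in\{0,\frac{1}{3},\frac{2}{3}\}$. This only uses $h\in L^2(\R;X)$, which your extension provides. By the one-third trick, each $4I$ lies in some $J'\in\bigcup_t\mathcal{D}^t$ with $|J'|\le C|I|$. Since $|J|\beta(f,J)$ increases under inclusion, $\beta(f,4I)^2|I|\lesssim\beta(f,J')^2|J'|$, and each $J'$ is chosen for only boundedly many $I$.

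Alternatively, keep the pieces $K_i$ and bound $|h_{K_i}-h_{K_{i+1}}|$ by the Haar coefficients along the two ancestor chains. These chains hug the midpoint of the common ancestor, so each coefficient is used boundedly often per generation gap, giving $\sum|K|\,|h_K-h_{K'}|^2\lesssim\|h\|_2^2$ over adjacent equal-length dyadic pairs. With either repair your proof is complete. Note that the paper itself gives no argument for this lemma and only points to Garnett--Marshall, Section X.2.
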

	
		\noindent Here, $4I$ is the interval with the same center as $I$ and length $4|I|$. Our analysis of $\mathbb{M}^a$ begins by applying this estimate fiberwise to the restrictions of a bi-Lipschitz map $f:\mathbb{M}^a$ to vertical segments in the slit carpet. For a related proof see the splitting result in \cite{dsebsplit}. 

		For $t\in[0,1]$ we denote by $\gamma_t:[0,1]\to\mathbb{M}^a$ the vertical path $\gamma_t(s)=(t,s)$ whenever this path is well defined in $\mathbb{M}^a$. This fails only for a countable set of exceptional $t$ corresponding to the vertical projections of endpoints of removed slits. We shall study $\beta(F\circ\gamma_t,I)$ for dyadic intervals $I$.
	
	The following alternative captures the dichotomy that appears near a slit: either the image of the two sides of the slit are close, or at least one of the adjacent vertical fibers exhibits a definite amount of nonlinearity as measured by the $\beta$-numbers.
	
\begin{lemma}\label{lem:alternative}
	Let $Q=[a,b]\times[c,d]\in\mathcal{D}_n$ be a dyadic square of side length $2^{-n}$, and let $F:\mathbb{M}^a\to\mathbb{R}$ be $L$-Lipschitz. Denote by $m_Q^\pm$ the two midpoints on the opposite sides of the central vertical slit of $Q$. Set
	\[
	I:=\left[\frac{c+d}{2}-\frac{a_n}{2}2^{-n-1},\,\frac{c+d}{2}+\frac{a_n}{2}2^{-n-1}\right].
	\]
	Then, either
	\begin{enumerate}
		\item $|F(m_Q^+)-F(m_Q^-)|\leq a_n2^{-n-2}$, or
		\item for every $t\in[0, L^{-1} a_n2^{-n-5}]$ (with $m=(a+b)/2$) one has
		\[
		\beta(F\circ\gamma_{m-t},I)^2+\beta(F\circ\gamma_{m+t},I)^2\geq 2^{-10}.
		\]
	\end{enumerate}
\end{lemma}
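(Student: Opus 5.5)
We argue by contrapositive: assume (1) fails, so $|F(m_Q^+)-F(m_Q^-)|> a_n2^{-n-2}$, and fix $t\in[0,L^{-1}a_n2^{-n-5}]$. Write $g_\pm := F\circ\gamma_{m\pm t}$. Note that $\gamma_{m-t}$ runs along the left of the slit and $\gamma_{m+t}$ along the right (for $t=0$ we interpret $\gamma_{m\pm 0}$ as the two lifts of the slit side). The curves $g_\pm$ are defined on $[0,1]$, are $L$-Lipschitz, and $I$ is exactly the vertical extent of the slit. Set $p=(c+d)/2$ and let $y_0,y_1$ be the endpoints of $I$, so $|I|=a_n2^{-n-1}$.

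\textbf{Step 1 (endpoints agree).} At the endpoints of $I$ the two fibres are joined through the slit's tips by horizontal segments of length $2t$. Hence $|g_+(y_i)-g_-(y_i)|\le 2Lt\le a_n2^{-n-4}$ for $i=0,1$.

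\textbf{Step 2 (midpoints differ).} Similarly $|g_\pm(p)-F(m_Q^\pm)|\le Lt\le a_n2^{-n-5}$. Therefore
\[
|g_+(p)-g_-(p)|\ge a_n2^{-n-2}-a_n2^{-n-4}=\tfrac{3}{16}a_n2^{-n}.
\]

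\textbf{Step 3 (affine approximation).} Let $A_\pm(y)=b_\pm+c_\pm y$ be near-optimal affine maps with $\sup_I|g_\pm-A_\pm|\le 2\beta_\pm|I|$, where $\beta_\pm=\beta(g_\pm,I)$. Then $A_+-A_-$ is affine, so its value at the midpoint $p$ is the average of its values at $y_0$ and $y_1$. Using Step 1,
\[
|A_+(y_i)-A_-(y_i)|\le 2Lt+2(\beta_++\beta_-)|I| \quad (i=0,1),
\]
and hence the same bound holds at $p$. Combining with Step 2,
\[
\tfrac{3}{16}a_n2^{-n}\le |g_+(p)-g_-(p)|\le 2Lt+4(\beta_++\beta_-)|I|\le a_n2^{-n-4}+(\beta_++\beta_-)\,a_n2^{-n+1}.
\]

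\textbf{Step 4 (conclusion).} Dividing by $a_n2^{-n}$ gives $\beta_++\beta_-\ge (3/16-1/16)/2=1/16$. Since $\beta_+^2+\beta_-^2\ge(\beta_++\beta_-)^2/2$, we obtain $\beta_+^2+\beta_-^2\ge 2^{-9}\ge 2^{-10}$, which is alternative (2).

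The constants are loose enough that exact infimum attainment does not matter; the factor $2$ slack in Step 3 absorbs it. If $F$ were Hilbert-valued the same argument applies verbatim with norms. The main delicate point is Step 1: one must check that the tips of the slit, at heights $y_0,y_1$, are reachable from both fibres by short horizontal paths in $\mathbb{M}^a$. A finer-scale slit could block the segment $\{y_i\}\times[m-t,m+t]$. If so, replace it by a path hugging the slit tip, whose length is still $O(t)$ after accounting for those smaller slits; this only changes constants. Alternatively, take $\gamma$ to be the path along the slit edges to the tip, which is $O(t)$ in length with a constant absorbed by the margin between $2^{-9}$ and $2^{-10}$.
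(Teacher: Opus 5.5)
Your argument is correct and is essentially the paper's proof run in the contrapositive direction. The paper assumes (2) fails for some $t$ and derives (1) from the same three facts you use: the fibres $\gamma_{m\pm t}$ are joined by horizontal segments of length $2t$ at the endpoints of $I$; $\gamma_{m\pm t}(p)$ is within $t$ of $m_Q^\pm$; and the difference of the two affine approximants at the centre of $I$ is the average of its values at the endpoints. Your bookkeeping even yields $2^{-9}$.

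The paper simply takes the distance at the endpoints to be $2t$. Your fallback in the last paragraph is too loose, because the constants in the lemma are fixed. Your chain only tolerates a detour of length up to $(6-2\sqrt{2})\,t\approx 3.17\,t$, so an unspecified $O(t)$ path is not enough. Instead, you should check directly that the segments $[m-t,m+t]\times\{y_0\}$ and $[m-t,m+t]\times\{y_1\}$ at the tip heights meet no finer slit. This can be checked, for example, when the $a_n$ are reciprocals of integers, after noting that (1) holds automatically if $L\le 1/2$.
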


\begin{proof}
	Assume that the second alternative fails, and let $\delta=2^{-5}$. Then for some
	\[
	t\in \left[0,\frac{1}{L}a_n2^{-n-5}\right]
	\]
	we have
	\[
	\beta(F\circ\gamma_{m-t},I)\leq 2^{-5}
	\qquad\text{and}\qquad
	\beta(F\circ\gamma_{m+t},I)\leq 2^{-5}.
	\]
	
	Thus there are constants $c_0^\pm,c_1^\pm$ such that
	\[
	|F(\gamma_{m\pm t}(y))-c_0^\pm-c_1^\pm y|\leq a_n2^{-n-5}
	\]
	for all $y\in I$. Set
	\[
	u:=\frac{c+d}{2}-\frac{a_n}{2}2^{-n-1},
	\qquad
	v:=\frac{c+d}{2}+\frac{a_n}{2}2^{-n-1},
	\qquad
	y_0:=\frac{u+v}{2}=\frac{c+d}{2}.
	\]
	Applying this to $y=u,y_0,v$, we get
	\begin{align}
		|F(\gamma_{m\pm t}(u))-c_0^\pm-c_1^\pm u| &\leq a_n2^{-n-5}, \nonumber \\
		|F(\gamma_{m\pm t}(v))-c_0^\pm-c_1^\pm v| &\leq a_n2^{-n-5}, \nonumber \\
		|F(\gamma_{m\pm t}(y_0))-c_0^\pm-c_1^\pm y_0| &\leq a_n2^{-n-5}. \label{eq:betanumberest}
	\end{align}
	
	Next, consider the construction of the dyadic slit carpet, and observe that no slit intersects the horizontal sides of a dyadic square. Since
	\[
	t\in \left[0,\frac{1}{L}a_n2^{-n-5}\right]
	\]
	and, for example, $d(\gamma_{m+t}(u),\gamma_{m-t}(u))=2t$, using the Lipschitz bound for $F$ we get:
	\begin{align}
		d(\gamma_{m+t}(u),\gamma_{m-t}(u)) &\leq \frac{1}{L}a_n2^{-n-4}
		\Longrightarrow
		|F(\gamma_{m+t}(u))-F(\gamma_{m-t}(u))| \leq a_n2^{-n-4}, \nonumber \\
		d(\gamma_{m+t}(v),\gamma_{m-t}(v)) &\leq \frac{1}{L}a_n2^{-n-4}
		\Longrightarrow
		|F(\gamma_{m+t}(v))-F(\gamma_{m-t}(v))| \leq a_n2^{-n-4}, \nonumber \\
		d(\gamma_{m+t}(y_0),m_Q^+) &\leq \frac{1}{L}a_n2^{-n-5}
		\Longrightarrow
		|F(\gamma_{m+t}(y_0))-F(m_Q^+)| \leq a_n2^{-n-5}, \nonumber \\
		d(\gamma_{m-t}(y_0),m_Q^-) &\leq \frac{1}{L}a_n2^{-n-5}
		\Longrightarrow
		|F(\gamma_{m-t}(y_0))-F(m_Q^-)| \leq a_n2^{-n-5}. \label{eq:lipboundf}
	\end{align}
	
	We still need one more estimate, which is obtained by combining the triangle inequality (TI) with \eqref{eq:betanumberest} and \eqref{eq:lipboundf}, and the fact that $y_0=\frac{1}{2}(u+v)$:
	\begin{align}
		|c_0^+ + c_1^+ y_0 - (c_0^- + c_1^- y_0)| 
		&\overset{\rm TI}{\leq}
		\frac{1}{2}|c_0^+ + c_1^+ u - (c_0^- + c_1^- u)|
		+ \frac{1}{2}|c_0^+ + c_1^+ v - (c_0^- + c_1^- v)| \nonumber \\
		&\overset{\rm TI}{\leq}
		\frac{1}{2}|c_0^+ + c_1^+ u - F(\gamma_{m+t}(u))|
		+ \frac{1}{2}|F(\gamma_{m-t}(u))-(c_0^- + c_1^- u)| \nonumber \\
		&\qquad +\frac{1}{2}|c_0^+ + c_1^+ v - F(\gamma_{m+t}(v))|
		+ \frac{1}{2}|F(\gamma_{m-t}(v))-(c_0^- + c_1^- v)| \nonumber \\
		&\qquad \qquad + \frac{1}{2}|F(\gamma_{m+t}(u))-F(\gamma_{m-t}(u))|
		+ \frac{1}{2}|F(\gamma_{m+t}(v))-F(\gamma_{m-t}(v))| \nonumber \\
		&\overset{\eqref{eq:betanumberest},\eqref{eq:lipboundf}}{\leq} a_n2^{-n-3}. \label{eq:midpointslines}
	\end{align}
	
	Using these, and the triangle inequality, we get the following estimate for how much $F$ stretches the midpoints $m_Q^+,m_Q^-$:
	\begin{align*}
		|F(m_Q^+)-F(m_Q^-)| 
		&\overset{\rm TI}{\leq}
		|F(m_Q^+)-F(\gamma_{m+t}(y_0))|
		+|F(m_Q^-)-F(\gamma_{m-t}(y_0))| \\
		&\qquad +|F(\gamma_{m+t}(y_0))-F(\gamma_{m-t}(y_0))| \\
		&\overset{\eqref{eq:lipboundf}}{\leq}
		a_n2^{-n-4}+|F(\gamma_{m+t}(y_0))-F(\gamma_{m-t}(y_0))| \\
		&\overset{\rm TI}{\leq}
		a_n2^{-n-4}
		+|F(\gamma_{m+t}(y_0))-c_0^+ -c_1^+ y_0| \\
		&\qquad + |F(\gamma_{m-t}(y_0))-c_0^- -c_1^- y_0|
		+ |c_0^+ + c_1^+ y_0 - (c_0^- + c_1^- y_0)| \\
		&\overset{\eqref{eq:betanumberest}}{\leq}
		a_n2^{-n-4}+ a_n2^{-n-4} + a_n2^{-n-3} \\
		&\overset{\eqref{eq:midpointslines}}{\leq}
		a_n2^{-n-2}.
	\end{align*}
    This gives the first alternative, and completes the proof.
\end{proof}

By assuming that we have a $(1,L)$-bi-Lipschitz embedding $F:\mathbb{M}^a\to \ell_2$, we can prevent the first alternative in the previous lemma. Thus, $\beta$-numbers are large at least for some vertical lines and some scales. The next step is to integrate over $t$ and sum over all dyadic intervals. For this integration step, we need a small lemma on the existence of a particular type of measure on $[0,1]$.

    \begin{lemma}\label{lem:measure} Let $\epsilon \in (0,1)$.
        There exist a measure $\mu$ on $[0,1]$ and a constant $C>0$, so that for every dyadic interval $I$ with midpoint $c$ and any $\delta\in (0,1)$ we have
        \begin{equation}\label{eq:lb}
        \mu([c-\delta|I|,c+\delta |I|])\geq C\delta^\epsilon \mu(I).
        \end{equation}
    \end{lemma}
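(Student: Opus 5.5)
The plan is to build $\mu$ as a superposition, over all dyadic scales, of measures that each concentrate their mass at the centre of a single dyadic interval. The singularity at each centre is tuned to the exponent $\epsilon$, so that the $\delta$-window around $c_I$ captures a $\delta^{\epsilon}$ share of the relevant mass. I have not closed this argument: the weight-balancing step below has a real conflict, and I identify it explicitly.

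\emph{Building block.} For a dyadic interval $J$ with centre $c_J$, let $\nu_J$ be the probability measure on $J$ with density proportional to $|x-c_J|^{\epsilon-1}$. A one-line integration gives
\[
\nu_J([c_J-\delta|J|,c_J+\delta|J|])=(2\delta)^{\epsilon}
\]
for $\delta\le 1/2$, which is exactly the profile required in \eqref{eq:lb}. Set
\[
\mu=\sum_{n\ge 0}\sum_{J\in\mathcal D_n} w_n\,\nu_J .
\]
The weights $w_n$ are to be chosen so that $\sum_n 2^n w_n<\infty$, which makes $\mu$ finite.

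\emph{Decomposing $\mu(I)$.} Fix $I\in\mathcal D_n$ and split $\mu(I)$ into three parts.
\begin{enumerate}
\item The own term is $w_n\nu_I(I)=w_n$. By the building-block identity it contributes at least $w_n(2\delta)^{\epsilon}$ to the window $[c-\delta|I|,c+\delta|I|]$.
\item The descendant terms total $\sum_{k>n}2^{k-n}w_k$.
\item The ancestor terms $w_k\nu_{J_k}(I)$, $k<n$, are each at most $w_k\,2^{-(n-k)\epsilon}$ up to constants.
\end{enumerate}
If the own term is comparable to the sum of all three parts, then \eqref{eq:lb} follows immediately with a uniform constant $C$.

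\emph{The obstacle: balancing the weights.} The two requirements pull in opposite directions.
\begin{itemize}
\item To make the descendants negligible against $w_n$, we need roughly $w_k\lesssim 2^{-k(1+\eta)}$.
\item To make the ancestors negligible against $w_n$, we need $w_k$ to decay more slowly than $2^{-k\epsilon}$.
\end{itemize}
With $\epsilon<1$ these cannot both hold for a pure power law.

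The worst ancestor case is an $I$ whose endpoint is the centre of an ancestor, so that the ancestor's singularity sits at the edge of $I$. There the ancestor mass in $I$ is large, but its share in the central $\delta$-window of $I$ is only of order $\delta$, not $\delta^{\epsilon}$.

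\emph{What I would try first.} I would replace the fixed singular profile with a multiplicative cascade. At each generation, every interval's mass is redistributed among its four quarters, giving extra weight to the two quarters adjacent to its centre. The aim is that, along the chain of nested intervals shrinking to $c_I$, each step retains at least a fixed fraction $2^{-\epsilon}$ of the mass.

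The difficulty is that $c_I$ is an endpoint of both children of $I$. The enrichment rule must therefore favour the endpoint that is the parent's centre at every generation without being undone one generation later. I would verify this by an explicit product formula for $\mu([c,c+2^{-m}|I|])/\mu(I)$. I expect this bookkeeping step to be the main work. If it fails, I would weaken the goal to a $\mu$ for which \eqref{eq:lb} holds only up to a summable or logarithmic loss, and check whether the later integration-in-$t$ argument tolerates that loss.
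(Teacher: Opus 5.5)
This is not yet a proof, as you acknowledge. The lemma is an existence statement, and you stop before exhibiting a $\mu$ for which \eqref{eq:lb} is verified. A version with a logarithmic loss would be a different statement.

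Your diagnosis of the superposition $\sum w_n\nu_J$ is correct, and in fact no choice of weights rescues it. The interval $I=[\frac12,\frac12+2^{-n}]$ receives mass $\gtrsim w_0 2^{-n\epsilon}$ from $\nu_{[0,1]}$, and only an $O(\delta)$ share of that lies in the $\delta$-window around $c_I$. So you would need $w_n\gtrsim w_0 2^{-n\epsilon}$, which contradicts $\sum_n 2^n w_n<\infty$.

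The cascade you would try next fails for a structural reason, not a bookkeeping one: it enriches the wrong pieces. Below the scale of $I$, the point $c_I$ is an endpoint of every grid interval abutting it. So $\mu([c_I,c_I+4^{-k}|I|])$ is controlled by the weight of the \emph{end} quarters. Suppose the two central quarters get weight $p>\frac14$ and the end quarters get $q<\frac14$. Then for a $4$-adic $I$ you get $\mu([c_I,c_I+4^{-k}|I|])=p\,q^{k-1}\mu(I)$, which decays faster than Lebesgue measure. This is exactly the ``undone one generation later'' effect you anticipated, and no choice of $p>q$ avoids it.

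The missing idea is to concentrate mass at \emph{endpoints} from the outset. An end piece of an end piece still abuts the same point, so the gain compounds along the whole chain shrinking to $c_I$, and every dyadic midpoint is such an endpoint at all finer scales. The paper subdivides each interval into $2^m$ equal pieces with $m\epsilon\ge 2$. Each of the two end pieces gets the fraction $\frac{1-\eta}{2}\ge\frac14$ of the parent's mass, and $\eta$ is spread evenly over the interior pieces. Many pieces are needed: with four pieces and equal end weights $p<\frac12$ one would need $p\ge 4^{-\epsilon}$, which is impossible for $\epsilon\le\frac12$.

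The argument then runs as follows. Equal end weights make $\mu$ doubling. The midpoint $c$ of a dyadic $I$ is the left endpoint of a $2^m$-adic interval $\tilde I$ with $|\tilde I|$ comparable to $|I|$, so $\mu(\tilde I)\gtrsim\mu(I)$. Moreover $\mu([c,c+2^{-km}|\tilde I|])\ge(\frac{1-\eta}{2})^k\mu(\tilde I)\ge 2^{-km\epsilon}\mu(\tilde I)$. This gives \eqref{eq:lb} after rounding $\delta$ to the grid.

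Incidentally, your ``retain $2^{-\epsilon}$ per halving'' idea works verbatim if every interval simply favours its \emph{left} child, with weights $2^{-\epsilon}$ and $1-2^{-\epsilon}$. Since $c_I$ is the left endpoint of the right half of $I$, one gets $\mu([c_I,c_I+2^{-k-1}|I|])=(1-2^{-\epsilon})2^{-k\epsilon}\mu(I)$. That already proves the lemma as stated. What the paper's symmetric weights buy beyond this is that $\mu$ is doubling and comparable to its reflection about grid points. That matters when \eqref{eq:lb} is later combined with the paired bound for $\gamma_{m-t}$ and $\gamma_{m+t}$, and the one-sided cascade lacks it.
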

    \begin{proof}
    We will construct the measure $\mu$ as a self-similar measure on $[0,1]$.   Let $m\in \N$ be such that $m\epsilon \geq 2$, and let $\mathcal{I}_0=\{[0,1]\}$. For every $I=[a,b]$, let $\mathcal{I}(I)=\{[a+2^{-m}(b-a)k, a+2^{-m}(b-a)(k+1)] : k=0,\dots, 2^{m}-1\}$ be its subdivision to $2^m$ equal pieces. Let $J_k(I)=[a+2^{-m}(b-a)i, a+2^{-m}(b-a)(k+1)]$, $k=0,\dots, 2^{m}-1$, denote these pieces, and recursively construct $\mathcal{I}_{n+1}=\bigcup_{I\in \mathcal{I}_n} \mathcal{I}(I)$. Choose a parameter $\eta\in (0,1/4)$. With this choice $(\frac{1-\eta}{2})\geq 1/4$. There exists a unique measure $\mu$ on $[0,1]$ that satisfies the following
    \begin{enumerate}
        \item $\mu([0,1])=1$
        \item If $\mu(I)$ has been defined for all $I\in \mathcal{I}_n$, then for $J=J_k(I)$ we have
        \[
        \mu(J)=\begin{cases} \frac{1-\eta}{2}\mu(I) & k=0,2^m-1 \\\frac{\eta}{2^m-2} \mu(I) & k=1,\dots, 2^m-2.\end{cases}
        \]
    \end{enumerate}
    A classic argument using weak convergence shows that such a measure exists, see eg. \cite[Proposition 1.7. and Section 17.3]{falconer}.  It follows from \cite[Proposition 3.3.]{yung}, that $\mu$ is doubling: $\mu(I)\sim \mu(I')$ for any two dyadic intervals with $|I|/|I'|\leq 2$ and $I\cap I'\neq\emptyset$.

    Let $I$ be any doubling interval, and let $l\in \N$ be such that $2^{-m(l-1)}\geq|I|\geq 2^{-ml}$. Choose an interval of length $\widetilde{I}$ that contains $c$ as its left endpoint. By repeatedly applying doubling, we get $\mu(\widetilde{I})\geq L^{-1} \mu(I)$ for some $L\geq 1$ dependent on $m$, but independent of $I$ or $k$.

    By doubling, it is sufficient to prove Equation \eqref{eq:lb} for $\delta=2^{-km}$ and $k\in \N$ and $k\geq 1$. By induction, and the definition of the measure, we get
    \[
    \mu([c-\delta |I|,c+\delta |I|])\geq\mu([c, c+\delta |\widetilde{I}|]) \geq \left(\frac{1-\eta}{2}\right)^k \mu(\widetilde{I})\geq 2^{-2k } \mu(\widetilde{I})\geq 2^{-mk\epsilon} \mu(\widetilde{I}) \geq L^{-1} \delta^\epsilon \mu(I).
    \]
    \end{proof}
	\medskip

	\medskip
We now integrate the estimate on $\beta$ numbers obtained from Lemma \ref{lem:alternative} over $t$ with respect to the measure $\mu$.
	
		\begin{theorem} Let $\epsilon >0$ and assume that $a=(a_i)_{i\in\N}$ is a non-increasing sequence with $a_i\in (0,1)$. If $\mathbb{M}^{a}$ admits a bi-Lipschitz embedding into $\ell_2$, then $a\in \ell_{1+\epsilon}.$ 
	\end{theorem}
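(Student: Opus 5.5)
Let $F:\mathbb{M}^a\to\ell_2$ be $(1,L)$-bi-Lipschitz. The argument combines three earlier results: Lemma \ref{lem:alternative} applied to every dyadic square, the $\beta$-number sum of Lemma \ref{lem:betasum} along vertical fibers, and integration in $t$ against the measure $\mu$ of Lemma \ref{lem:measure} (with the exponent $\epsilon$ there replaced by a small $\epsilon'$ related to $\epsilon$).

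\emph{Step 1: the dichotomy is forced.} For each $Q\in\mathcal{D}_n$ the two copies $m_Q^\pm$ of the slit midpoint satisfy $d(m_Q^+,m_Q^-)\ge a_n2^{-n-1}$, since a path between them must go around half the slit. The lower bi-Lipschitz bound therefore gives $|F(m_Q^+)-F(m_Q^-)|\ge a_n2^{-n-1}>a_n2^{-n-2}$, which excludes alternative (1).

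Lemma \ref{lem:alternative} is stated for real-valued $F$, but its proof uses only the triangle inequality. It therefore applies verbatim to $\ell_2$-valued $F$, with $\beta$ defined through affine maps into $\ell_2$. Alternative (2) then holds: for every $t\in[0,L^{-1}a_n2^{-n-5}]$,
\[
\beta(F\circ\gamma_{m_Q-t},I_Q)^2+\beta(F\circ\gamma_{m_Q+t},I_Q)^2\ge 2^{-10},
\]
where $I_Q$ is the vertical interval of length $a_n2^{-n-1}$ centered at the height of the center of $Q$.

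\emph{Step 2: move to dyadic intervals.} $I_Q$ is not dyadic, so we pass to a dyadic interval $J$ of length comparable to $a_n2^{-n}$ (up to a factor $4$) with $I_Q\subset 4J$. The $\beta$-number is monotone up to the normalization: $\beta(g,4J)\ge \frac{|I_Q|}{4|J|}\beta(g,I_Q)\gtrsim\beta(g,I_Q)$. Since $a_n=1/n_i$, such $J$ exist with uniform constants. This yields
\[
\beta(F\circ\gamma_{s},4J)^2\gtrsim 1
\]
for $s=m_Q-t$ or $s=m_Q+t$.

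\emph{Step 3: integrate over $t$.} Take a horizontal column of squares $Q\in\mathcal{D}_n$ whose $x$-projection is the dyadic interval $K$ with center $m_Q$. Set $\delta=L^{-1}a_n2^{-5}$, so that the window $[m_Q-\delta|K|,m_Q+\delta|K|]$ is exactly the range of admissible $s=m_Q\pm t$. By the dichotomy, at least one of $m_Q\pm t$ is bad for each $t$ in this window. Using Lemma \ref{lem:measure} (applied with the symmetric window, and using that $\mu$ restricted near $c$ is comparable on both sides by doubling), the set of bad $s$ in this window has $\mu$-measure at least $C\delta^{\epsilon'}\mu(K)$, uniformly up to constants.

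Summing over the $2^n$ squares in the column (one $J$ each, $|J|\sim a_n2^{-n}$) and over columns, we get
\begin{align*}
\int_0^1 \beta(F\circ\gamma_s)\,d\mu(s)
&\ge \sum_n\sum_{K\in\mathcal{D}_n}\ \sum_{Q\text{ over }K} \int \beta(F\circ\gamma_s,4J_Q)^2|J_Q|\,d\mu(s)\\
&\gtrsim \sum_n \sum_K 2^n\, a_n2^{-n}\,(L^{-1}a_n)^{\epsilon'}\mu(K)
= L^{-\epsilon'}\sum_n a_n^{1+\epsilon'}.
\end{align*}
Here we used $\sum_{K\in\mathcal{D}_n}\mu(K)=1$. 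Distinct $Q$ give distinct pairs $(s,J)$ only if the $J$'s for different scales $n$ do not coincide. Since $|J|\sim a_n2^{-n}$ and $a_n$ is non-increasing, each dyadic $J$ arises from at most boundedly many $n$ per point, or else we absorb the overlap into the constant. This is the step I would check carefully.

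\emph{Step 4: conclude.} For every $s$ outside the countable exceptional set, $F\circ\gamma_s$ is $L$-Lipschitz, so Lemma \ref{lem:betasum} gives $\beta(F\circ\gamma_s)\le CL^2$. Assume $\mu$ has no atoms (choose $\eta>0$). Integrating against the probability measure $\mu$ then gives $\sum_n a_n^{1+\epsilon'}\lesssim L^{2+\epsilon'}<\infty$. Since $\epsilon'$ is arbitrary, $a\in\ell_{1+\epsilon}$ for every $\epsilon>0$.

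\emph{Main obstacle.} The delicate point is Step 3. It requires a symmetric lower bound from Lemma \ref{lem:measure} valid uniformly at every dyadic center $m_Q$. It also requires controlling multiplicity, so that the dyadic intervals $J$ attached to different scales $n$ are not counted many times in Lemma \ref{lem:betasum}. This is exactly why $\mu$ must give windows of relative width $\delta\sim a_n$ mass $\delta^{\epsilon'}$ rather than $\delta$. With Lebesgue measure one would only get $\sum a_n^2<\infty$, and the point of the construction is to improve this to the exponent $1+\epsilon$.
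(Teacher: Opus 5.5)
Your proposal is correct and is essentially the paper's proof: you rule out alternative (1) by the lower bi-Lipschitz bound, pass to a dyadic $J$ with $|J|\in(a_n2^{-n-1},a_n2^{-n}]$ and $I_Q\subset 4J$, integrate the dichotomy against $\mu$ over the windows $[c-\delta|K|,c+\delta|K|]$ with $\delta=L^{-1}a_n2^{-5}$ using Lemma~\ref{lem:measure}, sum over the $2^n$ squares of each column and over $n$, and compare with Lemma~\ref{lem:betasum}. The multiplicity worry disappears as in the paper, since $a_n$ non-increasing makes the levels $m_n$ with $|J|=2^{-m_n}$ strictly increasing; the one correction concerns the step you flag, where getting $\mu(B)\gtrsim\mu(W)$ from $B\cup R_c(B)\supseteq W$ ($R_c$ the reflection about the window center $c$) needs more than doubling, namely that near each dyadic point $c$ the self-similar $\mu$ is a constant multiple of its reflection about $c$, which holds for this specific $\mu$ by the $x\mapsto 1-x$ symmetry of its weights --- a point the paper's ``integrate and use \eqref{eq:lb}'' also passes over.
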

	\begin{proof}
    Let $\mu$ be the measure constructed in Lemma \ref{lem:measure}.
		Assume that $f:\mathbb{M}^{a}\to \R^N$ is $(1,L)$-bi-Lipschitz. The proof proceeds by applying Lemma~\ref{lem:alternative} on each dyadic scale and integrating the resulting lower bounds over the relevant horizontal intervals. Since $f$ is $L$-Lipschitz, we get
		\begin{equation}\label{eq:betaintup}
		\int_0^1 \beta(f\circ \gamma_t) d\mu \leq C_1 L^2 
		\end{equation}
        for some constant $C_1>0$.
		
		Next, fix $n\in \N$. For every $k\in \{0,\dots, 2^n-1\} $ consider $t\in [0,a_nL^{-1}2^{-n-5}]$. The function $f$ is $1$-Lipschitz, and (1) in Lemma \ref{lem:alternative} cannot occur for any dyadic square. Thus, for every $l=0,\dots, 2^n-1$, we can apply (2) to the rectangle $[k2^{-n}, (k+1)2^{-n}]\times [2^{-n-1}(2l+1-a_n/2), 2^{-n-1}(2l+1+a_n/2)]$ and obtain
		\begin{align*}
			\beta\left(f\circ \gamma_{(k+\frac{1}{2})2^{-n}-t}, \left[2^{-n-1}(2l+1-a_n/2), 2^{-n-1}(2l+1+a_n/2)\right]\right)^2 + \\
			\beta\left(f\circ \gamma_{(k+\frac{1}{2})2^{-n}+t}, \left[2^{-n-1}(2l+1-a_n/2), 2^{-n-1}(2l+1+a_n/2)\right]\right)^2  &\geq 2^{-10}.
		\end{align*}

        For each $l$ let $I_{l}$ be a dyadic interval with length $a_n 2^{-n-1} < |I_l| \leq a_n2^{-n}$ which intersects $\left[2^{-n-1}(2l+1-a_n/2), 2^{-n-1}(2l+1+a_n/2)\right]$. We have $\left[2^{-n-1}(2l+1-a_n/2), 2^{-n-1}(2l+1+a_n/2)\right] \subset 4I_{k,l}$ and thus $\beta(f\circ \gamma_t, 4I_{l})\geq 8^{-1}\beta(f\circ \gamma_t, \left[2^{-n-1}(2l+1-a_n/2), 2^{-n-1}(2l+1+a_n/2)\right])$. Thus,
		\[
			\beta\left(f\circ \gamma_{(k+\frac{1}{2})2^{-n}-t}, I_{l}\right)^2 + 
			\beta\left(f\circ \gamma_{(k+\frac{1}{2})2^{-n}+t}, I_{l}\right)^2  \geq 2^{-13}.
		\]
		Integrate this over $t\in [0,a_nL^{-1}2^{-n-5}]$ and use \eqref{eq:lb}, to get a constant $C_2>0$ s.t.
		\begin{align*}
		\int_{(k+1/2-a_n2^{-5}L^{-1})2^{-n}}^{(k+1/2+a_n2^{-5}L^{-1})2^{-n}} \beta(f\circ \gamma_{t}, I_{l})^2 dt &\geq 2^{-13} \mu([(k+1/2-a_n2^{-5}L^{-1})2^{-n}, (k+1/2+a_n2^{-5}L^{-1})2^{-n}])\\
        &\geq C_2^{-1} a_n^\epsilon \mu([k2^{-n}, (k+1)2^{-n}]),
		\end{align*}
       
		Sum this over $k=0,\dots, 2^{n}-1$, we get 
		\[
		\int_0^1 \beta(f\circ \gamma_{t}, I_{l})^2 d\mu \geq   C_2^{-1} a_n^\epsilon.
		\]
        Let $m_n$ be such that $|I_l|=2^{-m_n}$. Notice that for $l=0,\dots, 2^{n}-1$, the intervals $I_l$ are subsets of different dyadic intervals at level $n$ and thus distinct. By multiplying this by the length of $|I_l|\geq a_n 2^{-n-1}$, and summing over $l=0, \dots, 2^{n}-1$, we thus get
        \[
		\int_0^1 \sum_{J\in \mathcal{D}_{m_n}} \beta(f\circ \gamma_{t}, J)^2|J| d\mu \geq   2^{-1}C_2^{-1} a_n^{1+\epsilon}.
		\]
		Sum this over $n$ to get
		\[
		\int_0^1 \sum_{n\in\N}\sum_{J\in \mathcal{D}_{m_n}} \beta(f\circ \gamma_{t}, J)^2|J| d\mu \gtrsim \sum_{n=1}^\infty a_n^{1+\epsilon}.
		\]
        Since $(a_n)_{n\in\N}$ is non-increasing, it is easy to see that $(m_n)_{n\in\N}$ is a strictly increasing sequence.		By combining this with \eqref{eq:betaintup}, we obtain the fact that $a\in \ell_{1+\epsilon}$.
	\end{proof}

If $\mu$ from Lemma \ref{lem:measure} is replaced with Lebesgue measure $\lambda$, one obtains just the condition $a\in \ell_2$. The construction of the strange measure $\mu$ is needed for this stronger statement. 
	
It seems plausible that if $a\in \ell_1$, that $\mathbb{M}^{a}$ embeds into $\ell_2$. We leave this question, and the question about sharpness of the above theorem for future study. It seems interesting to also explore a similar problem for the Laakso diamond graph, where the relative sizes of the diamonds is decreased. These would be obtained by varying the construction in \cite{lang2001bilipschitz}.

	\section{Qualitative nonembeddability of general slit carpets}\label{genSlitCarpetSect}
	\noindent We now return to the qualitative embeddability results and generalize the main result of \cite{seb2020regular} in two directions: we consider more general carpets, where slits need not be placed in the center of squares, and more general RNP Banach space targets. 

A slit is a set of the form
\[ 
S := \{x_S\} \times (a_S,b_S), \qquad 0<a_S<b_S<1,\]
 and we set $\ell(S):=b_S-a_S$. Consider a countable collection of non-overlapping slits $\mathcal{S}$ in the unit square $[0,1]^2$. Consider the completion $\mathbf{Z}$ of the set
\[
Z = [0,1]^2 \setminus \bigcup_{S\in \mathcal{S}} S,
\]
equipped with the path metric $d_Z$. (Technically, this is well-defined only if $d_Z$ is finite for every pair of points --- which will follow for generalized slit carpets).  We call $\mathbf{Z}$
 a \textit{general vertical slit carpet} if there exist constants $C_0,\delta>0$ such that:
\begin{enumerate}
	\item 	(Slits at every scale and location.) For every $z\in[0,1]^2$ and every $r\in(0,1)$ there is a slit $S \subset B(z,r)$  and $\delta r \le \ell(S_\alpha) \le C_0 r.$ 
    \item (Slits decay in size) For every $\epsilon>0$ $\{S\in \mathcal{S} : \ell(S)>\epsilon\}$ is finite.
	\item (Horizontal corridor (h–condition).)
	There exists a constant $\eta \in (0,1)$ such that for every slit $S\in\mathcal{S}$
	the two horizontal segments
	\[
	[x_\alpha - \eta \ell_\alpha,\ x_\alpha + \eta \ell_\alpha] \times \{a_\alpha\}
	\quad\text{and}\quad
	[x_\alpha - \eta \ell_\alpha,\ x_\alpha + \eta \ell_\alpha] \times \{b_\alpha\}
	\]
	are contained in $Z$.
	
\end{enumerate}

In particular, the $h$-condition implies that for any
	\[
	x_1, x_2 \in [x_\alpha - \eta \ell_\alpha,\ x_\alpha + \eta \ell_\alpha],
	\]
	the horizontal segments
	\[
	[x_1,x_2] \times \{a_\alpha\}
	\quad\text{and}\quad
	[x_1,x_2] \times \{b_\alpha\}
	\]
	lie entirely in $S$, and
	\[
	d_Z\big((x_1,a_\alpha),(x_2,a_\alpha)\big) = |x_1 - x_2|,
	\qquad
	d_Z\big((x_1,b_\alpha),(x_2,b_\alpha)\big) = |x_1 - x_2|.
	\]
    Since no slit intersects the top or the bottom of the square $[0,1]^2$, it is easy to see by using the $h$-condition that the distance in $S$ between pairs of points is always finite, and thus $\mathbf{Z}$ is a true metric space. Most of our analysis below will be done in the subset $Z$ contained isometrically in $\mathbf{Z}$.

	We are considering bi-Lipschitz mappings from the general slit carpet domains to RNP targets, which are defined as follows. See \cite[Chapter 2]{pisier} for further background.
	\begin{definition}\label{def:rnp}
		A Banach space $X$ satisfies the RNP property if every Lipschitz map $\gamma:[0,1]\to X$ is differentiable almost everywhere.
	\end{definition}
    We will also use the fact that the derivative $\gamma'(t)$ thus obtained is Bochner measurable and integrable; see \cite{diestel1977vector} for some details on these notions.

	We will use the notion of \emph{nicely shrinking sets} from \cite[Subsection 7.9]{walter1987real}.
	
	\begin{definition}\label{def:niceShrink}
		Given a point $x\in\mathbb{R}^n$, we say that a sequence of sets $\{E_i\}$ shrinks nicely to $x$ if there exists a constant $c>0$ and a sequence of balls $\{B(x,r_i)\}$ with $r_i\to 0$ such that
		\[
		\lambda(E_i)\geq c\,\lambda(B(x,r_i))\qquad\text{and}\qquad E_i\subset B(x,r_i)
		\]
		for every $i\in \mathbb{N}$.
	\end{definition}

    Before we prove Theorem \ref{thm:RNPnonemb}, we introduce some notation and auxiliary tools. 	Fix $\varepsilon\in(0,\eta/2)$. For each slit $S\in \mathcal{S}$ define the left and right vertical neighborhoods
	\[
	S_{-}^\varepsilon
	:= [x_S-\varepsilon \ell_S,\ x_S]\times[a_S,b_S],
	\qquad
	S_{+}^\varepsilon
	:= [x_S,\ x_S+\varepsilon \ell_S]\times[a_S,b_S].
	\]
	
	By the ``slits at every scale and location'' condition in the definition of a general vertical slit carpet, for every point $p\in Z$ there exists a sequence of slits $S_{n} \in \mathcal{S}$ with $\delta 2^{-n}\leq \ell(S_n)\leq C_0 2^{-n}$ such that
	\[
	S_n\subset B\big(p,2^{-n}\big), n\in \N.
	\]
	Along a suitable subsequence (still indexed by $n$), the families
	\[
	\{S_{n,-}^\varepsilon\}_n,
	\qquad
	\{S_{n,+}^\varepsilon\}_n
	\]
	shrink nicely to $p$ in the sense of Definition~\ref{def:niceShrink}. 
    
	Our main tool in proving Theorem \ref{thm:RNPnonemb} is the use of the vertical ($y$-) derivative to obtain rigidity for Lipschitz embeddings $f$. Let $f:Z\to X$ be a $d_Z$-Lipschitz map. The conditions on the generalized slit carpet imply that $f$ is continuous at a.e. point of $Z$ with respect to the Euclidean topology restricted to $Z$, and thus $f$ is measurable with respect to the Lebesgue measure $\lambda$ on $[0,1]^2$.     For each $x\in[0,1]$, consider the vertical parametrization
	\[
	\gamma_x(y) = (x,y),\qquad y\in[0,1].
	\]
	For almost every $x\in[0,1]$, the vertical fiber $\{x\}\times[0,1]$ meets no slit, so $\gamma_x$ defines a unique segment in $Z$. For such $x$, we define
	\[
	\partial_y f(x,y) = (f\circ\gamma_x)'(y)
	\]
	on those $y$ for which the (metric) derivative exists; this holds for almost every $y\in[0,1]$ by the RNP property of $X$. The resulting function $\partial_y f$ is Bochner measurable, see e.g.\ \cite{diestel1977vector}. On the remaining null set we set $\partial_y f=0$.
	
	The guiding intuition is that the $y$-derivative must become asymptotically constant. Consequently, the curves $\gamma_x$ are mapped to nearly parallel line segments. This, in turn, forces points on opposite sides of a slit to be mapped close to one another, leading to a contradiction. We make this precise using the following lemma.
	
	For each slit $S\in \mathcal{S}$ set
	\[
	y_S:=\frac{a_S+b_S}{2},
	\qquad
	m_{-}^{S,t} := \big(x_S-\tfrac{t}{2}\ell_S,\,y_S\big),\quad
	m_{+}^{S,t} := \big(x_S+\tfrac{t}{2}\ell_S,\,y_S\big).
	\] 
    Let $I_S$ be the set of $t$ s.t. $m_\pm^t\in C$ and $\gamma_{x_S-\tfrac{t}{2}\ell_S} \subset Z$.
	
	\begin{lemma}\label{lem:midpoints} Assume that $X$ is an RNP Banach space.
		Let $f:Z\to X$ be $1$-Lipschitz with respect to $d_Z$. Fix $\varepsilon\in (0,\eta/2)$. Then for every slit $S\in \mathcal{S}$ and every $t\in I_S$
		\[
		  \|f(m_-^{S,t})-f(m_+^{S,t})\|
		\leq\left\|
		\int_{a_S}^{y_S} \partial_y f(x_S-t, s)\, d\lambda_s
		-
		\int_{a_S}^{y_S} \partial_y f(x_S+t,s) d\lambda_s
		\right\|
		+ 2\,\varepsilon\, \ell_{S}.
		\]
	\end{lemma}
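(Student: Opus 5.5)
The plan is to compare $f(m_-^{S,t})$ and $f(m_+^{S,t})$ by routing both points down to the bottom corridor of the slit. There the $h$-condition gives a short horizontal connection, and the vertical travel is expressed through the fundamental theorem of calculus for the Lipschitz curves $f\circ\gamma_x$. (Notational remark: the statement places the points at horizontal offset $\tfrac t2\ell_S$ but writes the integrands at $x_S\mp t$. I read both as the same vertical lines $x_\pm := x_S\pm\tfrac t2\ell_S$ through $m_\pm^{S,t}$. I also assume, as the definition of $I_S$ intends, that $t\le 2\varepsilon$ and that both fibres $\gamma_{x_\pm}$ lie in $Z$ and are slit-free.)

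First, for such $t$ the vertical segments $\{x_\pm\}\times[a_S,y_S]$ lie in $Z$. Since $\varepsilon<\eta/2$, we have $|x_\pm - x_S|\le \varepsilon\ell_S<\eta\ell_S$, so the endpoints $p_\pm:=(x_\pm,a_S)$ lie on the horizontal corridor $[x_S-\eta\ell_S,x_S+\eta\ell_S]\times\{a_S\}\subset Z$ furnished by the $h$-condition. Hence
\[
d_Z(p_-,p_+)=|x_+-x_-|= t\ell_S\le 2\varepsilon\ell_S ,
\]
and since $f$ is $1$-Lipschitz, $\|f(p_-)-f(p_+)\|\le 2\varepsilon\ell_S$.

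Next, $f\circ\gamma_{x_\pm}$ is $1$-Lipschitz on $[a_S,y_S]$, because $d_Z$ restricted to a vertical segment in $Z$ is at most the Euclidean length. The RNP property then gives differentiability almost everywhere, with a Bochner integrable derivative $\partial_y f(x_\pm,\cdot)$. I would invoke the standard fact that for RNP targets a Lipschitz (hence absolutely continuous) curve is the Bochner integral of its derivative (see \cite{diestel1977vector}). This yields
\[
f(m_\pm^{S,t})-f(p_\pm)=\int_{a_S}^{y_S}\partial_y f(x_\pm,s)\,d\lambda_s .
\]
Subtracting the two identities and applying the triangle inequality,
\[
\|f(m_-^{S,t})-f(m_+^{S,t})\|\le \Big\|\int_{a_S}^{y_S}\partial_y f(x_-,s)\,ds-\int_{a_S}^{y_S}\partial_y f(x_+,s)\,ds\Big\| + \|f(p_-)-f(p_+)\|,
\]
and the last term is at most $2\varepsilon\ell_S$, which proves the claim.

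The main obstacle is the measure-theoretic step: identifying $f\circ\gamma_x$ with the integral of the a.e.-defined derivative $\partial_y f$. This works only for $x$ whose fibre is slit-free, which is why $I_S$ must be restricted. One must also check that the convention $\partial_yf=0$ on the null set does not affect the integrals, which is clear since that set is null on each such fibre. Everything else is bookkeeping with the $h$-condition.
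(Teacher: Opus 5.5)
Your proof is correct and follows the paper's argument. You join the two vertical fibres through the horizontal corridor at height $a_S$, which costs at most $2\varepsilon\ell_S$ by the $h$-condition since $\varepsilon<\eta/2$. You then write each vertical increment by the fundamental theorem of calculus for the Lipschitz curves $f\circ\gamma_{x_\pm}$, and finish with the triangle inequality. The only difference is cosmetic: the paper first reduces to $X=\mathbb{R}$ via a norming functional from Hahn--Banach, whereas you use the vector-valued Bochner fundamental theorem of calculus for RNP targets directly, which is itself proved by that same functional argument. Your reading of the offset as $\tfrac t2\ell_S$ with $t\le 2\varepsilon$ agrees with the paper's $x_S\pm t$, $t\le\varepsilon\ell_S$ up to reparametrization.
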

	
	\begin{proof} Let $t\in I_S$ has full measure for every $S\in \mathcal{S}$. 
		First, it is sufficient to assume that $X=\R$, since by the Hahn--Banach theorem we may choose $\lambda\in X^*$ with $\|\lambda\|_{X^*}=1$ and
		\[
		\lambda(f(m_-^{S,t})-f(m_+^{S,t}))=\|f(m_-^{S,t})-f(m_+^{S,t})\|_X.
		\]
		With these choices, we have
		\[
		|\lambda(f(m_-^{S,t})-f(m_+^{S,t}))|
		=\|f(m_-^{S,t})-f(m_+^{S,t})\|_X,
		\]
        and $\lambda \circ \partial_y f = \partial_y (\lambda \circ f)$. Thus proving the lemma for the $L$-Lipschitz function $g=\lambda\circ f$ in place of $f$ yields the claim for $f$.
		
		Fix a slit $S$ and write $a_S,b_S,x_S,\ell_S$ as above. Consider the two vertical curves
		\[
		\gamma^t_{\pm,S}(s)=(x_S\pm t,s),\qquad s\in [a_S,b_S].
		\]
		   By the horizontal corridor condition at the endpoint $a_S$, for all $t\in[0,\varepsilon\ell_S]$ the horizontal segment
		\[
		[x_S-t,\,x_S+t]\times\{a_S\}
		\]
		lies in $Z$, and hence
		\[
		d_Z\big(\gamma_{+,S}^t(a_S),\gamma_{-,S}^t(a_S)\big)
		=2t\le 2\varepsilon\ell_S.
		\]
		Moreover, by the definition of $m_\pm^{S,t}=\gamma^t_{+,S}(y_S)$ for a.e. $t\in I_S$.
		
		Thus,
		\begin{equation}\label{eq:bottomdistance-general}
			\|f(\gamma_{+,S}^t(a_S))-f(\gamma_{-,S}^t(a_S))\|
			\leq 2\varepsilon \ell_S,
		\end{equation}
		since $f$ is $1$-Lipschitz.
		
		Next, we can compute
		\begin{equation}\label{eq:midpoint-general}
			f(\gamma_{\pm,S}^t(y_S))-f(\gamma_{\pm,S}^t(a_S))
			= \int_{a_S}^{y_S} \partial_y f(\gamma_{\pm,S}^t(s))\,ds.
		\end{equation}
		Summing the previous three inequalities, we obtain for almost every $t\in[0,\varepsilon \ell_S]$ that
		\begin{align*}
			f(m_+^{S,t})-f(m_-^{S,t})
			\;\leq\; &\big(f(\gamma_{+,S}^t(y_S))-f(\gamma_{+,S}^t(a_S))\big)
			- \big(f(\gamma_{-,S}^t(y_S))-f(\gamma_{-,S}^t(a_S))\big) \\
			&+ \big|f(\gamma_{+,S}^t(a_S))-f(\gamma_{-,S}^t(a_S))\big| \\
			\leq\; &\int_{a_S}^{y_S} \partial_y f(\gamma_{+,S}^t(s))\,ds
			- \int_{a_S}^{y_S} \partial_y f(\gamma_{-,S}^t(s))\,ds
			+ 2\epsilon\,\varepsilon \ell_S,
		\end{align*}
		which yields the claim.
	\end{proof}

We are now ready to prove Theorem \ref{thm:RNPnonemb} that states that a generalized slit carpet does not bi-Lipschitz embed into any Banach space with the RNP property. The argument will be based on the $y$-derivative $\partial_y f$, and its measurability. By zooming in, and Lebesgue differentiation, this derivative becomes  nearly constant. However, this causes a problem, since slits occur at every scale and location and Lemma \ref{lem:midpoints} forces there to be a difference in derivatives at the left- and right-hand sides of slits.

	\begin{proof}[Proof of Theorem \ref{thm:RNPnonemb}]
		Let $X$ be any Banach space with the RNP property.
		For the sake of contradiction, let $f:Z\to X$ be a $(\delta,1)$-bi-Lipschitz map for some $\delta\in (0,1)$. Such an embedding is obtained by restricting and scaling a bi-Lipschitz embedding of $\mathbf{Z}$.
		
		Let $p \in Z$ be a Lebesgue point of $\partial_y f$. Since $f$ is Lipschitz, the partial derivative $\partial_y f$ is bounded and thus Bochner integrable. By \cite[Theorem II.2.9]{diestel1977vector} and \cite{Bochner1933}, we can apply the Lebesgue differentiation theorem to the vector-valued Bochner integrable function $\partial_y f$ along nicely shrinking sets. In particular, using the sequences $\{S_{n,-}^\epsilon\}_n$ and $\{S_{n,-}^\epsilon\}_n$ shrinking nicely to $p$, we obtain for every $\epsilon>0$
		\begin{align*}
			\lim_{n\to\infty}
			\vint_{S_{n,-}^\epsilon} \|\partial_y f - \partial_y f(p)\|\,d\mu
			+
			\vint_{S_{n,-}^\epsilon}\|\partial_y f- \partial_y f(p)\|\,d\mu
			=0.
		\end{align*}
		
		Let $\epsilon = \delta/4$  and choose $n$ large enough so that
		\begin{equation}\label{eq:intupperbound}
		\vint_{S_{n,-}^\epsilon} \|\partial_y f - \partial_y f(p)\|\,d\mu
			+
			\vint_{S_{n,-}^\epsilon}\|\partial_y f- \partial_y f(p)\|\,d\mu
		\leq \delta/4.
		\end{equation}
		By Lemma \ref{lem:midpoints} applied to the slit $S_{n}$ and $t\in I_{S_n}$ we get
		\begin{align*}
		\|f(m_+^{S_n,t})-f(m_-^{S_n,t})\| &\leq \left\|
		\int_{a_{S_n}}^{y_{S_n}} \partial_y f(x_{S_n}-\tfrac{t}{2}\ell_{S_n}, s)\, d\lambda_s
		-
		\int_{a_{S_n}}^{y_{S_n}} \partial_y f(x_{S_n}+\tfrac{t}{2}\ell_{S_n}\,s) d\lambda_s
		\right\|
		+ 2\,\varepsilon\, \ell_{S_n} \\
        &\leq 
		\int_{a_{S_n}}^{y_{S_n}} \|\partial_y f(x_{S_n}-\tfrac{t}{2}\ell_{S_n}, s)-\partial_y f(p)\|\, d\lambda_s
		+
		\int_{a_{S_n}}^{y_{S_n}} \|\partial_y f(x_{S_n}+\tfrac{t}{2}\ell_{S_n}\,s)-\partial_y f(p)\| d\lambda_s \\
		& \qquad \qquad + 2\,\varepsilon\, \ell_{S_n}. \\
		\end{align*}
        
		On the other hand, $d_Z(m_+^{S_n,t},m_-^{S_n,t})\geq \ell_{S_n}$ since any path  must go either over or under the slit. Combining this with an integration over $t\in I_S$ (which has full measure in $[0,\epsilon \ell_S]$) yields
        \begin{align*}
		\epsilon \delta \ell_{S_n}^2 &\leq 
		\int_{S_{n,-}^\epsilon} \|\partial_y f-\partial_y f(p)\|\, d\lambda
		+
		\int_{S_{n,+}^\epsilon} \|\partial_y f-\partial_y f(p)\| d\lambda \\
		& \qquad \qquad + 2\,\varepsilon^2\, \ell_{S_n}^2. \\
		\end{align*}
        Divide this by $\epsilon \ell_{S_n}^2$, and use that $\epsilon = \delta/4$ to get
        \[
        \delta \ell_{S_n}^2/2 \leq \vint_{S_{n,-}^\epsilon} \|\partial_y f-\partial_y f(p)\|\, d\lambda
		+
		\vint_{S_{n,+}^\epsilon} \|\partial_y f-\partial_y f(p)\| d\lambda,
        \]
        which is a contradiction to \eqref{eq:intupperbound}. Thus no such bi-Lipschitz embedding $f$ can exist.
	\end{proof}

		\bibliographystyle{alpha}
\bibliography{bibliography}

\end{document}